\let\mathcal\mathscr
\DeclareRobustCommand{\SkipTocEntry}[5]{}
\def\R{{\bf R}}
\def\llra{\hbox to 10mm{\rightarrowfill}}
\def\lllra{\hbox to 15mm{\rightarrowfill}}
\def\PA{{\widehat A}}
\def\PB{{\widehat B}}
\def\PK{{\widehat K}}
\def\PT{{\widehat T}}
\def\phi{{\varphi}}
\def\wf{{\widetilde f}}
\def\wX{{\widetilde X}}
\def\wZ{{\widetilde Z}}
\def\wY{{\widetilde Y}}
\def\cI{\mathcal{I}}
\def\cF{\mathcal{F}}
\def\cO{\mathcal{O}}
\def\cP{\mathcal{P}}
\def\cM{\mathcal{M}}
\def\cQ{\mathcal{Q}}
\def\cZ{\mathcal{Z}}
\let\tilde\widetilde
\DeclareMathOperator{\rank}{rank}
\DeclareMathOperator{\codim}{codim}
\DeclareMathOperator{\Pic}{Pic}
\newtheorem{lemm}{Lemma}[section]
\newtheorem{theo}[lemm]{Theorem}
\newtheorem{coro}[lemm]{Corollary}
\newtheorem{prop}[lemm]{Proposition}
\theoremstyle{definition}
\newtheorem{defi}[lemm]{Definition}
\newtheorem{rema}[lemm]{Remark}
\newtheorem{exam}[lemm]{Example}
\newtheorem{qu}[lemm]{Question}
\theoremstyle{remark}
\newtheorem*{remark*}{Remark}
\newtheorem*{note*}{Note}
\begin{document}
\title{Irregular varieites with geometric genus one, theta divisors, and fake tori}
\author{Jungkai Chen}
\address{Department
of Mathematics, National Taiwan University, Taipei 106, Taiwan, and National Center for Theoretical Sciences\\ 1 Sec. 4, Roosevelt Rd. Taipei 106, Taiwan}
\email{jkchen@ntu.edu.tw}
\author[Z.~Jiang]{Zhi Jiang}
\address{D\'epartement de Math\'ematiques d'Orsay\\UMR CNRS 8628\\Universit\'{e} Paris-Sud\\B\^{a}timent 425, F-91405 Orsay, France}
\email{Zhi.Jiang@math.u-psud.fr}

\author[Z.~Tian]{Zhiyu Tian}
\address{CNRS\\Institut Fourier, UMR 5582\\Universit\'e Grenoble Alpes CS 40700\\ 38058, Grenoble, France}
\email{zhiyu.tian@ujf-grenoble.fr}

\begin{abstract}
We study the Albanese image of a compact K\"ahler manifold whose geometric genus is one.
In particular, we prove that if the Albanese map is not surjective, then the manifold maps surjectively onto an ample divisor in some abelian variety, and in many cases the ample divisor is a theta divisor.
With a further natural assumption on the topology of the manifold, we prove that the manifold is an algebraic fiber space over a genus two curve.
Finally we apply these results to study the geometry of a compact K\"ahler manifold which has the same Hodge numbers as those of an abelian variety of the same dimension.
\end{abstract}
\maketitle
\section{Introduction }
Kawamata proved in \cite{kaw} that if $X$ is a smooth projective variety with Kodaira dimension $\kappa(X)=0$, then the Albanese morphism $a_X: X\rightarrow A_X$ is an algebraic fiber space.  An effective version of this result was obtained in \cite{J}. For instance, the author proved that if $p_g(X)=P_2(X)=1$,  $a_X$ is an algebraic fiber space.  Pareschi, Popa and Schnell recently prove the same criterion for compact K\"ahler manifolds in \cite{PPS}.

On the other hand, if we only assume that $p_g(X)=1$, $a_X$ is not necessarily surjective.
In this article  we will show that, if  $p_g(X)=1$ and $a_X$ is not surjective,
the Albanese image  is closely related to the geometry of theta divisors.
\begin{theo}\label{fibration-to-divisor}
Let $X$ be a compact K\"ahler manifold with $p_g(X)=1$. Then
\begin{itemize}
\item[(1)] $\dim a_X(X)\geq \frac{1}{2}\dim A_X$;
\item[(2)] $a_X$ is not surjective if and only if there exists an ample divisor $D$ of an abelian variety $B$ with a surjective morphism $f: X\rightarrow D$.
 \end{itemize}
\end{theo}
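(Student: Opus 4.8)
The plan is to run everything through the generic vanishing package for the sheaf $\mathcal{F}:=a_{X*}\omega_X$ on $A:=A_X$. Write $g=\dim A=q(X)$, $Z=a_X(X)$ and $d=\dim Z$; note that $\mathcal{F}$ is supported exactly on $Z$, that it is a $\mathrm{GV}$-sheaf (Green--Lazarsfeld and Hacon, in the compact Kähler case by Pareschi--Popa--Schnell), and that $h^0(A,\mathcal{F})=p_g(X)=1$. The cohomological support locus $V^0:=\{P\in\widehat A:h^0(\omega_X\otimes a_X^*P)>0\}$ contains $0$, and by the Green--Lazarsfeld--Simpson structure theorem (valid for compact Kähler $X$) it is a finite union of torsion translates of abelian subvarieties of $\widehat A$. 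All of the geometry should be read off from $\mathcal{F}$, its Fourier--Mukai transform, and the interplay between $Z$ and $V^0$.

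For the easy implication of (2): given $f:X\to D\subset B$ with $D$ ample, I would compose with $D\hookrightarrow B$ and invoke the universal property of the Albanese to factor the resulting map as $X\xrightarrow{a_X}A\xrightarrow{h}B$ with $h$ a homomorphism up to translation. Since an ample divisor generates $B$ as a group, $h$ is surjective; were $a_X$ surjective we would get $f(X)=h(a_X(X))=B$, contradicting $f(X)=D\subsetneq B$. Hence $a_X$ is not surjective.

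For (1) I would first reduce to the case of maximal Albanese dimension. Taking a smooth model $W$ of $Z$ together with the induced fibration $e:X\to W$, weak positivity of $e_*\omega_{X/W}$ (Viehweg/Fujita) gives $p_g(W)\le p_g(X)=1$; since $W\to A$ is generically finite and $Z$ generates $A$, one has $g\le q(W)$, so it suffices to prove $q(W)\le 2\dim W$, i.e. to treat $X$ of maximal Albanese dimension. There I would split on $\chi(\omega_X)$: if $\chi(\omega_X)>0$ then $V^0=\widehat A$ and, combined with $h^0=1$, this case is very rigid and should yield $g\le 2d$ from the structure of the rank-one Fourier--Mukai transform; if $\chi(\omega_X)=0$, the theorem of Ein--Lazarsfeld shows $Z$ is ruled by a positive-dimensional subtorus $K\subset A$, so $Z=\rho^{-1}(\rho(Z))$ for $\rho:A\to A/K$, and I would induct on dimension after replacing $A$ by $A/K$ and $Z$ by $\rho(Z)$. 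The delicate point is the numerical bookkeeping that makes this induction close up to the clean bound $2d\ge g$.

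The hard implication of (2) is where I expect the main obstacle to lie. Assuming $a_X$ not surjective, quotienting $A$ by the connected stabilizer of $Z$ lets me assume $Z$ has finite stabilizer and still generates $A$, while $h^0$ stays equal to $1$. I would then use the Fourier--Mukai transform of $\mathcal{F}$: the locus $V^0$ together with the condition $h^0=1$ should single out a quotient $p:A\to B$ of dimension $d+1$ for which $b:=p\circ a_X$ maps $X$ onto the divisor $D:=p(Z)$, with $\mathcal{O}_B(D)$ arising as the positive (``theta-like'') piece governing $b_*\omega_X$; the subcase $h^0(B,\mathcal{O}_B(D))=1$ is exactly when $D$ is a genuine theta divisor. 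The crux of the whole argument is to prove that $D$ is \emph{ample}, i.e. that the chosen quotient yields a divisor with finite stabilizer rather than one ruled by a subtorus, and it is precisely here that $p_g(X)=1$ --- through the rank-one transform and the structure of $V^0$ --- must enter in an essential way. A naive generic projection to dimension $d+1$ need not preserve finiteness of the stabilizer, so controlling the ampleness of $D$ is the step I expect to demand the most care.
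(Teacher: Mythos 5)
Your treatment of the easy implication of (2) matches the paper's, and your instinct to pass to a general-type reduction of the Albanese image is the right opening move; but the proposal breaks down at exactly the two places you flag, and one of your reduction steps is not valid as stated. The claim that weak positivity of $e_*\omega_{X/W}$ gives $p_g(W)\le p_g(X)$ is false for a general fibration: weak positivity does not produce an injection $\mathcal{O}_W\hookrightarrow e_*\omega_{X/W}$, and indeed $p_g$ can drop along a fibre space ($X=W\times\mathbb{P}^1$ gives $p_g(X)=0$; over a curve $C$, a fibration with $e_*\omega_{X/C}$ a non-torsion degree-zero line bundle gives $p_g(X)=p_g(C)-1$). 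So your reduction of (1) to the maximal Albanese dimension case is unjustified. The paper never passes to $\omega_W$: it works throughout with the pushforward $\mathcal{F}=f_*\omega_X$, viewed as a sheaf supported on the $\kappa$-reduction $Z\subset B$ of the Albanese image (Ueno), for which $h^0=\chi=1$ and M-regularity are automatic once $Z$ is of general type, and it uses the Pareschi--Popa--Schnell decomposition and Saito/Koll\'ar splitting to guarantee that these properties (the paper's (P1)--(P4)) survive every further pushforward needed in the induction. Your Ein--Lazarsfeld step is the correct analogue of the $\kappa$-reduction for (1), but you would need this sheaf-level bookkeeping rather than an inequality on $p_g(W)$.

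More seriously, the two statements you defer --- that the $\chi=1$, M-regular case ``should yield'' $\dim B\le 2\dim Z$, and that some quotient of $B$ carries the image of $Z$ as an \emph{ample} divisor --- are the theorem, and the paper proves both with a single inductive mechanism that is absent from your proposal. The engine is the analysis of $V^1(\mathcal{F})$, not $V^0$: Pareschi's Fourier--Mukai computation (Lemma \ref{pareschi}) shows that for an M-regular $\mathcal{F}$ with $\chi=1$ the transform of $\mathbf{R}\Delta(\mathcal{F})$ is $L\otimes\mathcal{I}_{\mathcal{Z}}$ with $\mathcal{Z}\neq\emptyset$, so $V^1(\mathcal{F})\neq\emptyset$, and any codimension-$(j{+}1)$ component of $V^1(\mathcal{F})$ is already a component of $V^j(\mathcal{F})$. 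For each component $P_i+\widehat{B_i}$ one then shows (Lemma \ref{fibration}) that the general fibre of $Z\to Z_i\subset B_i$ is a general-type divisor in the $(j{+}1)$-dimensional kernel, so the codimension of the image drops by exactly one while the ambient dimension drops by at least two; iterating through the essential components via the correspondence of Lemma \ref{correspondence} produces quotients $K_i$ in which the image of $Z$ is a codimension-one subvariety of general type, hence an ample divisor. This is precisely how the paper disposes of your worry about ampleness: the target is a $\kappa$-reduction, not a generic projection, so finiteness of the stabilizer is built in; and the same dimension count closes the bound $2\dim Z\ge\dim B$ of part (1). Without this $V^1$-analysis your proposal has no mechanism for producing either the divisor $D$ or the inequality.
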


 In a special case when $A_X$ is simple, we have:
\begin{theo}\label{image-theta}
 Let $X$ be a compact K\"ahler manifold with $p_g(X)=1$.
 Assume that $a_X$ is not surjective and  $A_X$ is simple. Then $a_X(X):=D$  is an ample divisor of $A_X$. Moreover, if $D$ is smooth in codimension $1$,  then $D$ is a theta divisor of $A_X$ and $a_X$ is a  fibration onto $D$.
\end{theo}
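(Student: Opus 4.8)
The plan is to prove the statement in three stages: first that $D:=a_X(X)$ is a prime ample divisor, then to reduce the ``theta divisor'' property to a single cohomological equality, and finally to extract that equality (and the connectedness of the fibers) from the hypothesis $p_g(X)=1$.

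\emph{Step 1: $D$ is an ample prime divisor.} By Theorem~\ref{fibration-to-divisor}(2) there exist an abelian variety $B$, an ample divisor $D'\subset B$, and a surjection $f\colon X\to D'\subset B$. The universal property of the Albanese variety factors $f$ as $f=g\circ a_X$ for a homomorphism $g\colon A_X\to B$ (up to translation). Since $A_X$ is simple, $\Ker(g)^0$ is either $A_X$ or $0$, and it cannot be $A_X$ because $f$ is non-constant; hence $g$ is an isogeny onto its image. An ample divisor is never contained in a translate of a proper abelian subvariety (otherwise its top self-intersection would vanish), so $D'\subseteq g(A_X)$ forces $g(A_X)=B$. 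Thus $g$ is an isogeny and $\dim a_X(X)=\dim D'=\dim A_X-1$, so $D$ is a prime divisor. Ampleness is then intrinsic to $A_X$: the bundle $\mathcal{O}_{A_X}(D)$ is effective and non-trivial, hence not algebraically trivial (a non-zero effective divisor on an abelian variety is never in $\Pic^0$); by simplicity the identity component of $\Ker(\phi_{\mathcal{O}(D)}\colon A_X\to\PA_X)$ must then be $0$, so $\mathcal{O}_{A_X}(D)$ is non-degenerate, and being effective of index $0$ it is ample.

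\emph{Step 2: reduction of the theta property to $h^0(\mathcal{O}_{A_X}(D))=1$.} As a Cartier divisor in the smooth variety $A_X$, $D$ is Gorenstein, and smoothness in codimension $1$ together with the $S_2$ property makes it normal; adjunction with $\omega_{A_X}\cong\mathcal{O}_{A_X}$ gives $\omega_D\cong\mathcal{O}_{A_X}(D)|_D$. Feeding the restriction sequence $0\to\mathcal{O}_{A_X}\to\mathcal{O}_{A_X}(D)\to\omega_D\to 0$ into cohomology, and using that the ample bundle $\mathcal{O}_{A_X}(D)$ has no higher cohomology on an abelian variety, one obtains $h^0(A_X,\mathcal{O}_{A_X}(D))=1+h^0(D,\omega_D)-\dim A_X$. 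Therefore $D$ defines a principal polarization, i.e.\ is (a translate of) a theta divisor, if and only if $h^0(A_X,\mathcal{O}_{A_X}(D))=1$, equivalently $\chi(\mathcal{O}_{A_X}(D))=1$.

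\emph{Step 3 (the crux): using $p_g(X)=1$.} It remains to force the polarization to be principal, which is where the numerical datum $p_g(X)=1$ must be converted into geometry. I would study $\mathcal{G}:=a_{X*}\omega_X$, a $GV$-sheaf on $A_X$ (generic vanishing of Green--Lazarsfeld--Hacon, in the compact Kähler setting of~\cite{PPS}) supported on $D$, with $h^0(A_X,\mathcal{G})=h^0(X,\omega_X)=1$. Upper-semicontinuity and the $GV$ inequality $h^0(\mathcal{G}\otimes P)\geq\chi(\mathcal{G})$ for generic $P\in\PA_X$ give $\chi(\mathcal{G})\leq 1$, and the structural (Chen--Jiang/\cite{PPS}) decomposition of $\mathcal{G}$, together with the simplicity of $A_X$ killing all proper quotient tori, should pin $\mathcal{G}$ down to a rank-one sheaf on $D$ whose Euler characteristic computes the degree of the polarization, yielding $\chi(\mathcal{O}_{A_X}(D))=1$. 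For the fibration claim I would take the Stein factorization $X\xrightarrow{h}\widetilde{D}\xrightarrow{\nu}D$: since every holomorphic $1$-form on $X$ is pulled back from $A_X$, one checks $\Alb(\widetilde{D})=A_X$ and that $\nu$ is the Albanese map of $\widetilde{D}$ onto $D$; a Hurwitz-type ramification comparison between $\widetilde{D}$ and the theta divisor $D$ sharing the same Albanese then forces $\deg\nu=1$ (this is completely transparent when $\dim A_X=2$, where $D$ and $\widetilde{D}$ are genus-two curves). The genuine obstacle is precisely this Step~3: matching $p_g(X)=1$ with the \emph{principality} of the polarization is not formal, and requires the fine structure of $GV$-sheaves and of theta divisors rather than the elementary arguments of Steps~1--2.
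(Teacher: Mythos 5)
Your Steps 1 and 2 are correct and agree in substance with the paper's reduction: simplicity of $A_X$ forces $\codim_{A_X}a_X(X)=1$ (in the paper this is Corollary \ref{inequality} with $k=1$ simple factor), and normality of $D$ (Cartier, hence $S_2$, plus smooth in codimension $1$) together with adjunction reduces the theta property to $h^0(A_X,\cO_{A_X}(D))=1$. The genuine gap is exactly where you place it, in Step 3, and the inference you sketch there does not go through as stated. On a simple abelian variety the Pareschi--Popa--Schnell decomposition of $\cG=a_{X*}\omega_X$ only says that $\cG$ is a direct sum of M-regular sheaves twisted by torsion line bundles; combined with $h^0(\cG)=\chi(\cG)=1$ this does not force $\cG$ to have rank one, and even rank one plus $\chi(\cG)=1$ would not yield $h^0(\cO_{A_X}(D))=1$, since a rank-one subsheaf of $\omega_D\otimes Q$ can have $\chi=1$ while $\chi(\omega_D)=h^0(\cO_{A_X}(D))>1$.

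What the paper actually does (Subsection 4.1, following Pareschi) is: write $\R\Phi(\R\Delta(\cF))[-g]=L\otimes\cI_V$ with $V$ zero-dimensional and $L$ a line bundle, shown to be ample with $h^0(L)=k$; pull back by the isogeny $\psi_L$ to get $0\to\widetilde{W}\to L^{\oplus k}\to\psi_L^*\cF\to 0$ with $\widetilde W$ flat; apply Grothendieck--Riemann--Roch in degrees $1$ and $2$, using the weak positivity of $\cF'\otimes\omega^{-1}$ on a desingularization (property (P5), i.e. Popa--Schnell/Viehweg) and the normality of $D$ to identify the pushed-forward canonical class, to conclude $\rank\cF=k$ and $\cO(\widetilde Z)\equiv L$; and finally compare $\chi(\omega_{\widetilde X})=k^2$ (multiplicativity under the degree-$k^2$ \'etale cover) with the bound $\chi(\omega_{\widetilde X})\le h^0(\cO(\widetilde Z)|_{\widetilde Z}\otimes Q)=k$ to force $k=1$. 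The weak positivity input, the normality identification, and the \'etale-cover Euler-characteristic count are each indispensable and none appears in your sketch. For the fibration claim, the same computation gives $\rank a_{X*}\omega_X=1$, which immediately forces the Stein factorization of $a_X$ to have degree one; no Hurwitz-type comparison is needed, and it is unclear such an argument would work when $\dim X>\dim D$.
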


One ingredient of the proof of Theorem \ref{fibration-to-divisor}  is the decomposition theorem of Pareschi-Popa-Schnell in \cite{PPS}, which establishes generic vanishing theory for Hodge modules on compact K\"ahler manifolds. By the decomposition theorem,  the ``positive" part of $a_{X*}\omega_X$ comes from algebraic varieties and this allows us to reduce the statement to the algebraic setting.

Part $(1)$ is a generalization of the main theorem of   \cite{hp}.
The ``if" part of $(2)$ is clear. If there exists a surjective morphism from $X\rightarrow D $,
the induced morphism $g: X\rightarrow D\hookrightarrow B$ factors through $a_X: X\rightarrow A_X$.
Then $a_X$ is not surjective. The ``only if" part  can  then be proved using the  idea in Pareschi's
characterization of theta divisors (see \cite{P}). In Section 3 and 4, we will see much more precise
structures of $a_X(X)$ and why it should be related to theta divisors.

With a further assumption on the second Betti cohomology, we have a very strong conclusion:
 \begin{theo}\label{fibration-to-curve}
 Let $X$ be a compact K\"ahler manifold with $p_g(X)=1$. Then the pull-back map $a_X^*: H^2(A_X, \mathbb{Q})\rightarrow H^2(X, \mathbb{Q})$
 is not injective if and only if there exists a fibration $\varphi: X\rightarrow C$ to a  smooth projective curve $C$ of genus $2$.
 \end{theo}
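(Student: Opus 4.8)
The plan is to reinterpret $a_X^\ast$ on $H^2$ as a cup product and to read off a pencil from its kernel. Since the Albanese morphism induces an isomorphism $a_X^\ast\colon H^1(A_X,\mathbb{Q})\xrightarrow{\sim}H^1(X,\mathbb{Q})$ and $H^\ast(A_X,\mathbb{Q})=\bigwedge^\ast H^1(A_X,\mathbb{Q})$, under this identification the map $a_X^\ast\colon H^2(A_X,\mathbb{Q})=\bigwedge^2 H^1(X,\mathbb{Q})\to H^2(X,\mathbb{Q})$ is exactly the cup product $\alpha\wedge\beta\mapsto \alpha\cup\beta$. So the statement becomes: this cup product has nonzero kernel $K$ (a rational sub-Hodge structure of weight two) if and only if $X$ admits a fibration onto a genus two curve.

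For the ``if'' direction, suppose $\varphi\colon X\to C$ is a fibration with $g(C)=2$. Then $\varphi^\ast\colon H^1(C,\mathbb{Q})\hookrightarrow H^1(X,\mathbb{Q})$ is injective, hence so is $\bigwedge^2\varphi^\ast$, and by naturality of the cup product it carries $\Ker\big(\bigwedge^2 H^1(C,\mathbb{Q})\to H^2(C,\mathbb{Q})\big)$ into $K$. Since $\dim\bigwedge^2 H^1(C,\mathbb{Q})=\binom{4}{2}=6>1=\dim H^2(C,\mathbb{Q})$, this kernel is nonzero, so $K\neq 0$. The same count shows any fibration onto a curve of genus $\geq 2$ forces $K\neq 0$; the content of the theorem is that $p_g(X)=1$ pins the genus to $2$.

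For the ``only if'' direction, assume $K\neq 0$. First, $a_X$ cannot be surjective: a surjective proper morphism of compact K\"ahler manifolds is injective on rational cohomology in every degree, since $a_{X\ast}\big(a_X^\ast(-)\cup[\omega_X]^{\dim X-\dim A_X}\big)$ is a positive multiple of the identity by the projection formula. Hence Theorem \ref{fibration-to-divisor}(2), together with the finer description of $a_X(X)$ from Sections 3 and 4, yields a quotient $\pi\colon A_X\to B$ and an ample divisor $D\subset B$ with a surjective $f\colon X\to D$ such that $\pi\circ a_X$ factors as $X\xrightarrow{f}D\hookrightarrow B$. Because $\pi^\ast$ and $f^\ast$ are injective and $a_X^\ast\circ\pi^\ast=f^\ast\circ\iota_D^\ast$, the correct choice of $B$ makes the restriction $\iota_D^\ast\colon H^2(B,\mathbb{Q})\to H^2(D,\mathbb{Q})$ noninjective; by the Lefschetz hyperplane theorem for the ample divisor $D$ this restriction is injective once $\dim B\geq 3$, so $\dim B=2$, $B$ is an abelian surface, and $D$ is a $1$-dimensional ample divisor. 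Stein factorization then gives a fibration $\varphi\colon X\to C$ onto a smooth curve of genus $\geq 2$ (by adjunction $2g(C)-2=D^2>0$). The main obstacle is exactly this step: converting the cohomological fact $K\neq 0$ into a genuine pencil, i.e. ruling out (or geometrically accounting for) a kernel of pure Hodge type $(1,1)$ and locating the abelian-surface quotient on which $K$ lives — this is where the structural analysis of $a_X(X)$ and its link to theta divisors enters. An alternative is to produce a nonzero holomorphic two-form in $K^{2,0}$ and invoke the generalized Castelnuovo--de Franchis theorem to build $\varphi$ directly.

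It remains to show $p_g(X)=1$ forces $g(C)=2$. Writing $\omega_X=\omega_{X/C}\otimes\varphi^\ast\omega_C$ and applying the projection formula, $\varphi_\ast\omega_X=E\otimes\omega_C$ with $E:=\varphi_\ast\omega_{X/C}$ of rank $r=p_g(F)\geq 1$ ($F$ a general fiber; $r\geq 1$ as otherwise $p_g(X)=0$). By the semipositivity theorem of Fujita, Kawamata and Koll\'ar, valid also in the compact K\"ahler setting, $E$ is nef, so $\deg E\geq 0$, and Riemann--Roch with Serre duality on $C$ gives
\[
1=p_g(X)=h^0(C,E\otimes\omega_C)\geq \chi(C,E\otimes\omega_C)=\deg E+r\,(g(C)-1)\geq g(C)-1 .
\]
Hence $g(C)\leq 2$, and since $g(C)\geq 2$ we conclude $g(C)=2$, which completes the argument.
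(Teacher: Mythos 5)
Your ``if'' direction and your final step (pinning $g(C)=2$ from $p_g(X)=1$ via semipositivity of $\varphi_*\omega_{X/C}$ and Riemann--Roch) are correct; the latter is a clean variant of what the paper does in Lemma \ref{curve}. The problem is the middle of the ``only if'' direction, and you have in fact flagged it yourself as ``the main obstacle'': you assert that ``the correct choice of $B$ makes the restriction $\iota_D^*\colon H^2(B,\mathbb{Q})\to H^2(D,\mathbb{Q})$ noninjective,'' but give no argument. This is not a routine reduction. The kernel $K\subset \Lambda^2H^1(X,\mathbb{Q})$ is a priori just some sub-Hodge structure; there is no reason it should contain a nonzero class pulled back from the particular abelian-variety quotient carrying the divisor $D$ of Theorem \ref{fibration-to-divisor}(2), nor that such a class should die on $D$. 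In the paper's structural picture the $\kappa$-reduction $Z\subset B$ of the Albanese image has codimension $s$, and there are $s$ quotients $K_1,\dots,K_s$ of $B$, each receiving $Z$ onto an ample divisor $D_i\subset K_i$; one must show that noninjectivity of $H^2(B,\mathbb{Q})\to H^2(Z,\mathbb{Q})$ forces noninjectivity of $H^2(K_j,\mathbb{Q})\to H^2(D_j,\mathbb{Q})$ for \emph{some} $j$, whence $\dim K_j=2$ by Lefschetz. This is exactly Theorem \ref{subvariety1}, whose proof is the technical heart of the argument: an induction on $\codim_BZ$ using two essential components $\widehat{T}_1,\widehat{T}_2$ of $V^1(\cF)$ with $\PB_1+\PB_2=\PB$, a decomposition of $H^2(B,\mathbb{Q})$ adapted to the resulting isogeny $B\to B_1^\flat\times_K B_2^\flat$, and a term-by-term annihilation of a putative kernel class (pushing forward against powers of an ample class to kill the $W_i$-components, and restricting to a general fiber of $Z\to Z_3$ to kill the $H^1(K_1',\mathbb{Q})\wedge H^1(K_2',\mathbb{Q})$-component). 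None of this is supplied or replaceable by the Lefschetz argument alone.

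Your proposed fallback via the generalized Castelnuovo--de Franchis theorem does not close the gap either: that theorem requires a nonzero \emph{decomposable} element $\omega_1\wedge\omega_2$ of $H^0(X,\Omega_X^2)$ killed by cup product (equivalently a $2$-dimensional isotropic subspace of $H^0(X,\Omega_X^1)$), whereas the $(2,0)$-part of $K$ could a priori be zero or consist only of indecomposable classes; ruling this out is again precisely the content of the missing step. Also, a smaller point: you use injectivity of $f^*\colon H^2(D,\mathbb{Q})\to H^2(X,\mathbb{Q})$ for the surjection $f\colon X\to D$ with $D$ singular; since $H^2(D,\mathbb{Q})$ carries a nontrivial weight filtration, only $\mathrm{Gr}^W_2H^2(D,\mathbb{Q})\to H^2(X,\mathbb{Q})$ is injective, which is how the paper phrases the corresponding reduction from $A_X$ to $B$ in its proof of Theorem \ref{fibration-to-curve}. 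As written, the proposal is an outline with the central implication unproved.
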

 The ``if" part is again clear. The fibration $f$ induces a fibration $A_X\rightarrow JC$. Since $C$ is a curve, $H^2(JC, \mathbb{Q})\rightarrow H^2(C, \mathbb{Q})$ is not injective. Hence $H^2(A_X, \mathbb{Q})\rightarrow H^2(X, \mathbb{Q})$ is also not injective.

A more careful analysis shows the following.

\begin{coro}\label{cor:deRham}
 Let $X$ be a compact K\"ahler manifold with $p_g(X)=1$.
Then the de Rham fundamental group $\pi_1(X) \otimes \mathbb{Q}$ of $X$ is isomorphic to a product of $\mathbb{Q}^{2r} \times (\pi_1(C)\otimes \mathbb{Q})^i$,
where $C$  is a smooth curve of  genus $2$ and $i=\frac{1}{5}\dim (\text{Ker}(H^2(A_X, \mathbb{Q})=\Lambda^2 H^1(X, \mathbb{Q})\rightarrow H^2(X, \mathbb{Q}))), 2r+2i=b_1(X)$ and $i$ is less or equal to the codimension of the the Albanese image of $X$.
\end{coro}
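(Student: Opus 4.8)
The plan is to reduce the statement to the cup-product map on $H^1$, use the formality of compact K\"ahler manifolds, and split the resulting data into curve pieces by means of Theorem~\ref{fibration-to-curve} and the Castelnuovo--de Franchis theory. By Deligne--Griffiths--Morgan--Sullivan, $X$ is $1$-formal, so the Malcev (pro-unipotent) completion $\pi_1(X)\otimes\mathbb{Q}$ is the degree completion of the holonomy Lie algebra $\mathbb{L}(H_1(X,\mathbb{Q}))/(R)$, whose quadratic relations $R$ are the image of the transpose of the cup product $\mu\colon\Lambda^2 H^1(X,\mathbb{Q})\to H^2(X,\mathbb{Q})$. Thus $\pi_1(X)\otimes\mathbb{Q}$ is determined by $H_1$ together with $K:=\Ker\mu$. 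Via the Albanese one has $H^1(X,\mathbb{Q})\cong H^1(A_X,\mathbb{Q})$ and $\Lambda^2 H^1(X,\mathbb{Q})\cong H^2(A_X,\mathbb{Q})$, and $\mu$ is identified with $a_X^*$; hence $K=\Ker(a_X^*)$ is exactly the space appearing in the statement.

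Next I would study $K$ as a sub-Hodge structure of $\Lambda^2 H^1$. By Castelnuovo--de Franchis and Catanese's refinement, the maximal isotropic subspaces of $(H^1,\mu)$ of dimension $\ge 2$ arise from the irrational pencils $X\to C$, and Theorem~\ref{fibration-to-curve} forces every such $C$ to have genus exactly $2$ under $p_g(X)=1$. The crux is to prove that $K$ splits as an orthogonal direct sum $K=\bigoplus_{j=1}^{i}K_j$, with $K_j=\Ker(\Lambda^2 H^1(C_j)\to H^2(C_j))$ pulled back from a genus-two fibration $\varphi_j\colon X\to C_j$. Since $\dim K_j=\binom{4}{2}-1=5$ for a genus-two curve, this identifies $i=\tfrac15\dim K$ with the number of independent genus-two pencils.

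This compatible orthogonal decomposition of $(H_1,\mu)$ transports, through the functorial $1$-minimal model, into a direct product of pro-unipotent groups: each $K_j$ reproduces the degree-two part of the Malcev Lie algebra of the genus-two surface group $\pi_1(C_j)$ (itself formal, with a single quadratic relation), while the part of $H^1$ not spanned by the pencils, on which $\mu$ is injective, contributes a free abelian factor $\mathbb{Q}^{2r}$. This yields $\pi_1(X)\otimes\mathbb{Q}\cong\mathbb{Q}^{2r}\times(\pi_1(C)\otimes\mathbb{Q})^i$. Comparing first Betti numbers then determines $r$ in terms of $i$, and the inequality $i\le\codim a_X(X)$ follows since each independent genus-two pencil makes the Albanese image drop in codimension---its image in the corresponding two-dimensional Jacobian being a curve---exactly as in the proofs of Theorems~\ref{fibration-to-divisor} and~\ref{fibration-to-curve}.

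The main obstacle is the orthogonal splitting of $K$ in the second step: one must show that distinct genus-two pencils contribute independent summands with no cross terms, and that $K$ contains no indecomposable classes beyond those explained by the pencils. This demands controlling how several irrational pencils can interact (common subpencils, base loci) and exploiting the rigidity forced by $p_g(X)=1$ to exclude any extra relations in the Malcev Lie algebra.
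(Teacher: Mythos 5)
You have correctly identified where the difficulty lies, but the step you flag as ``the main obstacle'' --- the orthogonal splitting $K=\bigoplus_{j}K_j$ into $5$-dimensional blocks coming from independent genus-two pencils, with no cross terms and nothing left over --- is not a technical detail to be checked: it is the entire mathematical content of the corollary, and neither formality nor Castelnuovo--de Franchis/Catanese theory supplies it. Catanese's theory produces a pencil from each maximal isotropic subspace of dimension $\geq 2$ in $H^{1,0}$, but it does not tell you that every class in $\Ker\bigl(\Lambda^2H^1(X,\mathbb{Q})\to H^2(X,\mathbb{Q})\bigr)$ (including its $(1,1)$-part, which is invisible to Castelnuovo--de Franchis) is accounted for by such pencils, nor that distinct pencils contribute transverse summands, nor that the dimension of $K$ is divisible by $5$. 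In the paper these facts are exactly Theorem \ref{split}, and they are proved not by topological or Lie-theoretic means but by the generic vanishing machinery of Sections 3--5: the essential components of $V^1(\cF)$ from Theorem \ref{subvariety}, Lemma \ref{dim=2} forcing $\dim K_j=2$ whenever $H^2(K_j,\mathbb{Q})\to H^2(D_j,\mathbb{Q})$ fails to inject, and an induction that computes $\Ker\bigl(H^2(B,\mathbb{Q})\to H^2(Z,\mathbb{Q})\bigr)$ as a direct sum of a $5$-dimensional piece and the corresponding kernel for $Z_j\hookrightarrow B_j$. As written, your argument assumes its conclusion at the decisive point.

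Your overall framework is also genuinely different from the paper's, and it is worth noting what each buys. The paper never invokes the $1$-minimal model or the holonomy Lie algebra presentation; instead it uses the observation that a morphism inducing an isomorphism on $H^1(\cdot,\mathbb{Q})$ and an injection on $H^2(\cdot,\mathbb{Q})$ induces an isomorphism on de Rham fundamental groups, reduces to a resolution of the Albanese image, and then reads the product decomposition $\pi_1(X)\otimes\mathbb{Q}\simeq(\pi_1(C)\otimes\mathbb{Q})^{i}\times(\pi_1(B')\otimes\mathbb{Q})$ directly off the geometric product structure of $\overline{Z}$ furnished by Theorem \ref{split} (an abelian \'etale cover of $C_1\times\cdots\times C_i\times\overline{Z'}$ with $H^2(B',\mathbb{Q})\to H^2(Z',\mathbb{Q})$ injective). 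Your formality route would in fact go through if you imported Theorem \ref{split} to supply the decomposition of $K$ --- the inductive computation of $W$ in its proof is precisely the statement that $K$ splits off a $5$-dimensional summand per curve --- but then the Lie-algebra formalism becomes an unnecessary detour compared with the paper's direct geometric argument. Note also that the bound $i\leq\codim_{A_X}a_X(X)$ does not follow formally from ``each pencil drops the codimension'': in the paper it comes from the fact that the curves $C_j$ arise from $s=\codim_B Z$ essential components of $V^1(\cF)$, so at most $s$ of them can occur.
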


The de Rham fundamental group is the $\mathbb{Q}$-unipotent completion of the topological fundamental group (see \cite{KahlerGroup}). One can construct examples where $i$ takes all possible values $0, 1, \ldots, s=\codim_{A_X} a_X(X)$.

The  motivation  to study the Albanese image of irregular varieties with geometric genus one comes from an explicit geometry question.

Catanese showed that a  compact K\"ahler manifold, whose integral cohomology ring is isomorphic to that of a torus, is actually a complex torus.

In \cite{DJL}, the authors study projective varieties $X$ with mild singularities, whose rational cohomology rings are isomorphic to those of complex tori.  These varieties are called rational cohomology tori.  The Albanese morphism of a rational cohomology torus is  finite  and  is often an abelian cover of the Albanese variety.

It is then natural to ask  what can we say about the general structure of $X$ if we further loosen the condition on cohomology rings.
The  condition that $\dim H^i(X, \mathbb{Q})=\dim H^i(A_X, \mathbb{Q})$  is  too weak to say anything interesting. Indeed, by blowing-up subvarieties on $\mathbb{P}^m$-bundles over curves, we can construct many varieties verifying this condition.

 On the other hand, Betti cohomology of smooth projective varieties carries Hodge structures, which  usually inherit information about the complex strucutre of $X$.
 John Ottem  asked:
\begin{qu}
Let $X$ be a compact K\"ahler manifold. Assume that $h^{p, q}(X)=h^{p,q}(A_X)$ for all $p$ and $q$. Is $X$ a rational cohomology torus ?
\end{qu}

Note that the above question is equivalent to ask whether the Albanese morphism $a_X$ is generically finite under the assumption of Hodge numbers. If so,  the pull-back $a_X^*: H^*(A_X, \mathbb{Q})\rightarrow H^*(X, \mathbb{Q})$ would be an isomorphism and hence $X$ is a rational cohomology torus.

The answer to  Ottem's question is negative. A counter-example is described in \cite[Example 1.7]{DJL},
which is an elliptic curve fibration over a genus $2$ curve. We will see that,  despite the counterexamples, there are
strong  restrictions on  the structure of $a_X: X\rightarrow A_X$.

 \begin{defi}
Let $X$ be a compact K\"ahler manifold. We say that $X$ is a fake torus if the Hodge numbers of $X$ are the same as those of a complex torus of the same dimension and the Albanese morphism $a_X$ is not generically finite.
\end{defi}

 The following result is a direct application of Theorem \ref{fibration-to-curve}.

 \begin{coro}
Let $X$ be a fake torus. There exists a fibration $f: X\rightarrow C$ to a smooth projective curve $C$ of genus $2$.
In particular, the fundamental group of a fake torus is non-abelian.
\end{coro}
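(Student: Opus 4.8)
The plan is to deduce this Corollary from Theorem \ref{fibration-to-curve}, so the whole task reduces to checking its cohomological hypothesis. Write $n=\dim X$. Since the Hodge numbers of a fake torus $X$ coincide with those of an $n$-dimensional torus, we have $p_g(X)=h^{n,0}(X)=1$ and $q(X)=h^{1,0}(X)=n$, hence $\dim A_X=n=\dim X$; moreover $b_2(X)=b_2(A_X)$. Because $a_X$ is not generically finite and its source and target have equal dimension, it cannot be surjective, so $Z:=a_X(X)$ has $\dim_{\mathbb{C}}Z<n$. Factoring $a_X$ as $X\to Z\hookrightarrow A_X$ and using $H^{2n}(Z,\mathbb{Q})=0$, I first record that $a_X^*$ is the zero map on $H^{2n}(A_X,\mathbb{Q})\cong\mathbb{Q}$. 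Granting that $a_X^*\colon H^2(A_X,\mathbb{Q})\to H^2(X,\mathbb{Q})$ fails to be injective, Theorem \ref{fibration-to-curve} (applicable since $p_g(X)=1$) immediately produces the desired fibration $\varphi\colon X\to C$ onto a smooth projective genus two curve.

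The hard part will be proving this non-injectivity on $H^2$, and I expect the naive attempt through holomorphic forms to fail: although the top holomorphic form of $A_X$ pulls back to zero (as $\dim_{\mathbb{C}}Z<n$), this does not by itself obstruct injectivity in degree two. Instead I would argue by contradiction using Poincar\'e duality together with a K\"ahler class. Suppose $a_X^*$ were injective on $H^2(A_X,\mathbb{Q})$; since $b_2(X)=b_2(A_X)$ it would be an isomorphism, so $a_X^*H^2(A_X,\mathbb{R})=H^2(X,\mathbb{R})$. For $\alpha\in H^2(A_X,\mathbb{Q})$ and $\beta\in H^{2n-2}(A_X,\mathbb{Q})$ one has $a_X^*\alpha\cup a_X^*\beta=a_X^*(\alpha\cup\beta)=0$ in $H^{2n}(X,\mathbb{Q})$, because $\alpha\cup\beta$ lies in $H^{2n}(A_X,\mathbb{Q})$ where $a_X^*$ vanishes. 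Thus $a_X^*H^{2n-2}(A_X,\mathbb{Q})$ is Poincar\'e-orthogonal to $a_X^*H^2(A_X,\mathbb{Q})=H^2(X,\mathbb{Q})$, and perfectness of Poincar\'e duality on the compact K\"ahler manifold $X$ forces $a_X^*H^{2n-2}(A_X,\mathbb{Q})=0$. Now the cohomology ring of the torus $A_X$ is generated in degree one, so $H^{2n-2}(A_X,\mathbb{Q})$ is spanned by $(n-1)$-fold products of classes in $H^2(A_X,\mathbb{Q})$; applying the ring homomorphism $a_X^*$ shows that every $(n-1)$-fold cup product of classes in $a_X^*H^2(A_X,\mathbb{R})=H^2(X,\mathbb{R})$ vanishes. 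This contradicts the fact that a K\"ahler class $\omega\in H^2(X,\mathbb{R})$ satisfies $\omega^{n-1}\neq 0$ (since $\int_X\omega^n>0$). Hence $a_X^*$ is not injective on $H^2$.

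Finally, for the non-abelianity of the fundamental group, I would use that $\varphi\colon X\to C$ is a fibration with connected fibres, so the homotopy exact sequence yields a surjection $\pi_1(X)\twoheadrightarrow\pi_1(C)$. Since $C$ has genus two, $\pi_1(C)$ is a non-abelian surface group, and a group admitting a surjection onto a non-abelian group is itself non-abelian; therefore $\pi_1(X)$ is non-abelian. The only genuinely delicate point in the whole argument is the degree-two non-injectivity, for which the combination of the vanishing of $a_X^*$ in top degree, Poincar\'e duality, and the positivity of the K\"ahler class is the essential mechanism.
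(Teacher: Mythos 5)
Your proof is correct and follows essentially the same route as the paper: both reduce to Theorem \ref{fibration-to-curve} by showing $a_X^*$ cannot be injective on $H^2$, using that $b_2(X)=b_2(A_X)$ together with the incompatibility between K\"ahler positivity and the vanishing of $a_X^*$ in top degree (forced by $\dim a_X(X)<\dim X$). The paper's version is just the more direct form of your argument --- injectivity would make some K\"ahler class $\omega=a_X^*\alpha$, whence $\omega^n=a_X^*(\alpha^n)=0$ --- whereas you route the same positivity obstruction through Poincar\'e duality and $(n-1)$-fold products in $H^{2n-2}$; both are valid.
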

\begin{proof}
By definition of fake torus, $p_g(X)=1$ and $a_X$ is not surjective. Hence  any K\"ahler class on $X$ does not come from $A_X$. Moreover, since $\dim H^2(A_X, \mathbb{Q})=\dim H^2(X, \mathbb{Q})$, the pull-back $a_X^*: H^2(A_X, \mathbb{C})\rightarrow H^2(X, \mathbb{C})$ is not injective. Then Theorem \ref{fibration-to-curve}  implies that there exists a fibration $f: X\rightarrow C$. Then we have a surjective map $\pi_1(X)\twoheadrightarrow \pi_1(C)$. Hence $\pi_1(X)$ is not abelian.
\end{proof}

We have a good understanding on the structure of $a_X(X)$ for a fake torus  $X$, thanks to the theory of generic vanishing.  However, the fiber of $a_X$ is poorly understood. That is the reason that we don't have a picture of the general structure of fake tori. Nevertheless, when $\dim a_X(X)=\dim X-1$, we have the following general result.
\begin{theo}Let $X$ be a fake torus of dimension $n$. If $\dim a_X(X)=n-1$, then $X$ is not of general type.
\end{theo}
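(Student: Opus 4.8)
Write $n=\dim X$. Since $X$ is a fake torus, $h^{1,0}(X)=n$, so $\dim A_X=n$ and $p_g(X)=h^{n,0}(X)=1$; the hypothesis $\dim a_X(X)=n-1$ says that $a_X$ is not surjective. After Stein factorization $a_X$ becomes a fibration of $X$ onto an $(n-1)$-dimensional base whose general fibre $F$ is a smooth curve, and the plan is to bound $\kappa(X)$ from above. By Iitaka's easy addition theorem $\kappa(X)\le\kappa(F)+(n-1)$, so if $X$ were of general type then $\kappa(F)\ge1$, i.e.\ $g(F)\ge2$. Thus everything comes down to excluding $g(F)\ge2$.

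To identify $F$ I would use the genus two fibration from the corollary above: there is a fibration $\varphi\colon X\to C$ onto a smooth curve $C$ of genus $2$, inducing a surjection $p\colon A_X\to JC$ with connected kernel $K$, an abelian variety of dimension $n-2$, and $p\circ a_X$ equals $X\to C\hookrightarrow JC$, the Abel--Jacobi embedding of $C$ as the theta divisor $\Theta$. Hence $p(a_X(X))=\Theta$, which forces $a_X(X)=p^{-1}(\Theta)$, an abelian--by--curve variety; by adjunction on $\Theta$ its canonical bundle is pulled back from $C$, so $\kappa(a_X(X))=1$. The general fibre $F$ of $a_X$ is then precisely a fibre of the induced surjection $a_G\colon G\to K$, where $G$ denotes a general fibre of $\varphi$, of dimension $n-1$. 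Since the subadditivity $C_{n,1}$ holds over a curve, combined with easy addition $\kappa(X)=\kappa(G)+\kappa(C)=\kappa(G)+1$, so it is equivalent to show $\kappa(G)\le n-2$; I will instead directly bound $g(F)$ through the family $G\to K$ over the abelian variety $K$.

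The key input is the rigidity forced by $p_g=1$. By Fujita--Kawamata semipositivity $\varphi_{*}\omega_{X/C}$ is a nef vector bundle on $C$ of rank $p_g(G)$, and $\varphi_{*}\omega_X=\varphi_{*}\omega_{X/C}\otimes\omega_C$ has $h^0=p_g(X)=1$. On the genus two curve $C$, Riemann--Roch together with the nonnegativity of the degree of a nef bundle gives $h^0(\varphi_{*}\omega_{X/C}\otimes\omega_C)\ge\rank(\varphi_{*}\omega_{X/C})$, whence $\rank=1$ and $\deg=0$; in particular $p_g(G)=1$. Feeding this into $a_G\colon G\to K$, whose general fibre is again $F$, the direct image $E_G:=a_{G*}\omega_{G/K}$ is a nef sheaf of rank $g(F)$ on $K$ with $h^0(K,E_G)=p_g(G)=1$ (using $\omega_K=\cO_K$). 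The hardest and final step is to conclude $g(F)\le1$ from this. Here I expect to combine generic vanishing on $K$ with the Viehweg--Zuo positivity of Hodge bundles: if $g(F)\ge2$ and the family $G\to K$ had nonzero variation, then a symmetric power of $E_G$ would contain an ample subsheaf and $h^0(K,E_G)$ could not equal $1$; whereas if the family were isotrivial with a fibre of genus $\ge2$, then $G$ would be, up to an \'etale cover, a product of $F$ with an abelian variety, giving $p_g(G)=g(F)>1$. Either alternative contradicts $p_g(G)=1$, so $g(F)\le1$, hence $\kappa(F)\le0$ and $\kappa(X)\le n-1$. The main obstacle is exactly this dichotomy: treating the isotrivial case cleanly, including non-trivial isotrivial quotients, and making rigorous the implication ``nonzero variation produces extra sections'' even when the ample subsheaf of some symmetric power of $E_G$ is a principal polarization carrying a single section.
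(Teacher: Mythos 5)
Your reduction is fine up to a point: easy addition applied to the Stein factorization of $a_X$ does show that if $X$ were of general type then the general (curve) fibre $F$ of $a_X$ would have $g(F)\geq 2$, and your derivation of $p_g(G)=1$ for the general fibre $G$ of $\varphi\colon X\to C$ (rank and degree of the nef bundle $\varphi_*\omega_{X/C}$ on the genus-$2$ curve) agrees with what the paper proves in Lemma 6.2, where $f_*\omega_X=\omega_C\otimes Q$. The proof collapses, however, at the final step, and both branches of your dichotomy fail. The isotrivial branch is simply false: take $F$ a bielliptic curve of genus $\geq 2$ with an involution $\sigma$ whose quotient is elliptic, let $\Gamma=\mathbb{Z}/2$ act on an abelian variety $K'$ by a translation of order $2$, and set $G=(F\times K')/\Gamma$ with the diagonal (free) action. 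Then $G$ fibres over the abelian variety $K=K'/\Gamma$ with fibre $F$ of genus $\geq 2$, yet $p_g(G)=\dim H^0(F,\omega_F)^{\Gamma}=g(F/\sigma)=1$ and $G$ even has maximal Albanese dimension. So $p_g(G)=1$ does not force $g(F)\leq 1$; the quotient can kill sections (the paper's own surface example $(D\times E)/G$ with $g(D)\geq 3$ is the same phenomenon). The variation branch is not rigorous either, as you acknowledge: an ample subsheaf of $\Sym^k E_G$ on an abelian variety is perfectly compatible with $h^0(E_G)=1$ (a principal polarization has exactly one section), so no contradiction is extracted. Note also that your argument uses only $p_g(X)=1$, $q(X)=n$ and non-surjectivity of $a_X$, never the remaining Hodge-number equalities defining a fake torus; it is therefore trying to prove a substantially stronger statement than the theorem, which should make you suspicious.

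The paper's proof takes a different and essentially orthogonal route that never bounds the genus of the $a_X$-fibre. From Lemma 6.2 it gets $f_*\omega_X=\omega_C\otimes Q$ with $Q$ torsion and nontrivial, passes to the \'etale cover $\widetilde{X}=X\times_C\widetilde{C}$ induced by $Q$, and shows $\widetilde{X}$ has maximal Albanese dimension. It then uses the full fake-torus hypothesis in two places your sketch does not: first, a $G$-equivariant computation of $H^2(\widetilde{M},\mathbb{Q})^G$ for $\widetilde{M}=\widetilde{C}\times_{J\widetilde{C}}A_{\widetilde{X}}$ shows $h^2(M,\mathbb{Q})=h^2(X,\mathbb{Q})$ (this needs $h^2(X,\mathbb{Q})=h^2(A_X,\mathbb{Q})$), whence $X\to M$ and therefore $a_{\widetilde{X}}$ is finite onto its image; second, $\chi_{\mathrm{top}}(\widetilde{X})=\chi_{\mathrm{top}}(X)=0$ (again a fake-torus fact) combined with the theorem from [DJLW] that a general-type variety with finite Albanese map cannot have vanishing Euler characteristic. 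If you want to salvage your approach you would need a genuinely new argument excluding $g(F)\geq 2$, and the example above shows that such an argument cannot work fibrewise over $C$; it must use the global topological constraints on $X$.
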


Moreover, we can also describe explicitly fake tori in low dimensions.

\subsection*{Acknowledgements}
We thank John Ottem for his question and Stefen Schreieder for discussions. The second author worked on this article during his visits at Shanghai Center for Mathematical Sciences,  National Center for Theoretical Sciences (NCTS) of Taiwan, and Yau Mathematical Sciences Center, and he is grateful for all the supports from these institutes.
\section{Notations and preliminaries}\label{notation}
\subsection{Subvarieties of general type}
A  compact K\"ahler manifold is always connected and a variety is always supposed to be reduced and irreducible.

 A subvariety of a torus is called of general type if any of its desingularization is a smooth projective variety
 of general type. Ueno (\cite[Theorem 10.9]{U}) proved that a subvariety of a complex torus is not of general type
 if and only if it is fibred by sub-torus.
 More precisely, given a subvariety $Z$ of a complex torus $B$.  Let $K$ be the  maximal subtorus of $B$ such that $K+Z=Z$ and denoted $B^\flat=Z/K$. Then there is $Z^\flat \subset B^\flat$ such that $Z^\flat$ is of general type and $Z \to Z^\flat$ is fibred by $K$. We call $Z^\flat$ (resp. $Z^\flat \subset B^\flat$) the {\it $\kappa$-reduction of $Z$} (resp. of $Z \subset B$). We call $K$ the $\kappa$-kernel of $Z$.
 Notice that if $Z$ is of general type, then clearly $Z^\flat=Z$. Hence one has $(Z^\flat)^\flat = Z^\flat$ in general.

  Let $X$ be a compact K\"ahler manifold. We denote by $Y \subset A_X$ the image of the Albanese morphism of $X$. In sequel, we will fix the following notation:

\begin{eqnarray}\label{setting}
\xymatrix{X\ar@/_2pc/[dd]_f\ar[dr]^{a_X}\ar[d]_g\\
Y\ar[d]_h\ar@{^{(}->}[r] & A_X\ar[d]^p\\
Z\ar@{^{(}->}[r]  &B,}
\end{eqnarray}
where $Z \subset B$ is the $\kappa$-reduction of $Y \subset A_X$.
 In this setting, if $Y$ is of general type, then $h$ and $p$ are respectively isomorphisms of $Y$ and $A_X$.

 For any torus $A$, we will denote by $\PA=\Pic^0(A)$ the dual abelian variety.

We adapt the following notations.  Assume that $Z\hookrightarrow A$ is a subvariety (possibly of general type) of an abelian variety and assume that $\PB$ is an abelian subvariety of $\PA$. Let $B$ be a quotient abelian variety of $A$ and $Z_B$ denote the image of $Z$ in $B$. The $\kappa$-reduction of $Z_B$, denoted $Z_B^\flat$, is called the {\it $\kappa$-reduction of $Z$ with respect to $B$}. We will need the following easy Lemma.

\begin{lemm}\label{easyprop}
Let $Z \hookrightarrow A$ be a subvariety of an abelian variety, $\PB_2\subset \PB_1\subset \PA$ be abelian subvarieties, and $Z_{B_i} \hookrightarrow B_i$ be the  $\kappa$-reduction with respect to $B_i$. Then there is induced commutative diagram with surjective vertical morphisms
\begin{eqnarray}
\xymatrix{Z_{B_1}^\flat \ar[d] \ar@{^{(}->}[r] & B_1^\flat \ar[d]\\
Z_{B_2}^\flat \ar@{^{(}->}[r]  & B_2^\flat.}
\end{eqnarray}
In particular, the torus $\widehat{B_2^\flat}$ is a subtorus of $\widehat{B_1^\flat}$.
\end{lemm}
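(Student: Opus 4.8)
The plan is to convert the given data of dual abelian subvarieties into a tower of quotient maps and then to track the $\kappa$-kernels through it. Recall that an inclusion of abelian subvarieties $\widehat B\subset\widehat A$ is dual to a surjection $A\tto B$ of abelian varieties, and that the surjection dual to such an inclusion has \emph{connected} kernel. Applying this to $\widehat{B_2}\subset\widehat{B_1}\subset\widehat A$ yields a compatible tower of surjections $A\tto B_1\stackrel{\pi}{\tto}B_2$ with connected kernels, so that $\pi\colon B_1\to B_2$ is the map dual to $\widehat{B_2}\hookrightarrow\widehat{B_1}$. By definition $Z_{B_i}$ is the image of $Z$ in $B_i$, and since the maps are compatible we have $Z_{B_2}=\pi(Z_{B_1})$. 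Writing $K_i\subset B_i$ for the $\kappa$-kernel of $Z_{B_i}$ (the maximal subtorus with $K_i+Z_{B_i}=Z_{B_i}$), I recall that $B_i^\flat=B_i/K_i$ and that $Z_{B_i}^\flat$ is the image of $Z_{B_i}$ in $B_i^\flat$.

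The key step is the inclusion $\pi(K_1)\subseteq K_2$, which I expect to be the only point carrying any content. Since $\pi$ is a homomorphism of abelian varieties, $\pi(K_1)$ is again a subtorus of $B_2$. Applying $\pi$ to the defining relation $K_1+Z_{B_1}=Z_{B_1}$ and using $Z_{B_2}=\pi(Z_{B_1})$ gives $\pi(K_1)+Z_{B_2}=Z_{B_2}$; by the maximality in the definition of $K_2$ this forces $\pi(K_1)\subseteq K_2$. Consequently $\pi$ descends to a morphism $\bar\pi\colon B_1/K_1\to B_2/K_2$, that is $\bar\pi\colon B_1^\flat\to B_2^\flat$, which is surjective because the composite $B_1\tto B_2\tto B_2^\flat$ is. Tracking the subvariety, $\bar\pi$ carries $Z_{B_1}^\flat$ (the image of $Z_{B_1}$) onto the image of $\pi(Z_{B_1})=Z_{B_2}$ in $B_2^\flat$, which is exactly $Z_{B_2}^\flat$. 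The resulting square commutes by construction and both vertical arrows are surjective, giving the asserted diagram.

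Finally, the ``in particular'' statement follows by dualizing $\bar\pi$. A surjection of abelian varieties with connected kernel dualizes to a closed immersion of the dual abelian varieties, so it remains only to check that $\ker\bar\pi$ is connected. This is where the choice of connected kernels in the first paragraph pays off: one has $\ker\bar\pi=\pi^{-1}(K_2)/K_1$, and $\pi^{-1}(K_2)$ sits in an extension $0\to\ker\pi\to\pi^{-1}(K_2)\to K_2\to 0$ with $\ker\pi$ and $K_2$ both connected, hence is connected, and dividing by the connected subtorus $K_1$ preserves connectedness. Therefore $\widehat{\bar\pi}\colon\widehat{B_2^\flat}\hookrightarrow\widehat{B_1^\flat}$ exhibits $\widehat{B_2^\flat}$ as a subtorus of $\widehat{B_1^\flat}$, completing the proof. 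The only genuine subtlety is this last connectedness bookkeeping, needed to upgrade ``isogenous onto a subtorus'' to ``is a subtorus''; everything else is formal manipulation with quotients together with the maximality property of the $\kappa$-kernel.
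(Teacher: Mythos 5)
The paper states this as an ``easy Lemma'' and gives no proof at all, so there is nothing to compare against; your argument is correct and supplies exactly the expected details. In particular, the two points that actually need saying --- that $\pi(K_1)\subseteq K_2$ by maximality of the $\kappa$-kernel, and that $\ker\bar\pi=\pi^{-1}(K_2)/K_1$ is connected so that the dual map is a closed immersion rather than merely finite onto a subtorus --- are both handled correctly.
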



\begin{lemm}\label{easy}
Let $Z\hookrightarrow B$ be a subvariety of general type. For $\PB_1$ and $\PB_2$ two abelian subvarieties of $\PB$. Let $\widehat{B_{12}}$ be the neutral component of $\PB_1\cap \PB_2$ and let $\widehat{B^1_2}$ be the neutral component of $\widehat{B_1^\flat} \cap  {\PB_2}$. Then $(B^1_2)^\flat = (B_{12})^\flat$.
\end{lemm}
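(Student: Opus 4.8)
The plan is to fit both $\kappa$-reductions into a single tower of quotients of $B$ and reduce the statement to one containment of subtori. Dualizing the inclusions $\widehat{B^1_2}\subseteq\widehat{B_{12}}\subseteq\PB$ --- the first holding because $\widehat{B_1^\flat}\subseteq\PB_1$, so that $\widehat{B^1_2}=(\widehat{B_1^\flat}\cap\PB_2)^0\subseteq(\PB_1\cap\PB_2)^0=\widehat{B_{12}}$ --- produces surjections $B\tto B_{12}$ and $\pi\colon B_{12}\tto B^1_2$ with $Z_{B^1_2}=\pi(Z_{B_{12}})$. Let $K_{12}\subseteq B_{12}$ denote the $\kappa$-kernel of $Z_{B_{12}}$. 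The elementary observation is that if $\Ker\pi\subseteq K_{12}$, then the $\kappa$-kernel of $Z_{B^1_2}$ is exactly $K_{12}/\Ker\pi$: the inclusion $\supseteq$ comes from pushing $K_{12}+Z_{B_{12}}=Z_{B_{12}}$ through $\pi$, and conversely the preimage of the $\kappa$-kernel of $Z_{B^1_2}$ is a subtorus $K''\supseteq\Ker\pi$ with $K''+Z_{B_{12}}\subseteq\pi^{-1}(Z_{B^1_2})=Z_{B_{12}}$, whence $K''\subseteq K_{12}$ by maximality. Consequently $(B^1_2)^\flat=B^1_2/(K_{12}/\Ker\pi)=B_{12}/K_{12}=(B_{12})^\flat$, so everything is reduced to proving $\Ker\pi\subseteq K_{12}$.

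The heart of the matter is to identify $\Ker\pi$. Let $B_1=\widehat{\PB_1}$ be the quotient of $B$ dual to $\PB_1\subseteq\PB$, let $K_1=\Ker(B_1\tto B_1^\flat)$ be the $\kappa$-kernel of $Z_{B_1}$, and let $B_1\tto B_{12}$ be the quotient dual to $\widehat{B_{12}}\subseteq\PB_1$. I claim $\Ker\pi$ equals the image $K_1'$ of $K_1$ in $B_{12}$. Since $\widehat{B^1_2}\subseteq\widehat{B_1^\flat}$, the map $B_1\to B^1_2$ factors through $B_1\tto B_1^\flat$, so $K_1$ dies in $B^1_2$; passing through $B_{12}$ gives $K_1'\subseteq\Ker\pi$. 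For the reverse inclusion I would pass to duals: $\widehat{K_1'}$ is the image of $\widehat{B_{12}}$ under $\PB_1\tto\PB_1/\widehat{B_1^\flat}$, namely $\widehat{B_{12}}/(\widehat{B_{12}}\cap\widehat{B_1^\flat})^0=\widehat{B_{12}}/\widehat{B^1_2}$, which is precisely the dual of $\Ker\pi$; as $K_1'\subseteq\Ker\pi$ are connected of equal dimension, they coincide. This bookkeeping with neutral components of the relevant intersections is the one place where care is needed.

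Finally, the defining relation $K_1+Z_{B_1}=Z_{B_1}$ pushes forward along $B_1\tto B_{12}$ to $K_1'+Z_{B_{12}}=Z_{B_{12}}$, so $\Ker\pi=K_1'\subseteq K_{12}$ by maximality of the $\kappa$-kernel. By the reduction of the first paragraph this yields $(B^1_2)^\flat=(B_{12})^\flat$, completing the proof. The only genuine obstacle is the identification $\Ker\pi=K_1'$ in the second step; the rest is the formal behaviour of $\kappa$-kernels under pushforward already recorded in Lemma~\ref{easyprop}.
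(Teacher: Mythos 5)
Your proof is correct, but it follows a genuinely different route from the paper's. The paper's argument is a short two\mbox{-}inclusion chase at the level of dual tori: from $\widehat{B^1_2}\subseteq\widehat{B_{12}}$ and Lemma~\ref{easyprop} one gets $\widehat{(B^1_2)^\flat}\subseteq\widehat{(B_{12})^\flat}$, and conversely $\widehat{(B_{12})^\flat}\subseteq\widehat{B_1^\flat}\cap\PB_2$ forces $\widehat{(B_{12})^\flat}\subseteq\widehat{B^1_2}$, whence $\widehat{(B_{12})^\flat}=\widehat{((B_{12})^\flat)^\flat}\subseteq\widehat{(B^1_2)^\flat}$ by idempotence of $\flat$ and Lemma~\ref{easyprop} again. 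You instead work on the primal side: you identify $\Ker(\pi\colon B_{12}\tto B^1_2)$ as the image $K_1'$ of the $\kappa$-kernel of $Z_{B_1}$, verify $K_1'+Z_{B_{12}}=Z_{B_{12}}$ so that $\Ker\pi$ is absorbed into the $\kappa$-kernel of $Z_{B_{12}}$, and conclude that the two reductions are literally the same quotient. Your reduction in the first paragraph is sound (note that the equality $\pi^{-1}(Z_{B^1_2})=Z_{B_{12}}$ you use there is itself equivalent to the assumed containment $\Ker\pi\subseteq K_{12}$, so it is not free of charge --- it would be worth saying so explicitly), and you correctly handle the finite-index issue between $\widehat{B_{12}}\cap\widehat{B_1^\flat}$ and its neutral component via the connectedness-plus-dimension argument. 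What the paper's approach buys is brevity and reuse of Lemma~\ref{easyprop}; what yours buys is an explicit explanation of \emph{why} the statement holds: the only discrepancy between $B_{12}$ and $B^1_2$ is a piece of the $\kappa$-kernel, which the $\flat$ operation kills in any case.
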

\begin{proof}
It is clear that $\widehat{B^1_2} \subset  \widehat{B_{12}}$ and hence $\widehat{(B^1_2)^\flat} \subset  \widehat{(B_{12})^\flat}$ by Lemma \ref{easyprop}.
Moreover, since $\widehat{(B_{12})^\flat} \subset \widehat{B_{1}^\flat}$ and $\widehat{(B_{12})^\flat} \subset \widehat{B_{2}^\flat} \subset \widehat{B_2}$, one has
$\widehat{(B_{12})^\flat} \subset \widehat{B^1_2}$. Hence
$$ \widehat{(B_{12})^\flat} = \widehat{{(B_{12})^\flat}^\flat} \subset \widehat{(B^1_2)^\flat}.$$ This completes the proof.
\end{proof}

\subsection{Hodge type sheaves}
 In this article, we often consider a torsion-free coherent sheaf $\cF$ on a subvariety $i: Z\hookrightarrow A$  (we usually do not distinguish $\cF$ and $i_*\cF$)
 satisfying the following properties:
 \begin{itemize}
  \item[(P1)] $\cF$ is a GV sheaf on $A$;
  \item[(P2)] for all $i, k\geq 0$, the cohomological support loci $$V^i(\cF):=\{P\in\Pic^0(A)\mid \dim H^i(\cF\otimes P)>0\}$$ and
  $$V^i_k(\cF):=\{P\in\Pic^0(A)\mid \dim H^i(\cF\otimes P)\geq k\}$$ are  union of torsion translated abelian subvarieties of $\Pic^0(A)$;
  \item[(P3)] let $g: A\rightarrow B$ be a morphism between abelian varieties, let $Z_B$ be the image of $Z$, and let $r=\dim Z-\dim Z_B$, then $$\mathbf{R}g_{*}(\cF\otimes Q)=\bigoplus_{0\leq j\leq r}R^jg_*(\cF\otimes Q)[-j] \in D^b(B),$$ for any torsion line bundle $Q\in \Pic^0(A)$;
  \item[(P4)]   moreover,  $R^jg_*(\cF\otimes Q)$ is either $0$ or is a torsion-free GV sheaf on $Z_B$.

   \end{itemize}

 We call a torsion-free coherent sheaf on $Z$ satisfying Properties $(1)$, $(2)$, $(3)$ and $(4)$ a {\it Hodge sheaf on $A$ supported on $Z$}.

 \begin{lemm}\label{basic}
Let $\cF$ be a Hodge sheaf on $A$ supported on $Z$.
\begin{itemize}
\item[(1)] If for some $i>0$, $V^i(\cF)$ has a component $P_0+\PB_0$ of codimension $i$,  where $P_0$ is a torsion line bundle and $\PB_0$ is an abelian subvariety. Let $K$ be the kernel of $A\rightarrow B_0$, then $Z+K=Z$.  In particular, if $Z$ is of general type, $\cF$ is M-regular.
\item[(2)] Let $g: A\rightarrow B$ be a morphism between abelian varieties and let $Q\in \Pic^0(A)$ a torsion line bundle. If $R^jg_*(\cF\otimes Q)\neq 0$ for some $j\geq 0$, $R^jg_*(\cF\otimes Q)$ is a Hodge sheaf on $B$ supported on $Z_B$.
\item[(3)] Let $\cF'$ be a direct summand of $\cF$, then $\cF'$ is also a Hodge sheaf.
\end{itemize}
\end{lemm}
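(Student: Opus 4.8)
The plan is to establish the three parts in turn, deriving everything from the axioms (P1)--(P4) for $\cF$ together with standard generic vanishing.

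\emph{Part (1).} I would dualize the inclusion $\PB_0\hookrightarrow\PA$ to a quotient $\pi\colon A\to B_0$ with connected kernel $K$, so that $\dim K=\codim_{\PA}(P_0+\PB_0)=i$. For general $Q'\in\Pic^0(B_0)$ the point $P_0\otimes\pi^*Q'$ lies on the given component, hence $H^i(A,\cF\otimes P_0\otimes\pi^*Q')\neq 0$. Feeding the decomposition $\mathbf{R}\pi_*(\cF\otimes P_0)\isom\bigoplus_{0\le q\le r}R^q\pi_*(\cF\otimes P_0)[-q]$ from (P3) (with $r=\dim Z-\dim Z_{B_0}$) into the projection formula gives $H^i(A,\cF\otimes P_0\otimes\pi^*Q')\isom\bigoplus_q H^{i-q}(B_0,R^q\pi_*(\cF\otimes P_0)\otimes Q')$; since each $R^q\pi_*(\cF\otimes P_0)$ is a GV sheaf on $B_0$ by (P4), every term with $q<i$ vanishes for general $Q'$, leaving $H^0(B_0,R^i\pi_*(\cF\otimes P_0)\otimes Q')$. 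Its non-vanishing forces $R^i\pi_*(\cF\otimes P_0)\neq 0$, whence $i\le r$; as the fibres of $\pi|_Z$ lie inside cosets of $K$ one also has $r\le\dim K=i$, so $r=i=\dim K$ and the general fibre of $Z\to Z_{B_0}$ fills out an entire (irreducible) coset of $K$, that is, $Z+K=Z$. For the final assertion, if $Z$ were of general type but $\cF$ not M-regular, then some $V^i(\cF)$ with $i>0$ would have a component of codimension exactly $i$ (it cannot be smaller, by GV), which by the above realizes $Z$ as fibred by the positive-dimensional subtorus $K$, contradicting Ueno's criterion.

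\emph{Part (2).} Write $\cG=R^jg_*(\cF\otimes Q)$. Property (P1) is exactly the GV assertion contained in (P4) for $\cF$. For (P3) and (P4) I fix $h\colon B\to B'$ and a torsion $Q'\in\Pic^0(B)$, and set $\bar g=h\circ g$. Applying (P3) for $\cF$ to $g$ (twist $Q$) and using the projection formula, $\mathbf{R}\bar g_*(\cF\otimes Q\otimes g^*Q')\isom\bigoplus_q\mathbf{R}h_*\bigl(R^qg_*(\cF\otimes Q)\otimes Q'\bigr)[-q]$; applying (P3) for $\cF$ to $\bar g$ (twist $Q\otimes g^*Q'$) identifies the same object with the \emph{formal} complex $\bigoplus_s R^s\bar g_*(\cF\otimes Q\otimes g^*Q')[-s]$. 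The decomposition for $g$ supplies idempotent projectors onto its summands, and pushing them forward gives idempotent endomorphisms of this formal complex. Since the sheaves $R^s\bar g_*(\cdots)$ sit in a single degree each, the graded endomorphism ring of $\bigoplus_s R^s\bar g_*(\cdots)[-s]$ is concentrated in non-negative degrees with nilpotent positive part, so every idempotent is conjugate to one of degree zero, whose image is again a formal complex. Applied to the projector onto $q=j$ this shows $\mathbf{R}h_*(\cG\otimes Q')$ is formal --- which is (P3) for $\cG$ --- and exhibits $R^th_*(\cG\otimes Q')$ as a direct summand of $R^{t+j}\bar g_*(\cF\otimes Q\otimes g^*Q')$. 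The latter is $0$ or a torsion-free GV sheaf on $Z_{B'}$ by (P4) for $\cF$, and a direct summand of such is again torsion-free and GV (the cohomological support loci only shrink under passage to a summand); this yields (P4) for $\cG$.

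\emph{Property (P2) and Part (3).} Taking $h\colon B\to 0$ in the previous paragraph realizes $H^t(B,\cG\otimes Q')$ as a direct summand of $H^{t+j}(A,\cF\otimes Q\otimes g^*Q')$, so that $V^t(\cG)\subseteq\Pg^{-1}\bigl(V^{t+j}(\cF)\bigr)$, a finite union of torsion translates, since preimages of torsion translates under the homomorphism $\Pg\colon\Pic^0(B)\to\Pic^0(A)$ are again such. Upgrading this inclusion to the assertion that $V^t(\cG)$ and the loci $V^t_k(\cG)$ are \emph{themselves} unions of torsion translates --- equivalently, disentangling the different Leray layers $q$, which all accumulate inside $\Pg^{-1}(V^\bullet(\cF))$ --- is the main obstacle of the whole lemma. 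I would resolve it by promoting the layer-wise projectors to the universal cohomology complexes over $\Pic^0(B)$, so that each cohomology sheaf of $\cG$ becomes a genuine direct summand of the corresponding cohomology sheaf of $\cF$ pulled back along $\Pg$; combined with (P2) for all the loci $V^i_k(\cF)$ this pins down the jump loci of $\cG$.

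\emph{Part (3)} runs on the same mechanisms: a summand $\cF'$ is torsion-free, the inclusion $V^i(\cF')\subseteq V^i(\cF)$ gives (P1), the defining idempotent of $\cF'$ splits the decomposition (P3) of $\cF$ by the same lifting argument, and (P4) follows because $R^jg_*(\cF'\otimes Q)$ is a direct summand of $R^jg_*(\cF\otimes Q)$. For (P2) one notes that each irreducible (hence torsion-translate) component of $V^i(\cF)$ is, by irreducibility, contained in $V^i(\cF')$ or in $V^i(\cF'')$, the residual lower-dimensional overlaps being governed by the same family-of-projectors analysis --- again the crux of the argument.
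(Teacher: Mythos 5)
Your part (1) is correct and is precisely the standard argument the paper outsources to \cite[Lemma 1.1]{JLT}: isolate the top Leray piece $R^i\pi_*(\cF\otimes P_0)$ via (P3)--(P4) and a general twist, compare the generic fibre dimension of $Z\to Z_{B_0}$ with $\dim K=i$, and invoke Ueno for the M-regularity statement. Likewise your idempotent/formality mechanism for (P3) and (P4) in parts (2) and (3) -- push the projectors of the splitting of $\mathbf{R}g_*(\cF\otimes Q)$ through $\mathbf{R}h_*$, use that the endomorphism ring of a formal complex has nilpotent positive-degree part, and read off $R^th_*(\cG\otimes Q')$ as a direct summand of $R^{t+j}\bar g_*(\cF\otimes Q\otimes g^*Q')$ -- is exactly the content of \cite[Theorem 3.4]{K2}, which is what the paper cites at this point. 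So far the two proofs coincide.

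The one genuine gap is where you yourself flag it: the verification of (P2) for $\cG=R^jg_*(\cF\otimes Q)$ (and for a summand $\cF'$). Your proposed fix -- promoting the projectors to the universal cohomology complexes over $\Pic^0(B)$ -- does not by itself close it: even after $\cG$'s universal complex is realized as a direct summand of the one attached to $\cF$, the jump loci of a direct sum are unions of intersections $\bigcap_tV^{j-t}_{k_t}$ of the jump loci of the summands, and knowing that the \emph{total} loci are torsion translates says nothing a priori about each layer (your part (3) claim that each component of $V^i(\cF)$ lies in $V^i(\cF')$ or $V^i(\cF'')$ is true but does not bound the components of $V^i(\cF')$ itself, which could a priori be proper subvarieties of components of $V^i(\cF)$). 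The missing step is a semicontinuity comparison: let $W$ be a component of $V^s_k(\cG)$, put $j=s+t_0$, let $m_t$ be the generic value of $h^{j-t}(B,R^tg_*(\cF\otimes Q)\otimes-)$ on $W$ and $K=\sum_tm_t$. Then $W$ lies in $\{Q'\mid Q\otimes g^*Q'\in V^j_K(\cF)\}$, a union of torsion translates by (P2) for $\cF$; if $W'$ is the component containing $W$, the generic values $n_t$ on $W'$ satisfy $n_t\le m_t$ and $\sum_tn_t\ge K=\sum_tm_t$, forcing $n_{t_0}=m_{t_0}\ge k$ and hence $W'\subseteq V^s_k(\cG)$, so $W=W'$. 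The same comparison handles (P2) for a direct summand in part (3). The paper is equally laconic here, but this is the argument it is implicitly relying on, and your proposal as written does not supply it.
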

\begin{proof}
The proof of $(1)$ is standard, see for instance \cite[Lemma 1.1]{JLT}. $(3) $ is also clear.

For $(2)$, let $\cQ:=R^jg_*(\cF\otimes Q)$.
Then $\cQ$ is a GV sheaf on $Z_B$ by $(P4)$. By $(P3)$, we know that, for any $j\geq 0$ and $Q'\in \Pic^0(B)$,
$$h^j(A, \cF\otimes Q\otimes g^*Q')=\sum_{s+t=j}h^s(B, R^tg_*(\cF\otimes Q)\otimes Q').$$
Since all $V^j_k(\cF)$ are unions of torsion translated abelian subvarieties of $\Pic^0(A)$, all $V^s_k(\cQ)$  are union of torsion translated abelian subvarieties of $\Pic^0(B)$. Let $g': B\rightarrow B'$ be a morphism between abelian varieties and let $f=g'\circ g$. Then
$$\R f_*(\cF\otimes Q)=\R g'_*\R g_*(\cF\otimes Q)=\R g'_*(\bigoplus_j R^jg_*\big(\cF\otimes Q)[-j] \big).$$ We then conclude that $\cQ$ satisfies $(P3)$ and $(P4)$ by the same argument as in \cite[Theorem 3.4]{K2}.
\end{proof}

We call a Hodge sheaf on $A$ supported on $Z$ a {\it strong Hodge sheaf } if it satisfies furthermore the following two properties.
 \begin{itemize}
 \item[(P5)] For any morphism $g: A\rightarrow B$ between abelian varieties and for any $Q\in \Pic^0(A)$,
let $\epsilon: Z_B'\rightarrow Z_B$ be a desingularization,  then there exists a torsion-free coherent sheaf $\cF_Q$ on $Z_B'$ such that
  $\mathbf{R}\epsilon_{*}\cF_Q=g_*(\cF\otimes Q)$ and $\cF_Q\otimes \omega_{Z_B'}^{-1}$ is weakly positive on $Z_B'$;
  \item[(P6)] Let $g: A\rightarrow B$ be as above. For $b\in Z_B$ general, denote  $j: Z_b\hookrightarrow K$ the fibers of $Z$ and $A$ over $b$. Then $\cF\mid_{Z_b}$ satisfies $(P1)-(P5)$.
   \end{itemize}
 \begin{rema}
 It is clear that $\cF\mid_{Z_b}$ is a strong Hodge sheaf on $K$ supported on $Z_b$.
 \end{rema}
 \begin{lemm}\label{summarize}
Let $f: X\rightarrow A$ be a morphism from a compact K\"ahler manifold to an abelian variety. Let $\cF=R^jf_*\omega_X$ for some $j\geq 0$. Then $\cF$
is a strong Hodge sheaf on $A$ supported on $f(X)$.
\end{lemm}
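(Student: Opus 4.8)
The plan is to verify the six defining properties (P1)--(P6) one at a time, drawing on three classical packages---generic vanishing theory, Koll\'ar's splitting theorem for higher direct images of dualizing sheaves, and Viehweg's weak positivity---each in the compact K\"ahler incarnation supplied by the decomposition theorem of Pareschi--Popa--Schnell in \cite{PPS}. Two preliminary reductions streamline everything. First, since any morphism to an abelian variety factors through the Albanese up to translation, I may write $f$ as $X\to A_X\to A$ and invoke the generic vanishing statements in their standard form. Second, the torsion twist by $Q$ is harmless: an isogeny $A'\to A$ trivializing $Q$ (equivalently, the associated \'etale cyclic cover $\pi\colon \tilde X\to X$, for which $\omega_X\otimes f^*Q$ is a direct summand of $\pi_*\omega_{\tilde X}$) reduces every claim about $\cF\otimes Q$ to the untwisted case, after which Lemma \ref{basic}(3) returns to the summand.

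Properties (P1), (P2) assert that $\cF=R^jf_*\omega_X$ is a GV sheaf whose loci $V^i(\cF), V^i_k(\cF)$ are finite unions of torsion translates of abelian subvarieties. In the projective case these are Hacon's generic vanishing theorem together with the Green--Lazarsfeld--Simpson structure theorem; in the K\"ahler case both follow from the generic vanishing theory for Hodge modules of \cite{PPS}. For (P3), (P4) I set $h=g\circ f$ and use the projection formula $\cF\otimes Q=R^jf_*(\omega_X\otimes f^*Q)$. The splitting $\mathbf{R}g_*(\cF\otimes Q)\cong\bigoplus_{0\le j\le r}R^jg_*(\cF\otimes Q)[-j]$ is Koll\'ar's decomposition, combined with the compatibility $R^ih_*\cong\bigoplus_{j+k=i}R^jg_*R^kf_*$ for the composition; the range reflects the fiber dimension $r=\dim Z-\dim Z_B$. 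Torsion-freeness of each $R^jg_*(\cF\otimes Q)$ and its GV property on $Z_B$ come from Koll\'ar's torsion-freeness theorem and the stability of GV under pushforward by abelian variety morphisms (Pareschi--Popa); the K\"ahler versions are again given by Takegoshi and by \cite{PPS}, and the bookkeeping is identical to that in the proof of Lemma \ref{basic}(2) via \cite[Theorem 3.4]{K2}.

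For (P5) I pass to resolutions, choosing a desingularization $\epsilon\colon Z_B'\to Z_B$ together with a resolution $X'\to X$ admitting a morphism $h'\colon X'\to Z_B'$ lifting $h$, and setting $\cF_Q:=R^jh'_*(\omega_{X'}\otimes Q')$, where $Q'$ is the pullback of $Q$ to $X'$. Koll\'ar's decomposition identifies $\mathbf{R}\epsilon_*\cF_Q$ with $g_*(\cF\otimes Q)$, while $\cF_Q\otimes\omega_{Z_B'}^{-1}$ is, up to the torsion twist, the higher direct image $R^jh'_*\omega_{X'/Z_B'}$, hence weakly positive by Viehweg's weak positivity theorem in its K\"ahler extension. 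Property (P6) is a base-change statement: for $b\in Z_B$ general the fiber $X_b=h^{-1}(b)$ is a smooth compact K\"ahler manifold and $f_b\colon X_b\to K$ is again a morphism to an abelian variety, so Koll\'ar's cohomology-and-base-change gives $\cF\mid_{Z_b}\cong R^j(f_b)_*\omega_{X_b}$; applying the already-established (P1)--(P5) to $f_b$ shows $\cF\mid_{Z_b}$ satisfies them.

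The main obstacle is the passage to the compact K\"ahler category, where Koll\'ar's splitting and torsion-freeness, the Green--Lazarsfeld--Simpson loci structure, and Viehweg's weak positivity were not originally available: their validity here rests on viewing $\omega_X$ as the bottom piece of the trivial Hodge module and feeding it through the decomposition theorem of \cite{PPS}. Granting that framework the rest is assembly, and the step demanding genuine care is (P5), where the resolution $h'$ must be built compatibly with $\epsilon$ so that $\mathbf{R}\epsilon_*\cF_Q$ matches $g_*(\cF\otimes Q)$ and the weak positivity of the relative dualizing sheaf survives both the twist by $Q$ and the passage to the direct image.
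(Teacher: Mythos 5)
Your overall strategy is the same as the paper's: verify (P1)--(P6) one at a time, getting (P1)--(P2) from the generic vanishing package of \cite{PPS} together with Wang's torsion-point theorem, (P3)--(P4) from the Koll\'ar--Saito decomposition with the torsion twist absorbed into an \'etale cover, and (P6) from base change. Up through (P4) and for (P6) your argument matches the paper's and is fine.

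The step that fails as written is (P5). You set $\cF_Q:=R^jh'_*(\omega_{X'}\otimes Q')$ for the \emph{composite} map $h'\colon X'\to Z_B'$ lifting $h=g\circ f$. By the Koll\'ar--Saito splitting applied to the factorization of $h'$ through $f'$ and $g'$, this sheaf decomposes as $\bigoplus_{s+t=j}R^sg'_*R^tf'_*(\omega_{X'}\otimes Q')$, whereas the sheaf that (P5) requires --- one satisfying $\mathbf{R}\epsilon_*\cF_Q=g_*(\cF\otimes Q)=g_*\bigl(R^jf_*(\omega_X\otimes f^*Q)\bigr)$ --- is only the single summand with $s=0$. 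As soon as $j>0$ and the fibers of $g$ over $Z_B$ meet $f(X)$ in positive-dimensional sets, the summands with $s\geq 1$ are nonzero, so your claimed identity $\mathbf{R}\epsilon_*\cF_Q=g_*(\cF\otimes Q)$ is false. The repair is exactly the paper's choice: take $\cF_Q:=g'_*R^jf'_*(\omega_{X'}\otimes Q')$, note that it is a direct summand of $R^j(g'\circ f')_*(\omega_{X'}\otimes Q')$ by Saito's decomposition, and deduce the weak positivity of $\cF_Q\otimes\omega_{Z_B'}^{-1}$ from that of $R^j(g'\circ f')_*\omega_{X'/Z_B'}$. Note also that this last input is not Viehweg's weak positivity theorem (which concerns $f_*\omega_{X/Y}^{\otimes m}$) but the Popa and Schnell weak positivity results for higher direct images of dualizing sheaves via Hodge modules, which is what the paper cites.
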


\begin{proof}
First of all, \cite[Theorem A]{PPS} implies the property $(P1)$ for $\cF$.

The property $(P3)$ was proved by Koll\'ar in \cite{K2} when $X$
is projective and was proved by Saito in general (see \cite{S1} and \cite{S2}, or \cite[Theorem 14.2]{PPS}).

Combining $(P2)$ with the work of Green-Lazarsfeld \cite{GL}, we know that all cohomological support loci $V^i(\cF)$ are
translated abelian subvarieties of $\Pic^0(A)$. The fact that $V^i(\cF)$ always contains a torsion point is first proved by Wang in \cite{W},
see also \cite[Corollary 17.1]{PPS}.

Let $Z\rightarrow X$ be the \'etale cover induced by the torsion line bundle $Q$ and let $h: Z\rightarrow B$ be the composition
of morphisms. Then $R^kg_*(\cF\otimes Q)$ is a direct summand of $R^{j+k}h_*\omega_{Z}$  and hence we have $(P4)$.

For $(P5)$, we  consider a birational modification $\pi: X'\rightarrow X$ between compact K\"ahler manifolds such that the composition of
morphisms $f\circ \pi: X'\rightarrow Z_B$ factors through $\epsilon$ as follows: $X'\xrightarrow{f'} Z\xrightarrow{g'} Z_B'\xrightarrow{\epsilon} Z_B$.
Then by Saito's decomposition, we have $g_*(R^jf_*\omega_X\otimes Q)=\epsilon_*(g'_*R^jf'_*\omega_{X'})$. Note that  $g'_*R^jf'_*\omega_{X'}$ is a direct summand of $R^j(g'\circ f')_*\omega_{X'}$ by Saito's decomposition. Moreover,  $R^j(g'\circ f')_*\omega_{X'/Z_B'}$ is weakly positive (see Popa \cite[Theorem 10.4]{Pop} or Schnell \cite[Theorem 1.4]{Sch}). Hence $g'_*R^jf'_*\omega_{X'/Z_B'}$ is also weakly positive.

Finally, by base change, $\cF\mid_{Z_b}=R^jf_{b*}\omega_{X_b}$, where $f_b: X_b\rightarrow K$ is the induced morphism between fibers. Hence we also have $(P6)$.
\end{proof}

 \begin{rema}
More generally, the properties $(P1)-(P6)$ are satisfied by certain coherent sheaves which are graded pieces of the underlying $D$-modules of mixed Hodge modules.

Let $M=(\mathcal{M}, F_{\cdot}\mathcal{M}, M_{\mathbb{Q}})$ be a polarizable Hodge module on an abelian variety $A$. Then for each $k\in \mathbb{Z}$, the coherent sheaf $gr^F_k\cM$ satisfies $(P1)$ and $(P2)$ (see \cite{PPS}). Moreover, let $p$ be the smallest number such that $F_p\cM\neq 0$ and let $S(\cM):=F_{p}\cM$. Then Saito (\cite{S2}) showed that $S(\cM)$ satisfies $(P3)$ and $(P4)$. By Popa and Schnell's result on weakly positive properties of $S(\cM)$, it is also easy to show that $S(\cM)$ satisfies $(P5)$ and $(P6)$.

 \end{rema}

\section{Hodge sheaf $\cF$ supported on $Z$ with $\chi(\cF)=1$}
In this section,  we will prove Theorem \ref{fibration-to-divisor}.
We first prove a general but technical result  on the structure of $Z$ and Theorem \ref{fibration-to-divisor} is a direct consequence.

The following lemma is essentially due to Pareschi (see \cite{P}).
\begin{lemm}\label{pareschi}
Let $Z\hookrightarrow B$ be a subvariety of general type.
Let $\cF$ be a torsion-free sheaf on $Z$ such that $\cF$ is M-regular on $B$ and $\chi(B, \cF)=1$.
Then $V^1(\cF)\neq \emptyset$.
Moreover, for a component $W$ of $V^1(\cF)$, if $\codim_{\PB}W=j+1\geq 2$, then $W$ is indeed a component of $V^j(\cF)$.
\end{lemm}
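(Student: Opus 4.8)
The plan is to recast everything through the Fourier--Mukai/generic vanishing formalism for the rank-one dual sheaf. Since $\cF$ is M-regular it is a GV-sheaf, so $\chi(\cF\otimes P)=\chi(\cF)=1$ is independent of $P\in\PB$, and for general $P$ all higher cohomology vanishes, giving $h^0(\cF\otimes P)=1$; in particular (M-regular sheaves being continuously globally generated) $V^0(\cF)=\PB$. Write $g=\dim B$ and let $\widehat{\cF}:=R^g\widehat S(\mathbf R\Delta\,\cF)$ be the associated GV-dual sheaf on $\PB$: it is torsion-free of rank $\chi(\cF)=1$, and by Pareschi--Popa duality one has $V^i(\cF)=\Supp\,\cEx^i_{\PB}(\widehat{\cF},\cO_{\PB})$ (up to the involution $-1$). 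Both assertions thus become statements about the rank-one torsion-free sheaf $\widehat{\cF}$.

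For part (1) I would argue by contradiction, assuming $V^1(\cF)=\vide$. The aim is to deduce that $\cF$ is $\mathrm{IT}_0$: the hypothesis reads $\cEx^1_{\PB}(\widehat{\cF},\cO)=0$, and combining this with M-regularity ($\codim V^i>i$) and the support bound $\cEx^i(\widehat{\cF},\cO)=0$ for $i>\dim Z$ (cohomological dimension of $Z$), one wants to force the torsion quotient $\widehat{\cF}^{\vee\vee}/\widehat{\cF}$ to vanish, so that $\widehat{\cF}$ is a line bundle on $\PB$. Once $\cF$ is $\mathrm{IT}_0$ with $\chi(\cF)=1$, its Fourier--Mukai transform is a line bundle, hence $\cF$ is, up to translation, the transform of a line bundle on $\PB$; such a transform is supported on a translate of an abelian subvariety of $B$. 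As $\dim Z\geq1$, this contradicts $Z$ being of general type, since a positive-dimensional coset of a subtorus is never of general type. Hence $V^1(\cF)\neq\vide$.

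For the refined statement, let $W=P_0+\PB_0$ be a component of $V^1(\cF)$ with $\codim_{\PB}W=j+1\geq2$, $\PB_0\subset\PB$ an abelian subvariety and $P_0$ torsion. Dualizing $\PB_0\hookrightarrow\PB$ gives a quotient $\pi\colon B\to\bar B$ with $\widehat{\bar B}=\PB_0$ and kernel $K=\ker\pi$ of dimension $j+1$. Since $Z$ is of general type it is not $K$-invariant, so the general fibre of $\pi|_Z$ has dimension at most $j$; hence $R^t\pi_*(\cF\otimes P_0)$ vanishes on a dense open subset of $\bar B$ for $t>j$. The crux --- and the main obstacle --- is to prove $W\subseteq V^j(\cF)$. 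Here I would analyse the Leray spectral sequence $H^s(\bar B,R^t\pi_*(\cF\otimes P_0)\otimes\alpha)\Rightarrow H^{s+t}(B,\cF\otimes P_0\otimes\pi^*\alpha)$ for $\alpha\in\PB_0$, and use cohomology-and-base-change together with the nonvanishing of $H^1$ along the entire subtorus direction $W$ to show that the top direct image $R^j\pi_*(\cF\otimes P_0)$ is nonzero and contributes to $H^j(\cF\otimes P_0\otimes\pi^*\alpha)$ for general $\alpha$, so that $W\subseteq V^j(\cF)$. This step genuinely requires the global torsion-translate structure of $W$ and the general-type hypothesis on $Z$: a purely local computation on $\PB$ via $\cEx^\bullet(\widehat{\cF},\cO)$ does not suffice, as the desired implication already fails at the level of local duality for a general rank-one sheaf. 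Granting $W\subseteq V^j(\cF)$, M-regularity forces every component of $V^j(\cF)$ to have codimension $\geq j+1$; since $\codim_{\PB}W=j+1$ is minimal, $W$ is an entire component of $V^j(\cF)$, as claimed.
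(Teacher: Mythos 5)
Your treatment of the first assertion is essentially the paper's argument run in the contrapositive, and it is fine: if $V^1(\cF)=\emptyset$ then by the nesting $V^i(\cF)\subseteq V^1(\cF)$ for GV sheaves all $R^i\Phi(\cF)$ vanish for $i\geq 1$, hence all $\cEx^i(\widehat{\cF},\cO_{\PB})$ vanish for $i\geq 1$, the rank-one torsion-free transform $\widehat{\cF}$ is reflexive and therefore a line bundle, and $\R\Delta(\cF)=(-1)^*\R\Psi(\widehat{\cF})$ is then supported on a union of translated subtori, contradicting $Z$ being of general type.

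The second assertion is where the proposal breaks down. You explicitly leave the key step $W\subseteq V^j(\cF)$ as ``the main obstacle'' and only sketch a strategy; moreover the strategy you sketch --- push forward along the quotient $\pi\colon B\to \bar B$ dual to $\PB_0\subseteq\PB$ and isolate the contribution of $R^j\pi_*(\cF\otimes P_0)$ in the Leray spectral sequence --- needs exactly the structure this lemma does not assume. For an arbitrary torsion-free M-regular sheaf there is no K\"ollar-type splitting of $\R\pi_*(\cF\otimes P_0)$ and no reason for the direct images $R^t\pi_*(\cF\otimes P_0)$ to be GV on $\bar B$, so you cannot kill the off-diagonal terms for general $\alpha\in\PB_0$; those are properties $(P3)$--$(P4)$ of Hodge sheaves, and the paper needs Lemma \ref{pareschi} in the generality of plain M-regular sheaves with $\chi=1$. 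Worse, your stated reason for abandoning the local route --- that the implication ``already fails at the level of local duality for a general rank-one sheaf'' --- is incorrect, and the paper's proof is precisely that local computation. Since $\chi(\cF)=1$, one has $\widehat{\R\Delta(\cF)}=L\otimes\cI_{\cZ}$ with $\cZ$ of codimension at least $2$, and $(-1)^*R^i\Phi(\cF)\simeq\cEx^i(L\otimes\cI_{\cZ},\cO_{\PB})\simeq\cEx^{i+1}(L\otimes\cO_{\cZ},\cO_{\PB})$ for $i\geq 1$; hence $V^1(\cF)=\bigcup_{i\geq1}\Supp R^i\Phi(\cF)$ coincides set-theoretically with $-\cZ$, so a codimension-$(j+1)$ component $W$ of $V^1(\cF)$ is, up to sign, an irreducible component of $\cZ$. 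At the generic point of that component $\cO_{\cZ}$ localizes to a nonzero Artinian module over a $(j+1)$-dimensional regular local ring, so local duality gives $\cEx^{j+1}(\cO_{\cZ},\cO_{\PB})\neq0$ there, i.e.\ $R^j\Phi(\cF)\neq0$ along $-W$ and $W\subseteq V^j(\cF)$; your concluding codimension count then upgrades this to $W$ being a component. Note that the hypothesis $\chi(\cF)=1$, which your part-(2) sketch never uses, is exactly what makes this local argument work.
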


\begin{proof}
Indeed, we can apply the argument of Pareschi in the proof
of \cite[Theorem 5.1]{P}.
Denote by $\R\Phi: \mathrm{D}(B)\rightarrow \mathrm{D}(\PB)$ and $\R\Psi: \mathrm{D}(\PB)\rightarrow \mathrm{D}(B)$
the Fourier-Mukai functors induced by the normalized Poincar\'e line bundles $\cP$ on $B\times \PB$.
Let $\R\Delta(\cF):=\R\mathcal{H}om(\cF, \cO_B)\in \mathrm{D}(B)$.
Then Pareschi and Popa (\cite[Corollary 3.2]{PP1}) proved that $\R\Phi(\R\Delta(\cF))[-g]=\R^g\Phi(\R\Delta(\cF))$  is a torsion-free coherent sheaf on $\PB$ of rank  equal to $\chi(B, \cF)$, i.e. $1$.
Hence, we can write $\R\Phi(\R\Delta(\cF))[-g]=L\otimes \cI_{\cZ}$,
where $L$ is a line bundle on $\PB$ and $\cI_{\cZ}\hookrightarrow \cO_{\PB}$ is an ideal sheaf of
a subscheme $\cZ$  of of $\PB$.
On the other hand, by Mukai's formula,
we know that $(-1)^*\R\Psi(L\otimes \cI_{\cZ})=\R\Delta(\cF)$, whose support is also $Z$. Hence $\cI_{\cZ}$ is a proper subsheaf of $\cO_{\PB}$ otherwise the support of $(-1)^*\R\Psi(L\otimes \cI_{\cZ})$ is a union of translated abelian subvarieties.

Moreover, we know that
$(-1_{\PB})^*\R^{j}\Phi(\cF)\simeq \mathcal{E}xt^j(L\otimes \cI_{\cZ}, \cO_{\PB})$ (see for instance \cite[Proposition 1.6]{P}).
For any $j\geq 1$, $\mathrm{Supp}\big( \mathcal{E}xt^j(L\otimes \cI_{\cZ}, \cO_{\PB})\big)\subset \cZ$.
Hence for any $j\geq 1$, $\mathrm{Supp}(\R^{j}\Phi(\cF))\subset 0_{\PB}-\cZ$.
By cohomology and base-change, $V^1(\cF)\subset 0_{\PB}-\cZ$.

Let $W$ be a component of $V^1(\cF)$  of codimension $j+1\geq 2$. Then the support of
$\mathcal{E}xt^j(L\otimes \cI_{\cZ}, \cO_{\PB})\simeq \mathcal{E}xt^{j+1}(L\otimes \cO_{\cZ}, \cO_{\PB})$ has $0_{\PB}-W$ as a component.
Hence $V^j(\cF)$ has a component $W$ of codimension $j+1, j>0$.
\end{proof}

Let $Z$ be a subvariety of general type of an abelian variety $B$ of codimension $s$.
If $s=1$, $Z$ is an ample divisor of $B$.
The following lemma deals with the higher codimension cases.

\begin{lemm}\label{fibration}
 Let $Z\hookrightarrow B$ be a subvariety of general type
of codimension $s\geq 1$. Assume that there exists a Hodge sheaf $\cF$ supported on $Z$ with $\chi(Z, \cF)=1$.
For any component $T_i=P_i+\PB_i$ of $V^1(\cF)$, where $P_i$ is a torsion line bundle and $\PB_i$ is an abelian subvariety of $\PB$, let $Z_i:=Z_{B_i} $ and $Z_i^\flat:=Z_{B_i}^\flat$ be the $\kappa$-reduction of $Z_i$.  Then $\codim_{B_i}Z_i=\codim_{B_i^\flat} Z_i^\flat=s-1$.
\end{lemm}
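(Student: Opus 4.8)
The plan is to reduce the whole statement to the single numerical identity
\[
r \;:=\; \dim Z-\dim Z_i \;=\; c_i-1,\qquad\text{where } c_i:=\codim_{\PB}T_i=\codim_{\PB}\PB_i .
\]
Here I let $p_i\colon B\to B_i$ be the quotient dual to the inclusion $\PB_i\hookrightarrow\PB$, with kernel $K_i=\Ker(p_i)$, so that $\dim K_i=c_i$, $\Pic^0(B_i)=\PB_i$, $Z_i=p_i(Z)$, and $r$ is the maximal fibre dimension of $p_i|_Z$. Granting $r=c_i-1$, the codimension count is immediate: from $\dim Z=\dim B-s$ and $\dim B_i=\dim B-c_i$ one gets $\dim Z_i=\dim Z-r=\dim B-s-c_i+1$, whence $\codim_{B_i}Z_i=\dim B_i-\dim Z_i=s-1$. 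The assertion for the $\kappa$-reduction then follows formally: since $Z_i\to Z_i^\flat$ and $B_i\to B_i^\flat$ are both fibred by the $\kappa$-kernel of $Z_i$, passing to the $\kappa$-reduction subtracts the same integer from $\dim Z_i$ and from $\dim B_i$, so $\codim_{B_i^\flat}Z_i^\flat=\codim_{B_i}Z_i=s-1$.

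I would first record that $\cF$ is M-regular: this is exactly Lemma \ref{basic}(1), using that $Z$ is of general type. Consequently every component $T_i$ of $V^1(\cF)$ has $c_i\geq 2$, so Lemma \ref{pareschi} applies to it. The easy inequality $r\le c_i-1$ uses only that $Z$ is of general type: since $K_i+Z=p_i^{-1}(Z_i)$ has dimension $\dim Z_i+\dim K_i$, and since a positive-dimensional subtorus cannot stabilize a subvariety of general type, we have $K_i+Z\supsetneq Z$, hence $\dim Z_i+c_i=\dim(K_i+Z)>\dim Z$, i.e. $r<c_i$.

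The reverse inequality $r\ge c_i-1$ is the point I expect to be the main obstacle; here I would combine Lemma \ref{pareschi} with properties (P3) and (P4). By Lemma \ref{pareschi}, since $\codim_{\PB}T_i=c_i\geq 2$, the component $T_i$ is in fact a component of $V^{c_i-1}(\cF)$, so $H^{c_i-1}(\cF\otimes P)\neq 0$ for general $P\in T_i$. Writing such a $P$ as $P=P_i\otimes p_i^*P'$ with $P'\in\Pic^0(B_i)=\PB_i$ general, the projection formula together with the decomposition (P3) for the torsion line bundle $P_i$ gives
\[
H^{c_i-1}(B,\cF\otimes P)=\bigoplus_{a+t=c_i-1}H^a\big(B_i,\;R^t p_{i*}(\cF\otimes P_i)\otimes P'\big).
\]
Each $R^t p_{i*}(\cF\otimes P_i)$ is a GV sheaf by (P4), so its cohomological support loci $V^a$ are proper closed subsets of $\Pic^0(B_i)$ for every $a\geq 1$; as there are only finitely many $t$, a general $P'$ avoids all of them and only the summand with $a=0$ survives. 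Therefore $H^0\big(B_i,\,R^{c_i-1}p_{i*}(\cF\otimes P_i)\otimes P'\big)\neq 0$, which forces $R^{c_i-1}p_{i*}(\cF\otimes P_i)\neq 0$. Since the higher direct images vanish for $t>r$, this yields $c_i-1\leq r$, and combined with the previous paragraph completes the proof. The delicate step to get right is the identification of the surviving Leray term: one must use the GV property of \emph{all} the $R^t p_{i*}$ simultaneously to kill every term with $a\ge 1$, and this is precisely what (P4) supplies.
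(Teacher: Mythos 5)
Your proposal is correct and follows essentially the same route as the paper's own proof: both use M-regularity (from general type) to apply Lemma \ref{pareschi}, upgrade $T_i$ to a component of $V^{c_i-1}(\cF)$, and then combine (P3) with the GV property (P4) of the higher direct images to force $R^{c_i-1}p_{i*}(\cF\otimes P_i)\neq 0$, giving the lower bound on the general fibre dimension; the upper bound via the stabilizer of $Z$ is just a rephrasing of the paper's observation that a general fibre of $h_i$ must be a proper subvariety (hence a divisor) of the corresponding translate of $\Ker(B\to B_i)$.
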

\begin{proof}

Since $\cF$ is a Hodge sheaf, by definition, we know that each component $T_i$ of $V^1(\cF)$  can be written as $P_i+\PB_i$, where $P_i$ is a torsion line bundle and $\PB_i$ is an abelian subvariety of $\PB$,

We  consider the commutative diagram
\begin{eqnarray}\label{diagram-fibration}
\xymatrix{
Z\ar[d]_{h_i}\ar@/_2pc/[dd]_{h_i^\flat}\ar@{^{(}->}[r] & B\ar[d]^{p_i}\ar@/^2pc/[dd]^{p_i^\flat}\\
Z_i\ar[d]\ar@{^{(}->}[r]  &B_i\ar[d] \\
Z_{i}^\flat\ar@{^{(}->}[r]  &{B_i}^\flat,}
\end{eqnarray}
where  both the fibers of $Z_i\rightarrow Z_i^\flat$ and $B_i \rightarrow B_i^\flat$ are translates of abelian variety $R_{B_i}=\ker(B_i \to B_i^\flat)$.

By construction, $\codim_{B_i}Z_i=\codim_{B_i^\flat}Z_i^\flat$, we just need to show that $\codim_{B_i}Z_i=s-1$.

Assume that $\codim_{\PB}\PB_i=j+1$. Then, by Lemma \ref{pareschi}, $P_i+\PB_i$ is a component of $V^j(\cF)$,
by $(P3)$, we know that for any $Q\in \PB_i$, $$0\neq H^j(Z, \cF\otimes P_i\otimes Q)=\sum_{l+k=j}H^l(Z_i, R^kh_{i*}(\cF\otimes P_i)\otimes Q). $$
By  $(P4)$, all $R^kh_{i*}(\cF\otimes P_i)$ are GV-sheaves on $Z_i$. Thus for a general $Q$ the right hand side has a single term $H^0(Z_i, R^jh_{i*}(\cF\otimes P_i)\otimes Q)$.
We then conclude that $R^jh_{i*}(\cF\otimes P_i)$ is a non-trivial torsion-free sheaf on $Z_i$.
By  $(P3)$,  a general fiber of $h_i$ has dimension at least $j$. Moreover, $Z$ is of general type, thus a general fiber $Z_t$ of $h_i$ must be a divisor of $B_t$.   Hence $\dim Z_i=\dim Z-j$, $\dim B_i=\dim B-j-1$, and $\codim_{B_i}Z_i=\codim_BZ-1=s-1$.
\end{proof}


For the inductive purpose, we need the following more refined statement.

\begin{lemm}\label{correspondence} Keep the assumptions of Lemma \ref{fibration}. We then consider the diagram (\ref{diagram-fibration}) with $i=1$.
Let $Q\in \PB$ be a general torsion point (in particular, $Q\notin V^1(\cF)$) and consider
the sheaf $\cF_Q:=h_{1*}^\flat (\cF\otimes Q)$.

Then the map
\begin{eqnarray*}
&& \PB_k \mapsto \widehat{B^1_k} \colon=\mathrm{the\; neutral\; component\; of\;} \PB_k\cap \widehat{B_1^\flat}
\end{eqnarray*}
induces an bijection between the following sets:
$$S_1:=\{\PB_k\mid T_k=P_k+\PB_k\;\mathrm{is\; a\; component\; of}\; V^1(\cF)\; \mathrm{and}\; q(\PB_k)=\PB/\widehat{B_1^\flat}\},$$ and
$$S_2:=\{\PB'_k\mid T_k'=P_k'+\PB_k'\;\mathrm{is\; a\; component\; of}\; V^1(\cF_Q)\},$$
where $q: \PB \to \PB/\widehat{B_1^\flat}$  is the natural quotient.
\end{lemm}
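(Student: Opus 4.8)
The plan is to prove the bijection by showing that, for a sufficiently general torsion point $Q$, the decomposition $(P3)$ for the morphism $p_1^\flat\colon B\to B_1^\flat$ collapses, so that $\mathbf{R}p^\flat_{1*}(\cF\otimes Q)=\cF_Q$ sits in degree $0$. Once this is in place, the cohomological support loci of $\cF_Q$ are literally the slices of those of $\cF$ by the subtorus $\widehat{B_1^\flat}$, and the asserted bijection reduces to intersecting the components of $V^1(\cF)$ with a general translate of $\widehat{B_1^\flat}$.

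\textbf{Step 1 (vanishing of the higher direct images).} Since $Z$ is of general type, $\cF$ is M-regular on $B$ by Lemma \ref{basic}(1); by Lemma \ref{basic}(2) each $R^bp^\flat_{1*}(\cF\otimes Q)$ is a Hodge sheaf supported on $Z_1^\flat$, and as $Z_1^\flat$ is of general type every nonzero such sheaf is again M-regular, hence satisfies $h^0(\cdot\otimes P')=\chi>0$ for general $P'\in\Pic^0(B_1^\flat)=\widehat{B_1^\flat}$. Because $Q\notin V^i(\cF)$ for all $i>0$ and $Q$ is general, the generic point of the coset $Q+\widehat{B_1^\flat}$ avoids every $V^i(\cF)$ with $i>0$; combined with $(P3)$,
\begin{eqnarray*}
h^i\big(B,\cF\otimes Q\otimes p_1^{\flat*}P'\big)=\sum_{b} h^{i-b}\big(B_1^\flat, R^bp^\flat_{1*}(\cF\otimes Q)\otimes P'\big),
\end{eqnarray*}
taking $i$ equal to the top index $b_0$ with $R^{b_0}p^\flat_{1*}(\cF\otimes Q)\neq 0$ forces $h^0(R^{b_0}p^\flat_{1*}(\cF\otimes Q)\otimes P')=0$ for general $P'$, which contradicts M-regularity unless $b_0=0$. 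Hence $R^bp^\flat_{1*}(\cF\otimes Q)=0$ for $b>0$ and $\mathbf{R}p^\flat_{1*}(\cF\otimes Q)=\cF_Q$. By the projection formula this gives, for every $i$ and every $P'\in\widehat{B_1^\flat}$,
\begin{eqnarray*}
H^i(B_1^\flat,\cF_Q\otimes P')=H^i\big(B,\cF\otimes Q\otimes p_1^{\flat*}P'\big),
\end{eqnarray*}
so that $V^i(\cF_Q)=\big(V^i(\cF)-Q\big)\cap\widehat{B_1^\flat}$ once $\widehat{B_1^\flat}$ is identified with its image $p_1^{\flat*}\widehat{B_1^\flat}\subset\PB$.

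\textbf{Step 2 (component correspondence).} Writing $V^1(\cF)=\bigcup_k(P_k+\PB_k)$, Step 1 yields $V^1(\cF_Q)=\bigcup_k\big[(P_k-Q+\PB_k)\cap\widehat{B_1^\flat}\big]$. A term is nonempty exactly when $Q-P_k\in\PB_k+\widehat{B_1^\flat}$; since $Q$ is general this can occur only if $\PB_k+\widehat{B_1^\flat}=\PB$, i.e. $q(\PB_k)=\PB/\widehat{B_1^\flat}$, which is the defining condition of $S_1$. For such a $k$ the slice is a translate of $\PB_k\cap\widehat{B_1^\flat}$, whose neutral component is $\widehat{B^1_k}$, so I send $\PB_k\mapsto\widehat{B^1_k}$. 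To see $\widehat{B^1_k}\in S_2$, note the slice contains a coset of $\widehat{B^1_k}$ inside $V^1(\cF_Q)$, while any component of $V^1(\cF_Q)$ containing it has, after the $Q$-translation, a translated subtorus lying inside the single component $P_k+\PB_k$ of $V^1(\cF)$; the two inclusions force its translation part to equal $\widehat{B^1_k}$, and the dimension count $\codim_{\PB}\PB_k=\codim_{\widehat{B_1^\flat}}\widehat{B^1_k}$ (valid as $\PB_k+\widehat{B_1^\flat}=\PB$) keeps track of codimensions.

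\textbf{Step 3 (inverse and compatibility).} The inverse assigns to a component $P'+\PB'$ of $V^1(\cF_Q)$ the unique component of $V^1(\cF)$ containing the translated subtorus $Q+P'+\PB'$; generality of $Q$ forces that component into $S_1$, and by Step 2 it maps back to $\PB'$, so the two maps are mutually inverse at the level of components with translation parts matched as $\PB_k\leftrightarrow\widehat{B^1_k}$, giving the stated bijection of $S_1$ with $S_2$. Finally, Lemmas \ref{easyprop} and \ref{easy} show that the $\kappa$-reduction attached to $\widehat{B^1_k}$ coincides with the one attached to the neutral component of $\PB_k\cap\PB_1$, which is the compatibility needed for the inductive application. \textbf{The main obstacle is Step 1}: a priori the decomposition for $p_1^\flat$ carries higher direct images $R^{b}p^\flat_{1*}(\cF\otimes Q)$ whose global sections would pollute $V^1(\cF_Q)$ and destroy the clean slicing identity; the crux is that generality of $Q$ together with M-regularity of these sheaves on the general-type variety $Z_1^\flat$ forces them all to vanish. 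After that, Steps 2–3 are bookkeeping with translated subtori, the only delicate point being to use generality of $Q$ to guarantee that distinct components of $V^1(\cF)$ in $S_1$ are not merged under the slicing.
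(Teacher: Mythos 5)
Your argument is correct and follows essentially the same route as the paper: both rest on the Koll\'ar-type splitting $(P3)$ together with the M-regularity (via Lemma \ref{basic} and the general-type hypothesis on $Z_1^\flat$) of the higher direct images, forced to vanish by the generality of $Q$, after which $V^1(\cF_Q)$ is identified with the slice $\bigl(V^1(\cF)-Q\bigr)\cap \widehat{B_1^\flat}$ and the bijection is read off component by component. The only cosmetic difference is that you kill all $R^jh^\flat_{1*}(\cF\otimes Q)$ for $j>0$ by a top-degree argument, whereas the paper only needs (and only proves) the vanishing of $R^1$, which already yields the $h^1$-identity used for $V^1$.
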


\begin{proof}
 We have $h^0(Z^\flat_1, \cF_Q)=h^0(Z, \cF\otimes Q)=\chi(Z, \cF\otimes Q)=\chi(Z, \cF)=1$, where the second equality uses the assumption that $Q$ is general.
Since $Z^\flat_1$ is of general type and  by Lemma \ref{basic}, $\cF_Q$ is a Hodge sheaf on $B^\flat_1$ supported on $Z^\flat_1$, $\cF_Q$ is M-regular.
By upper-semi-continuity of cohomology, $h^0(Z^\flat_1, \cF_Q\otimes Q')=1$ for $Q'\in \Pic^0(B^\flat_1)$ general.
Hence, $\chi(Z^\flat_1, \cF_Q)=h^0(Z^\flat_1, \cF_Q\otimes Q')=1$.

Since $Q\notin V^1(\cF)$, we know that $R^1h^\flat_{1*}(\cF\otimes Q)=0$.
Otherwise, $R^1h^\flat_{1*}(\cF\otimes Q)$ is a M-regular sheaf on $Z^\flat_1$ since $Z^\flat_1$ is of general type.
Then $V^0(R^1h^\flat_{1*}(\cF\otimes Q))=\widehat{B^\flat_1}$ and by
$(P3)$, $Q\in V^1(\cF)$, which is a contradiction. Hence, $$h^1(Z, \cF\otimes Q\otimes {h^\flat_1}^*P)=h^1(Z^\flat_1, \cF_Q\otimes P),$$ for any $P\in \widehat{B^\flat_1}$.

Consider the sequence
\begin{eqnarray*}
\xymatrix{\widehat{B^\flat_1}\ar@{^{(}->}[r]^{{h^\flat_1}^*} &\PB\ar@{->>}[r]^q &\PB/\widehat{B^\flat_1}}.
 \end{eqnarray*}
We then conclude that $V^1(\cF_Q)+Q=q^{-1}(q(Q))\cap V^1(\cF)$. As $Q \in \widehat{B^\flat_1}$ is general, we have the  bijection $S_2\rightarrow S_1$ described above.
\end{proof}

\begin{lemm}\label{complement}
Let $Z\hookrightarrow B$ be a subvariety of general type
of codimension $s\geq 2$. Assume that $Z$ generates $B$ and there exists a Hodge sheaf $\cF$ supported on $Z$ with $\chi(Z, \cF)=1$. For any two components $\PT_i=P_i+PB_i$ and $\PT_j=P_j+PB_j$, if $\PB_i+\PB_j=\PB$,
then $\widehat{B^\flat_i}+\widehat{B^\flat_j}=\PB$.
\end{lemm}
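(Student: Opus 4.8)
The plan is to dualize the desired conclusion and then play a dimension count against the fact that $Z$ is of general type. Writing $p_i^\flat\colon B\to B_i^\flat$ and $p_j^\flat\colon B\to B_j^\flat$ for the two $\kappa$-reduction maps of diagram (\ref{diagram-fibration}), the inclusions $\widehat{B_i^\flat}\subset\PB$ and $\widehat{B_j^\flat}\subset\PB$ are dual to these surjections, and an elementary duality gives
\[
\widehat{B_i^\flat}+\widehat{B_j^\flat}=\PB \quad\Longleftrightarrow\quad \ker\bigl(p_i^\flat\times p_j^\flat\colon B\to B_i^\flat\times B_j^\flat\bigr)\ \text{is finite}.
\]
In exactly the same way the hypothesis $\PB_i+\PB_j=\PB$ says that $\ker(B\to B_i\times B_j)$ is finite. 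So the content of the lemma is to upgrade finiteness of the coarser kernel to finiteness of the $\kappa$-reduced one, and I will argue by showing that the neutral component $R$ of $\ker(p_i^\flat\times p_j^\flat)$ must be trivial.

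First I would record that the image $Z_T:=(p_i^\flat\times p_j^\flat)(Z)$ is itself of general type. Its ambient torus $B_T:=\Im(p_i^\flat\times p_j^\flat)$ embeds in $B_i^\flat\times B_j^\flat$, so the kernels of the two projections $B_T\to B_i^\flat$ and $B_T\to B_j^\flat$ meet only in $0$; any subtorus $S\subset B_T$ with $S+Z_T=Z_T$ would project to subtori of $B_i^\flat$ and $B_j^\flat$ fixing $Z_i^\flat$ and $Z_j^\flat$ respectively, hence to $0$ since both $Z_i^\flat,Z_j^\flat$ are of general type, forcing $S=0$. Next, set $\rho=\dim R$. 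By Lemma \ref{fibration} the general fibre of $Z\to Z_i^\flat$ is a divisor in a translate of $\ker p_i^\flat$, and $p_j^\flat$ collapses the subtorus $\ker p_i^\flat\cap\ker p_j^\flat$ (of dimension $\rho$); a direct fibre-dimension computation for $Z_T\to Z_i^\flat$ then yields $\dim Z_T\le\dim Z+1-\rho$, while $\dim Z_T\ge\dim Z-\rho$ because every fibre of $Z\to Z_T$ lies in a coset of $R$. Hence a general fibre $G$ of $Z\to Z_T$ has dimension either $\rho$ or $\rho-1$.

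The dichotomy now splits cleanly. If $\dim G=\rho$ then $G$ is a full coset of $R$, so $R+Z=Z$; as $Z$ is of general type this forces $\rho=0$ and we are done. The remaining case $\dim G=\rho-1$ with $\rho\ge1$ -- where the fibres of $Z\to Z_T$ are divisors inside $R$-cosets and $Z_T$ has codimension $s-1$ in $B_T$ -- is the one where the plain general-type argument fails, and excluding it is, I expect, the main obstacle. Here I would reintroduce the hypotheses $\chi(Z,\cF)=1$ and the component structure: restricting $\cF$ to a general fibre $G$ via property (P6) produces a Hodge sheaf on the torus $R$ supported on the divisor $G$, and applying the mechanism of Lemma \ref{fibration} to this divisor should manufacture a further component of $V^1(\cF)$ whose direction meets $\widehat R$ nontrivially. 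Feeding this extra component, together with $\PT_i$ and $\PT_j$, through Lemmas \ref{correspondence} and \ref{easy} to keep track of the $\kappa$-reductions should then contradict $\codim_B Z=s$, since it would make the codimension of the $\kappa$-reduction drop by one more than Lemma \ref{fibration} allows. This is precisely the step where the generic-vanishing input carried by $\cF$ is indispensable, the bare dimension count of the preceding paragraph being insufficient by itself.
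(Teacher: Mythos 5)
Your reduction is sound as far as it goes: the duality reformulation (finiteness of $\ker(p_i^\flat\times p_j^\flat)$), the verification that $Z_T$ is of general type, the dimension count giving $\dim Z-\dim Z_T\in\{\rho-1,\rho\}$, and the disposal of the case $\dim G=\rho$ are all correct. But the argument stops exactly where the content lies: the case $\dim G=\rho-1$ is not excluded, and what you offer there is a chain of ``should''s rather than a proof. Note that for $\rho=1$ this case means $Z\to Z_T$ is generically finite with $Z_T$ of general type of codimension $s-1$ in $B_T$; nothing in your set-up forbids a general-type subvariety from mapping generically finitely to its image in $B/R$ for a one-dimensional $R$, so the contradiction must come from how $R$ sits relative to $\PB_i$ and $\PB_j$ --- information your dimension count never uses beyond $\dim(\ker p_i^\flat\cap\ker p_j^\flat)=\rho$. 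So there is a genuine gap.

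Your diagnosis of what is missing is also off: the paper needs no further component of $V^1(\cF)$, no appeal to $(P6)$, and no generic-vanishing input beyond what already produced $\PB_i$ and $\PB_j$; its proof is pure $\kappa$-reduction bookkeeping. Since $\PB_i+\PB_j=\PB$, the map $B\to B_i\times_{B_{ij}}B_j$ is an isogeny, so $Z$ is a component of an \'etale cover of $Z_i\times_{Z_{ij}}Z_j$ and the general fibre of $Z_i\to Z_{ij}$ is of general type (it is, up to \'etale cover, a general fibre of $Z\to Z_j$, whose $\kappa$-kernel is independent of the fibre and would otherwise translate $Z$). Hence the $\kappa$-kernel $R_i$ of $Z_i$ meets $\ker(B_i\to B_{ij})$ in a finite set, i.e.\ $R_i\to B_{ij}$ is an isogeny onto its image $R_i'$, and $R_i'+Z_{ij}=Z_{ij}$. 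The count
\[
\dim B_i^\flat+\dim B_j-\dim(B_{ij}/R_i')=\dim B_i+\dim B_j-\dim B_{ij}=\dim B
\]
then shows $B\to B_i^\flat\times_{B_{ij}/R_i'}B_j$ is an isogeny, i.e.\ $\widehat{B_i^\flat}+\PB_j=\PB$; running the identical argument once more with the pair $(B_j,B_i^\flat)$ gives $\widehat{B_i^\flat}+\widehat{B_j^\flat}=\PB$. That elementary step is what your case $\dim G=\rho-1$ is missing, and it is a general-type statement, not a sheaf-theoretic one.
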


\begin{proof}
We denote by $\widehat{B_{ij}}$ the neutral component of
$\PB_i\cap \PB_j$ and let $Z_{ij}$ be
the image of $Z\hookrightarrow B\rightarrow B_{ij}$. Then,  the induced morphism $B\rightarrow B_i\times_{B_{ij}}B_j$ is an isogeny since $\PB_i+\PB_j=\PB$. We have the following commutative diagram:
\begin{eqnarray*}
\xymatrix{
Z\ar@{^{(}->}[r]\ar@{->}[d] & B\ar[d]^{\mathrm{isogeny}}\\
Z_i\times_{Z_{ij}}Z_j\ar@{^{(}->}[r]& B_i\times_{B_{ij}}B_j.}
\end{eqnarray*}

Hence $Z$ is an irreducible component of an \'etale cover of $Z_i\times_{Z_{ij}}Z_j$.

 Let $R_{i}$ (resp. $R_{ij}$) be the $\kappa$-kernel of $Z_i$ (resp. $Z_{ij}$).
 Since  $Z$ is of general type,
a general fiber of $Z \to Z_j$  is of general type, and hence the general fiber of $Z_i\rightarrow Z_{ij}$ is also of general type.

 Then the composition of morphisms $R_{i}\rightarrow B_i\rightarrow B_{ij}$
is an isogeny onto its image $R_{i}'$ and  $R_i'+Z_{ij}=Z_{ij}$. We denote by $B_{ij}'$ the quotient $B_{ij}/R_i'$ and $Z_{ij}'=Z_{ij}/R_i'$. Note that $B_i^\flat=B_i/R_i$. Hence
$$\begin{array}{ll} \dim {B_i^\flat} + \dim { B_j} -\dim {B'_{ij}}  &= \dim B_i-\dim R_i+\dim B_j-(\dim B_{ij}-\dim R_{i}') \\
& =\dim B_i+\dim B_j-\dim B_{ij}=\dim B.\end{array}$$

It follows that  the natural surjective morphism $B\rightarrow B^\flat_i\times_{B'_{ij}}B_j$ is again an isogeny and $Z$ is an irreducible component of the inverse image under this isogeny of $Z^\flat_i\times_{Z'_{ij}} Z_j$. Hence $\widehat{B^\flat_i}+\PB_j=\PB$.

 We  apply the same argument to $B_j$ and $B^\flat_i$.
 We then conclude as before that the morphism $R_j\rightarrow B'_{ij}$ is also an isogeny onto its image.  In particular, $\widehat{B^\flat_i}+\widehat{B^\flat_j}=\PB$.
\end{proof}

\begin{prop}\label{components}
Let $Z\hookrightarrow B$ be a subvariety of general type
of codimension $s\geq 2$. Assume that $Z$ generates $B$ and there exists a Hodge sheaf $\cF$ supported on $Z$ with $\chi(Z, \cF)=1$. Then,
there exist at least $s$ components $T_i=P_i+\PB_i$ of $V^1(\cF)$, $1\leq i\leq s$,
such that $\PB_i+\PB_j=\PB$ for any $i\neq j$. We call this collection of components the essential components of $V^1(\cF)$.
\end{prop}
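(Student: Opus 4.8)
The plan is to induct on the codimension $s$, but to prove the slightly stronger statement allowing $s \geq 1$, with the convention that for $s = 1$ we only assert $V^1(\cF) \neq \emptyset$ (the pairwise condition being vacuous). Since $Z$ is of general type, Lemma \ref{basic}(1) shows $\cF$ is M-regular, so Lemma \ref{pareschi} gives $V^1(\cF) \neq \emptyset$. This disposes of the base case $s = 1$ and, for larger $s$, furnishes a starting component $T_1 = P_1 + \PB_1$. Note that M-regularity forces $\codim_{\PB}\PB_i \geq 2$ for every component $T_i$ of $V^1(\cF)$; in particular $\PB_1 \neq \PB$, a fact I will use for distinctness.

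For the inductive step I fix the component $T_1$ and pass to the diagram (\ref{diagram-fibration}) with $i = 1$. By Lemma \ref{fibration} the $\kappa$-reduction $Z_1^\flat \subset B_1^\flat$ has codimension $s - 1$; it is of general type, and it generates $B_1^\flat$ because $Z$ generates $B$ and $Z_1^\flat$ is the image of $Z$ under the successive quotients $B \to B_1 \to B_1^\flat$. Choosing a general torsion point $Q$ and setting $\cF_Q = h_{1*}^\flat(\cF \otimes Q)$, the proof of Lemma \ref{correspondence} shows that $\cF_Q$ is a Hodge sheaf supported on $Z_1^\flat$ with $\chi(Z_1^\flat, \cF_Q) = 1$. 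Thus the induction hypothesis applies to $Z_1^\flat \hookrightarrow B_1^\flat$ and yields $s - 1$ components $\PB_2', \dots, \PB_s'$ of $V^1(\cF_Q)$ with $\PB_i' + \PB_j' = \widehat{B_1^\flat}$ for $i \neq j$.

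I then transport these components back using the bijection $S_2 \to S_1$ of Lemma \ref{correspondence}: each $\PB_i'$ equals $\widehat{B^1_{k_i}}$, the neutral component of $\PB_{k_i} \cap \widehat{B_1^\flat}$, for a unique component $T_{k_i}$ of $V^1(\cF)$ lying in $S_1$, i.e. with $\PB_{k_i} + \widehat{B_1^\flat} = \PB$. Together with $T_1$ these are the claimed $s$ components, and the pairwise genericity is a containment chase. For $2 \leq i \neq j \leq s$ one has $\PB_{k_i} + \PB_{k_j} \supseteq \widehat{B^1_{k_i}} + \widehat{B^1_{k_j}} = \widehat{B_1^\flat}$ and $\PB_{k_i} + \PB_{k_j} \supseteq \PB_{k_i}$, whence $\PB_{k_i} + \PB_{k_j} \supseteq \PB_{k_i} + \widehat{B_1^\flat} = \PB$; and since $\widehat{B_1^\flat} \subseteq \PB_1$ (the dual inclusion to the quotient $B \to B_1^\flat$), for every $j$ we get $\PB_1 + \PB_{k_j} \supseteq \widehat{B_1^\flat} + \PB_{k_j} = \PB$.

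It remains to confirm that the $s$ components are distinct. As noted, $\codim_{\PB}\PB_1 \geq 2$ gives $\PB_1 \neq \PB$, so $T_1 \notin S_1$ and $T_1$ differs from each $T_{k_i}$; injectivity of the bijection reduces the remaining distinctness to $\PB_i' \neq \PB_j'$, which holds because $\cF_Q$ is M-regular, forcing $\PB_i' \neq \widehat{B_1^\flat}$, so two equal components could not sum to $\widehat{B_1^\flat}$. The main obstacle is organizing this dictionary between components on $B$ and on $B_1^\flat$: one must verify the hypotheses of Lemma \ref{correspondence} for $\cF_Q$ and check that ``sums filling up $\widehat{B_1^\flat}$'' downstairs genuinely lift to ``sums filling up $\PB$'' upstairs. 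Once the containments above are arranged this is routine; it is worth noting that for this particular chain of implications Lemma \ref{complement} is not required.
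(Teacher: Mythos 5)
Your argument is correct and is essentially the paper's own proof: induct on $s$, use M-regularity (Lemma \ref{basic}(1)) and Lemma \ref{pareschi} to produce a first component, pass to $Z_1^\flat\hookrightarrow B_1^\flat$ of codimension $s-1$ with the Hodge sheaf $\cF_Q$, apply the induction hypothesis there, and lift the essential components back through the bijection of Lemma \ref{correspondence}, finishing with the same containment chase $\PB_i+\PB_j\supseteq \PB_{i}+\widehat{B_1^\flat}=\PB$. The only difference is cosmetic: you start the induction at $s=1$ and spell out distinctness (which in any case follows from the pairwise sum condition together with $\codim_{\PB}\PB_i\geq 2$), whereas the paper starts at $s=2$ and leaves distinctness implicit.
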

\begin{proof}
Note that $\codim_{B^\flat_1}Z^\flat_1=s-1$ by Lemma \ref{fibration}, $Z^\flat_1$  generates $B^\flat_1$, and $\chi(Z^\flat_1, \cF_Q)=1$ for the Hodge sheaf $\cF_Q$ as in Lemma \ref{correspondence}.

 We run induction on $s$. When $s=2$, we know that $V^1(\cF_Q)\neq \emptyset$, hence by the correspondence in Lemma \ref{correspondence}, there exists $T_2$ such that $q(\PB_2)=\PB/\widehat{B_1^\flat}$. Then $\PB_1+\PB_2=\PB$.

When $s\geq 3$, by induction, there exist $s-1$ essential components of $V^1(\cF_Q)$ corresponds to subvarieties: $\widehat{B^1_2}, \ldots, \widehat{B^1_s}$ such that $ \widehat{B^1_i} +\widehat{B^1_j} =\widehat{B^\flat_1}$ for any $2 \le i \ne j \le s$.

%

By the  bijection in Lemma \ref{correspondence} , there exist correspondingly $s-1$ components of $V^1(\cF)$: $T_2=P_2+\PB_2,\ldots, \PT_s=P_s+\PB_s$.
Since $q(\PB_i)=\PB/\widehat{B_1^\flat}$ for each $2\leq i\leq s$, $\PB_i+\widehat{B_1^\flat}=\PB$ and hence $\PB_i+\PB_1=\PB$.  Moreover, as $\widehat{B^1_i} +\widehat{B^1_j} =\widehat{B^\flat_1}$, one has
$\PB_i+\PB_j=\PB$ for  $2\leq i<j\leq s$.
\end{proof}

\begin{theo}\label{subvariety}
Let $Z\hookrightarrow B$ be a subvariety of general type
of codimension $s\geq 2$.
Assume that $Z$ generates $B$ and that there exists a Hodge sheaf $\cF$ supported on $Z$ with $\chi(Z, \cF)=1$.
Fix $s$ essential components
$T_i=P_i+\PB_i$ of $V^1(\cF)$, $1\leq i\leq s$ satisfying $\PB_i+\PB_j=\PB$ for any $i\neq j$.
Let $\widehat{U_i}={\big(\bigcap_{j\neq i}\widehat{B_j}\big)_0}$ be the neutral component of
$\big(\bigcap_{j\neq i}\widehat{B_j} \big)$.
Let $K_i:=U^\flat_i$ (c.f. the notations in Section \ref{notation}).
Then we have the followings:
\begin{itemize}
\item[(1)]  $\sum_{1\leq j\leq s}\PK_j=\PB$ and $\sum_{j\neq i}\PK_j=\widehat{B^\flat_i}$ for each $i$;
\item[(2)] the image of the composition of morphisms $Z\hookrightarrow B\twoheadrightarrow K_i$ is an ample divisor $D_i$ of $K_i$ for each $i$.
\end{itemize}
\end{theo}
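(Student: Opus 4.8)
The plan is to induct on the codimension $s$. The base case $s=1$ is the fact, recalled just before Lemma \ref{fibration}, that a codimension-one subvariety of general type which generates $B$ is an ample divisor; here $K_1=B$ and $D_1=Z$, so both assertions are immediate. For the inductive step I would use Lemma \ref{correspondence} as a descent engine: fixing a pivot index $i'$, it passes from $(\cF \text{ on } Z\hookrightarrow B)$ to a Hodge sheaf $\cF_Q$ on $Z_{i'}^\flat\hookrightarrow B_{i'}^\flat$, where by Lemma \ref{fibration} the codimension has dropped to $s-1$, $Z_{i'}^\flat$ generates $B_{i'}^\flat$, and $\chi(\cF_Q)=1$, so that the induction hypothesis (both assertions in codimension $s-1$) applies inside $B_{i'}^\flat$.

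The core of the argument is to recognize the data produced downstairs as the data of the original problem. First, Lemma \ref{complement} gives $\widehat{B_k^\flat}+\widehat{B_{i'}^\flat}=\PB$, and since $\widehat{B_k^\flat}\subseteq\PB_k$ this upgrades to $\PB_k+\widehat{B_{i'}^\flat}=\PB$; thus every essential component $T_k$ with $k\neq i'$ lies in the set $S_1$ of Lemma \ref{correspondence} and descends to a component $\widehat{B_k^{i'}}=(\PB_k\cap\widehat{B_{i'}^\flat})_0$ of $V^1(\cF_Q)$. Next, writing $\widehat{W}=\big(\bigcap_{k\neq i',\,k\neq j}\PB_k\big)_0$, one has $\widehat{U_j}=(\PB_{i'}\cap\widehat{W})_0$ while the subgroup built downstairs is $\widehat{U_j^{(i')}}=(\widehat{B_{i'}^\flat}\cap\widehat{W})_0$; Lemma \ref{easy} then shows that the $\kappa$-reduction of $Z$ with respect to $\widehat{U_j^{(i')}}$ equals that with respect to $\widehat{U_j}$. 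Combining this with the associativity of $\kappa$-reduction along the tower $B\to B_{i'}^\flat\to\cdots$ (and the fact that $Z_{i'}^\flat$ is the image of $Z$) identifies the downstairs abelian variety $K_j^{(i')}$ with $K_j$ and the downstairs image with $D_j$, for every $j\neq i'$.

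Granting that the descent is legitimate for each choice of pivot (discussed below), the two assertions assemble quickly. For (2): to see that $D_j$ is ample in $K_j$ I would descend with any pivot $i'\neq j$, whereupon $D_j=D_j^{(i')}$ and $K_j=K_j^{(i')}$ and the induction hypothesis supplies ampleness. For (1): the first assertion of the induction hypothesis at level $s-1$ reads $\sum_{j\neq i'}\PK_j=\widehat{B_{i'}^\flat}$, which is precisely the leave-one-out relation of (1) for the index $i'$; ranging over all pivots yields $\sum_{j\neq i}\PK_j=\widehat{B_i^\flat}$ for every $i$. Finally $\sum_{j}\PK_j=\PB$ follows by fixing two distinct indices $i\neq i'$: both $\widehat{B_i^\flat}=\sum_{k\neq i}\PK_k$ and $\widehat{B_{i'}^\flat}=\sum_{k\neq i'}\PK_k$ are contained in $\sum_k\PK_k$, and $\widehat{B_i^\flat}+\widehat{B_{i'}^\flat}=\PB$ by Lemma \ref{complement}.

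The main obstacle is exactly the legitimacy of the descent with an arbitrary pivot $i'$: to invoke the induction hypothesis I must know that the descended components $\widehat{B_k^{i'}}$ are again essential one codimension down, i.e. that $\widehat{B_k^{i'}}+\widehat{B_l^{i'}}=\widehat{B_{i'}^\flat}$ for distinct $k,l\neq i'$. A dimension count (using $\PB_a+\widehat{B_{i'}^\flat}=\PB$) shows this is equivalent to the ``triple'' relation $(\PB_k\cap\PB_l)_0+\widehat{B_{i'}^\flat}=\PB$ for distinct $i',k,l$, which is strictly stronger than surjectivity of $\widehat{B_{i'}^\flat}$ onto each of $\PB/\PB_k$ and $\PB/\PB_l$ and therefore does not follow formally from the pairwise hypotheses $\PB_a+\PB_b=\PB$ alone; it has to be extracted from the fibration geometry of $Z$ in the spirit of the proof of Lemma \ref{complement}, together with Lemma \ref{easy} and Lemma \ref{easyprop}. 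The remaining inputs are routine: the associativity of $\kappa$-reduction used above, and the ampleness criterion of the base case (a divisor of general type generating an abelian variety is ample, since otherwise its associated line bundle $\cO(D)$ would descend to a proper quotient, forcing $D$ to be invariant under a positive-dimensional subtorus and contradicting general type).
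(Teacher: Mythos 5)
Your proposal is essentially the paper's own proof: the same induction on the codimension, the same descent to $Z_{i'}^\flat\hookrightarrow B_{i'}^\flat$ via Lemma \ref{correspondence} (with membership in the set $S_1$ supplied by Lemma \ref{complement}), the same identification $K_j^{(i')}=K_j$ via Lemma \ref{easy}, and the same assembly of assertion (1) from the leave-one-out relations together with $\widehat{B_i^\flat}+\widehat{B_{i'}^\flat}=\PB$. (The paper starts the induction at $s=2$, where $K_1=B_2^\flat$, $K_2=B_1^\flat$ and ampleness of $D_i$ comes from Lemma \ref{fibration}; this is the same computation as your $s=1$ base case.)

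The one substantive remark concerns the obstacle you flag, which is indeed the crux: your dimension count correctly reduces the legitimacy of the descent to the triple relation $(\PB_k\cap\PB_l)_0+\widehat{B_{i'}^\flat}=\PB$ for pairwise distinct indices. The paper disposes of exactly this step in a single sentence --- ``Since $\PB_j+\PB_k=\PB$ for any $j\neq k$, we have $\PB_j^i+\PB_k^i=\widehat{B^\flat_i}$'' --- with no further argument, and as you observe this is not a formal consequence of the pairwise hypotheses (a two-dimensional linear-algebra example already defeats the formal implication). What saves the argument, at least for the pivot $i'=1$, is that the essential components are not arbitrary pairwise-complementary components but the specific collection built in Proposition \ref{components}: there $T_2,\dots,T_s$ are by construction the preimages, under the bijection of Lemma \ref{correspondence}, of the essential components of $V^1(\cF_Q)$, so their descents are essential downstairs by fiat. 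For the other pivots, which the proof of (1) genuinely requires, the same issue recurs and must be settled geometrically in the spirit of the proof of Lemma \ref{complement} (showing that the $\kappa$-kernel of $Z_{i'}$ maps isogenously into $B_{kl}$), as you suggest; neither your write-up nor the paper actually carries this out. So you have reconstructed the intended argument and, in the process, put your finger on its thinnest point rather than introduced a new error.
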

\begin{proof}
We will prove by induction on $s\geq 2$. If $s=2$, $U_1=B_2$ and $U_2=B_1$.
Hence $K_1=B^\flat_2$  and $K_2=B^\flat_1$ then we are done.  

 We assume that the statement of Theorem \ref{subvariety} holds when the codimension of the subvariety in abelian variety is at most $s-1$. As in Lemma \ref{correspondence}, we consider $\cF_Q^i=h^\flat_{i*}(\cF\otimes Q)$, for a general torsion $Q\in \PB$. Then $\cF_Q^i$ is a Hodge sheaf on $Z_i^\flat$ with $\chi(Z_i^\flat, \cF_Q^i)=1$.

Indeed, for any $i$,
by the bijection in Lemma \ref{correspondence}, each $T_j$ for $j\neq i$ corresponds to a component $T_j^i=P_j^i+\widehat{B_j^i}$ of $V^1(\cF_Q^i)$. 
Since $\PB_j+\PB_k=\PB$ for any $j\neq k$, we have $\PB_j^i+\PB_k^i=\widehat{B^\flat_i}$, for any $i$, $j$, $k$ pairwise distinct.

Since $\codim_{B^\flat_i}Z^\flat_i=s-1$, by induction, for each $t\neq i$, let
 $$\widehat{U_t^i}:={\big(\bigcap_{j\neq i, t}\widehat{(B_j^i)} \big)_0},$$ and $K^i_t=(U^i_t)^\flat$.
 Then by induction hypotheis, one has
\begin{itemize}
 \item[(1')]  $\sum_{t\neq i}\widehat{K_t^i}=\widehat{B^\flat_i}$;
 \item[(2')] the image of the composition of morphisms $Z\hookrightarrow B\twoheadrightarrow K_t^i$ is an ample divisor.
\end{itemize}
On the other hand, we have
\begin{eqnarray*}
\widehat{U_t^i} ={\big(\bigcap_{j\neq i, t}\widehat{B_j^i}\big)_0}
= {\big((\bigcap_{j\neq i, t}\widehat{B_{j}})\cap \PB_i^\flat \big)_0}.
\end{eqnarray*}
Since $\widehat{U_t}={\big(\bigcap_{j\neq t}\widehat{B_j}\big)_0}=\big((\bigcap_{j\neq i, t}\widehat{B_{j}})\cap \PB_i\big)_0$,
by Lemma \ref{easy}, we have
\begin{eqnarray*}
 K^i_t=(U_t^i)^\flat=U_t^\flat=K_t.
\end{eqnarray*}

Hence, by
$(1')$, $\sum_{j\neq i}\PK_j={B^\flat_i}$ for each $j\neq i$. Then $\sum_{1\leq j\leq s}\PK_j=\PB$. We deduce $(2)$ from $(2')$.
 \end{proof}

\begin{rema}\label{remark}
 We have actually proved that $Z$ is an irreducible component of an \'etale cover of certain fibre product of the ample divisors $D_i$ of $K_i$, $1\leq i\leq s$.
 \end{rema}

 In general, we can not expect that $\dim (\PK_i\cap \widehat{B^\flat_i})=0$ for $1\leq i\leq s$ and that $Z$ is an \'etale cover of the product of these $D_i$.
 However, we have this nice picture in some special cases.
 \begin{lemm}\label{dim=2}
  Under the assumption of Theorem \ref{subvariety}, if $\dim K_i=2$, then $\dim (\PK_i\cap \widehat{B^\flat_i})=0$ and hence we have a commutative diagram
  \begin{eqnarray*}
\xymatrix{
 Z\ar@{^{(}->}[r]\ar@{->>}[d] & B\ar[d]^{\rho}_{\mathrm{isogeny}}\\
 D_i\times Z_i \ar@{^{(}->}[r] & K_i\times B_i.}
\end{eqnarray*}
 \end{lemm}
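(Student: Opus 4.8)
The plan is to reduce the entire statement to one numerical inequality on dual abelian varieties, feeding in the sum decomposition of Theorem \ref{subvariety}(1) together with the M-regularity of $\cF$. Combining the two identities of Theorem \ref{subvariety}(1), namely $\sum_{1\le j\le s}\PK_j=\PB$ and $\sum_{j\ne i}\PK_j=\widehat{B_i^\flat}$, one gets $\PK_i+\widehat{B_i^\flat}=\PB$. Writing $\PW:=(\PK_i\cap\widehat{B_i^\flat})_0$ for the neutral component of the intersection, the standard dimension formula for abelian subvarieties whose sum is everything gives
$$\dim\PW=\dim\PK_i+\dim\widehat{B_i^\flat}-\dim\PB=\dim K_i-(\dim B-\dim B_i^\flat).$$
So the assertion $\dim(\PK_i\cap\widehat{B_i^\flat})=0$ is exactly the inequality $\dim B-\dim B_i^\flat\ge\dim K_i=2$.

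Next I would bound $\dim B-\dim B_i^\flat$ from below. Factoring the quotient as $B\twoheadrightarrow B_i\twoheadrightarrow B_i^\flat$ yields $\dim B-\dim B_i^\flat\ge\dim B-\dim B_i=\codim_{\PB}\PB_i$, the last equality because $\ker(B\to B_i)$ is dual to $\PB/\PB_i$. Since $Z$ is of general type, Lemma \ref{basic}(1) makes $\cF$ M-regular, so every component of $V^1(\cF)$ has codimension at least $2$ in $\PB$; in particular the essential component $T_i=P_i+\PB_i$ does, giving $\codim_{\PB}\PB_i\ge 2$. Hence $\dim B-\dim B_i^\flat\ge 2$, and the displayed formula forces $\dim\PW\le 0$, i.e. $\dim\PW=0$.

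Finally I would read the diagram off the equality case. Forcing $\dim\PW=0$ turns both inequalities above into equalities: $\codim_{\PB}\PB_i=2$ and $\dim B_i=\dim B_i^\flat$, so the connected $\kappa$-kernel $R_{B_i}=\ker(B_i\to B_i^\flat)$ vanishes and $B_i=B_i^\flat$; in particular $Z_i\subset B_i$ is already of general type and $\PB_i=\widehat{B_i^\flat}$. Then $\PK_i+\PB_i=\PB$ with $\PK_i\cap\PB_i$ finite, so dually $\rho\colon B\to K_i\times B_i$ is an isogeny. It carries $Z$ into $D_i\times Z_i$, and since $\rho|_Z$ is finite while
$$\dim Z=\dim B-s=(\dim K_i-1)+(\dim B_i-(s-1))=\dim(D_i\times Z_i)$$
and $D_i\times Z_i$ is irreducible, the map $Z\to D_i\times Z_i$ is surjective, which gives the commutative square.

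I do not expect a real obstacle: the argument is purely numerical once Theorem \ref{subvariety}(1) is available. The one point worth care is that the hypothesis $\dim K_i=2$ exactly saturates the M-regularity bound $\codim_{\PB}\PB_i\ge 2$, and it is this saturation that both kills the intersection and, crucially, forces $B_i=B_i^\flat$, so the target of $\rho$ may be written with $B_i$ rather than with its $\kappa$-reduction. No finer geometry of the ample curve $D_i$ inside the surface $K_i$ enters.
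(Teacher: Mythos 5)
Your proof is correct and follows essentially the same route as the paper's (much terser) argument: combine $\PK_i+\widehat{B^\flat_i}=\PB$ from Theorem \ref{subvariety}(1) with the M-regularity bound $\codim_{\PB}\PB_i\ge 2$ and the containment $\widehat{B^\flat_i}\subseteq\PB_i$ to force $\widehat{B^\flat_i}=\PB_i$ and a zero-dimensional intersection. The only difference is that you make explicit the equality analysis and the dimension count proving surjectivity of $Z\to D_i\times Z_i$, which the paper leaves implicit.
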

\begin{proof}
 We know that $\PK_i+\widehat{B^\flat_i}=\PB$ by Theorem \ref{subvariety} and $\codim_{\PB}\PB_i\geq 2$. If $\dim \PK_i=\dim K_i=2$, then $\widehat{B^\flat_i}=\PB_i$
 and $\dim (\PK_i\cap \widehat{B^\flat_i})=0$.
\end{proof}

\begin{coro}\label{dimension}
Under the assumption of Theorem \ref{subvariety}, we have $2\dim Z\geq \dim B$.

Assume that $2\dim Z = \dim B$.
Pick $s=\codim_B Z$ essential components $\PT_i$ of $V^1(\cF)$.
For each $\PK_i$ defined in Theorem \ref{subvariety}, we have $\dim \PK_i=2$  and we have a commutative diagram:
\begin{eqnarray}\label{extremecase1}
\xymatrix{
 Z\ar@{^{(}->}[r]\ar@{->>}[d] & B\ar[d]^{\rho}_{\mathrm{isogeny}}\\
 D_1\times\cdots\times D_s \ar@{^{(}->}[r] & K_1\times\cdots\times K_s.}
\end{eqnarray}

\end{coro}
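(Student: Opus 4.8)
The plan is to induct on the codimension $s$, proving simultaneously the inequality and the extremal structure. The base case $s=1$ is the elementary fact that a subvariety of general type of codimension one in an abelian variety is an ample divisor, hence lives in an abelian variety of dimension at least $2$; in the extremal situation $\dim Z=1$ the ambient surface is exactly the $K_1$ with $\dim K_1=2$. For the inductive step I would fix one essential component $T_1=P_1+\PB_1$ and pass to the $\kappa$-reduction $Z_1^\flat\hookrightarrow B_1^\flat$. By Lemma \ref{fibration} this has codimension $s-1$; it generates $B_1^\flat$ and is of general type, and by Lemma \ref{correspondence} together with Lemma \ref{basic} it carries the Hodge sheaf $\cF_Q=h_{1*}^\flat(\cF\otimes Q)$ with $\chi(Z_1^\flat,\cF_Q)=1$. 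Thus the inductive hypothesis applies to $Z_1^\flat\hookrightarrow B_1^\flat$, whose essential components are, via the bijection of Lemma \ref{correspondence}, the $T_t$ with $t\neq 1$.

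First I would establish the inequality by a dimension count. Writing $c_1=\codim_{\PB}\PB_1$ and $R_{B_1}=\ker(B_1\to B_1^\flat)$, Lemma \ref{fibration} shows that the general fibre of $Z\to Z_1$ is a divisor in the fibre of $B\to B_1$, of dimension $c_1-1$, while the fibre of $Z_1\to Z_1^\flat$ has dimension $\dim R_{B_1}$; hence $\dim Z=\dim Z_1^\flat+(c_1-1)+\dim R_{B_1}$. The crucial point is that, since $Z$ is of general type, $\cF$ is $M$-regular by Lemma \ref{basic}(1), so every component of $V^1(\cF)$ has codimension at least $2$; in particular $c_1\geq 2$. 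Together with the inductive bound $\dim Z_1^\flat\geq s-1$ and $\dim R_{B_1}\geq 0$ this gives $\dim Z\geq(s-1)+1=s$, that is $2\dim Z\geq\dim B$.

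Next I would treat the equality case $2\dim Z=\dim B$, where $\dim Z=s$. Then every inequality above must be an equality: $\dim Z_1^\flat=s-1$, $c_1=2$, and $R_{B_1}=0$, whence $B_1^\flat=B_1$ and $2\dim Z_1^\flat=2s-2=\dim B_1^\flat$. So $Z_1^\flat$ falls in the extremal case of the inductive hypothesis, and combining its conclusion with the identification $K_t^1=K_t$ established in the proof of Theorem \ref{subvariety} (an application of Lemma \ref{easy}) yields $\dim K_t=2$ for every $t\neq 1$. Repeating the argument with $T_2$ as pivot yields $\dim K_t=2$ for every $t\neq 2$, in particular $\dim K_1=2$; hence $\dim K_i=2$ for all $i$. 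Consequently $\sum_i\dim\PK_i=2s=\dim B$, while $\sum_i\PK_i=\PB$ by Theorem \ref{subvariety}(1); since the dual map $\prod_i\PK_i\to\PB$ is then surjective between abelian varieties of the same dimension, it is an isogeny, and therefore so is the product map $\rho\colon B\to K_1\times\cdots\times K_s$. As $q_i(Z)=D_i$ we have $\rho(Z)\subseteq D_1\times\cdots\times D_s$; finiteness of $\rho$ forces $\dim\rho(Z)=\dim Z=s=\sum_i(\dim K_i-1)=\dim(D_1\times\cdots\times D_s)$, and both sides are irreducible, so $\rho(Z)=D_1\times\cdots\times D_s$. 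This is precisely the diagram (\ref{extremecase1}).

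The main obstacle is the equality case. The two natural dimension counts both yield only $\sum_i\dim K_i\geq\dim B$, so one cannot bound $\sum_i\dim K_i$ from above by a direct computation, and in particular cannot see immediately that each $\dim K_i=2$; the pure linear-algebra data of the essential components (subspaces of codimension $2$ with pairwise sums filling $\PB$) is genuinely insufficient, since it already permits $\dim B<2s$ when $s\geq 3$. The resolution is to propagate the extremal structure through the induction and to exploit the symmetry among the essential components by pivoting on two different $T_i$, using the compatibility $K_t^i=K_t$ from Theorem \ref{subvariety} and Lemma \ref{easy}; this is what pins down $\dim K_i=2$ for every $i$ and hence the isogeny.
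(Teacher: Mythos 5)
Your proof is correct and follows essentially the same route as the paper's: induction on $s$, the bound $\dim Z\geq\dim Z_1^\flat+1\geq s$ via Lemma \ref{fibration} and $M$-regularity, and in the extremal case forcing $Z_i^\flat\hookrightarrow B_i^\flat$ into the extremal case of the inductive hypothesis and pivoting over the different essential components (with the identification $K_t^i=K_t$) to get $\dim K_i=2$ for all $i$ and hence the isogeny. Your final dimension count making $\rho$ an isogeny and $\rho(Z)=D_1\times\cdots\times D_s$ just spells out what the paper leaves implicit via Lemma \ref{dim=2}.
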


\begin{proof}
We prove by induction on $s$. If $s=1$, since $V^1(\cF)\neq \emptyset$ and a codimension-$(s+1)$
component of $V^1(\cF)$ is a component of $V^s(\cF)$,
we conclude that $B$ is an abelian surface and we are done.

In general, we take $\PT_1=P_1+\PB_1$ a component of $V^1(\cF)$. Assume that $\dim Z_1^\flat=n-s$ and $\dim B_1^\flat=\dim Z-s-1$
for $s\geq 1$.
As for $Q\in \PB$ general, $\cF_Q^1=h_{1*}^\flat(\cF\otimes Q)$ is a Hodge sheaf on $Z_1^\flat$ with $\chi(Z_1^\flat, \cF_Q^1)=1$.
By induction, one has $$\dim Z_1^\flat \ge \textrm{codim}_{B_1^\flat} Z_1^\flat = \textrm{codim}_{B_1} Z_1 =s-1,$$ where the last equality follows from Lemma \ref{fibration}. Since $\dim Z > \dim Z_1^\flat$, one has $\dim Z \ge s$ immediately.

Assume that $\dim B=2s$.
For the $s$ components $\PT_i=P_i+\PB_i$ of $V^1(\cF)$, by the same argument as above,
we see that $\dim Z_i^\flat=s-1$ and $\dim B_i^\flat=\dim B_i=2s-2$.
Hence, by induction, each $Z_i^\flat$ has the structure as in (\ref{extremecase1}).
Hence $\dim K_i=2$ and $\dim(\PK_i\cap \PB_i)=0$  for each $1\leq i\leq s$.
We then have the diagram (\ref{extremecase1}).
\end{proof}

\begin{coro}\label{inequality}
Use the same assumptions of Theorem \ref{subvariety} and furthermore assume that
$B$ has $k$ simple factors. Then $s=\codim_BZ\leq k$.
If $s=\codim_BZ= k$, we have a commutative diagram:
\begin{eqnarray}\label{extremecase}
\xymatrix{
 Z\ar@{^{(}->}[r]\ar@{->>}[d] & B\ar[d]^{\rho}_{\mathrm{isogeny}}\\
 D_1\times\cdots\times D_s \ar@{^{(}->}[r] & K_1\times\cdots\times K_s,}
\end{eqnarray}
where $D_i\hookrightarrow K_i$ is an ample divisor for each $1\leq i\leq s$.
In particular $Z$ is an irreducible component of $\rho^{-1}(D_1\times\cdots\times D_s)$.
\end{coro}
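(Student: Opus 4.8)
The plan is to extract from Theorem~\ref{subvariety} the purely isogeny-theoretic data carried by the abelian subvarieties $\widehat{K_1},\dots,\widehat{K_s}$ of $\PB$, and then to run a counting argument in the isogeny category of abelian varieties, where Poincar\'e complete reducibility makes the ``number of simple factors'' a well-behaved additive length function $\ell$. By Theorem~\ref{subvariety}(1) we have $\sum_{j}\widehat{K_j}=\PB$ and $\sum_{j\neq i}\widehat{K_j}=\widehat{B_i^\flat}$ for every $i$. Since $\cF$ is M-regular (by Lemma~\ref{basic}(1), as $Z$ is of general type), each essential component $T_i=P_i+\PB_i$ of $V^1(\cF)$ has codimension at least $2$ in $\PB$, so $\PB_i\subsetneq\PB$; as $\widehat{B_i^\flat}\subset\PB_i$ this gives $\widehat{B_i^\flat}\subsetneq\PB$, i.e. $\widehat{K_i}\not\subset\sum_{j\neq i}\widehat{K_j}$ for every $i$. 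This independence property is the key input.

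First I would prove $s\le k$. Recall that $B$ and $\PB$ have the same number of simple factors $k=\ell(\PB)$, since duality preserves simplicity and respects products up to isogeny. Ordering the subvarieties and writing $W_{\le i}=\widehat{K_1}+\cdots+\widehat{K_i}$, the independence property yields $\widehat{K_i}\not\subset W_{\le i-1}$, so the neutral component of $\widehat{K_i}\cap W_{\le i-1}$ is a proper abelian subvariety of $\widehat{K_i}$ and hence has strictly smaller length. Additivity of $\ell$ under the isogeny decomposition then gives $\ell(W_{\le i})-\ell(W_{\le i-1})=\ell(\widehat{K_i})-\ell\big((\widehat{K_i}\cap W_{\le i-1})_0\big)\ge 1$, and telescoping over $i=1,\dots,s$ produces $s\le \ell\big(\sum_j\widehat{K_j}\big)=\ell(\PB)=k$, which is the asserted inequality.

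For the equality case $s=k$ every increment above must equal $1$. Taking $i=1$ forces $\ell(\widehat{K_1})=1$, so $\widehat{K_1}$ is simple; since the independence property is symmetric in the indices, reordering so that any chosen index comes first shows that every $\widehat{K_i}$ is simple. Consequently $\sum_i\ell(\widehat{K_i})=s=k=\ell(\PB)=\ell\big(\sum_i\widehat{K_i}\big)$, so the addition map $\widehat{K_1}\times\cdots\times\widehat{K_s}\to\PB$ is a surjection between objects of equal length in the semisimple isogeny category, hence an isogeny. Dualizing produces the isogeny $\rho\colon B\to K_1\times\cdots\times K_s$ of diagram~(\ref{extremecase}).

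It remains to identify $Z$. By Theorem~\ref{subvariety}(2) the image of $Z$ under each projection $B\to K_i$ is the ample divisor $D_i$, so $\rho(Z)\subseteq D_1\times\cdots\times D_s$; since $\rho$ is an isogeny and $\dim Z=\dim B-s=\sum_i(\dim K_i-1)=\dim(D_1\times\cdots\times D_s)=\dim\rho^{-1}(D_1\times\cdots\times D_s)$, the irreducible variety $Z$ is a component of $\rho^{-1}(D_1\times\cdots\times D_s)$, exactly as anticipated in Remark~\ref{remark}. The one genuinely delicate point, as I see it, is the replacement of dimension by the length $\ell$: a naive dimension count of the same telescoping only gives $s\le\dim B$, and it is the semisimplicity of the isogeny category together with the independence $\widehat{K_i}\not\subset\sum_{j\neq i}\widehat{K_j}$ that upgrades this to $s\le k$ and pins down the product structure at equality.
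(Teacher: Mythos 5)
Your proof is correct, but it follows a genuinely different route from the paper's. The paper establishes $s\le k$ by induction on $k$: it passes to $Z_1^\flat\hookrightarrow B_1^\flat$ equipped with the Hodge sheaf $\cF_Q^1$ of Lemma \ref{correspondence}, observes that $\codim_{B_1^\flat}Z_1^\flat=s-1$ (Lemma \ref{fibration}) while $B_1^\flat$, being a proper quotient of $B$, has at most $k-1$ simple factors, and then disposes of the equality case rather tersely, asserting that each $K_i$ is simple and that $\dim(\PK_i\cap\sum_{j\neq i}\PK_j)=0$. You instead take the output of Theorem \ref{subvariety} as a black box --- the identities $\sum_j\PK_j=\PB$ and $\sum_{j\neq i}\PK_j=\widehat{B_i^\flat}\subsetneq\PB$, the properness coming from M-regularity of $\cF$ forcing $\codim_{\PB}\PB_i\ge 2$ --- and run a one-shot length count in the semisimple isogeny category. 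Both arguments are sound, and your telescoping inequality is correctly set up (the additivity $\ell(W_{\le i})=\ell(W_{\le i-1})+\ell(\PK_i)-\ell\bigl((\PK_i\cap W_{\le i-1})_0\bigr)$ is exactly what Poincar\'e reducibility provides, and the independence $\PK_i\not\subset\sum_{j\neq i}\PK_j$ makes each increment at least $1$). What your version buys is a completely explicit treatment of the equality case: the forced increments of $1$, applied to every reordering, show directly that each $\PK_i$ is simple and that the addition map $\PK_1\times\cdots\times\PK_s\to\PB$ has kernel of length zero, hence is an isogeny, where the paper merely states these conclusions; it also isolates precisely which structural input (semisimplicity of the isogeny category plus the independence of the $\PK_i$) upgrades the trivial bound $s\le\dim B$ to $s\le k$. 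What the paper's induction buys is consistency with the inductive architecture of the whole section, since the reduction to $Z_1^\flat\hookrightarrow B_1^\flat$ is the same device used in Proposition \ref{components} and Theorem \ref{subvariety} themselves. The final identification of $Z$ as an irreducible component of $\rho^{-1}(D_1\times\cdots\times D_s)$ is the same dimension count in both treatments (the paper relegates it to Remark \ref{remark}).
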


\begin{proof}
We argue by induction on $k$. If $k=1$, all components of $V^1(\cF)$ are isolated points. By Lemma \ref{fibration},
$Z$ is a divisor of $B$.
In general, if $s>2$, we consider $Z_1^\flat\hookrightarrow  B_1^\flat$ with the Hodge sheaf $\cF^1_Q$ such that
$\chi(Z_1^\flat, \cF^1_Q)=1$. Note that
$\codim_{B_1^\flat}Z_1^\flat=s-1$ and $B_1^\flat$ has at most $k-1$ simple factors. Hence by induction, $s\leq k$.

If the equality holds, each $K_i$ is a simple abelian variety and $\dim (\PK_i\cap \sum_{j\neq i}\PK_j)=0$.
Then the natural morphism $\rho: B\rightarrow K_1\times\cdots\times K_s$ is an isogeny and we have (\ref{extremecase}).
\end{proof}
We then finish the proof of Theorem \ref{fibration-to-divisor}.
\begin{proof}
Let $X$ be a compact K\"ahler manifold with $p_g=1$. We consider the diagram (\ref{setting}). Note that
$f_*\omega_X$ is a Hodge sheaf on $B$ supported on $Z_X$ with $h^0(Z_X, f_*\omega_X)=p_g(X)=1$. Hence $\chi(Z_X, \cF)=1$. Then Theorem \ref{fibration-to-divisor}
 follows easily from Theorem \ref{subvariety} and Corollary \ref{dimension}.
\end{proof}

 \section{Strong Hodge sheaf $\cF$ supported on $Z$ with $\chi(\cF)=1$ and theta divisors}
The main goal of this section is to study the following problem, which is a generalization of question asked by Pareschi (see \cite[Question 4.6]{JLT}).

\begin{qu}\label{problem}
Let $Z\hookrightarrow B$ be a subvariety of an abelian variety and $Z$ generates $B$.
Assume that $Z$ is of general type and
there exists a {\it strong Hodge sheaf} $\cF$ on $Z$ such that $\chi(Z, \cF)=1$. Then does  there exist theta divisors
$\Theta_i$, $1\leq i\leq m$ and a birational morphism $t: Z':=\Theta_1\times\cdots\times \Theta_m\rightarrow Z$ such that $\cF=t_*(\omega_{Z'}\otimes Q)$ for some torsion line bundle $Q$ on $Z'$?
\end{qu}

The main results in \cite{JLT} state that we have a positive answer to the above question in  two special cases:
\begin{itemize}
\item[(1)] $\cF$ is the pushforward of the canonical sheaf of  a desingularization of $Z$ and $Z$ is smooth in codimension $1$;
\item[(2)] $\dim Z=\frac{1}{2}\dim B$ and $\cF=f_*\omega_X$, where $f: X\rightarrow Z$ is a surjective morphism from a smooth projective variety $X$ to $Z$.
\end{itemize}

Results in Section $3$ provide further evidences for a positive answer
and we will prove it in some other cases.

 \begin{lemm}\label{curve}
Assume that $Z$ is a smooth projective curve of genus at least $2$ and $\cF$ is a torsion-free sheaf on $Z$ such that $\cF\otimes \omega_Z^{-1}$ is weakly positive.  Then $\chi(Z, \cF)\geq 1$. If $\chi(Z, \cF)=1$, then $Z$ is a smooth projective curve of genus $2$ and $\cF=\omega_Z\otimes Q$ for some line bundle $Q\in \text{Pic}^0(Z)$.
If $\cF$ is a Hodge sheaf and $h^0(C, \cF)=1$, then $Q$ is a torsion line bundle.
\end{lemm}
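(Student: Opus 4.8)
The plan is to reduce the first two assertions to a Riemann--Roch computation on the curve, and to extract the torsion statement from the cohomological support loci using property $(P2)$. Since $Z$ is a smooth curve, the torsion-free sheaf $\cF$ is locally free; write $r=\rank\cF\geq 1$, $d=\deg\cF$ and $g=g(Z)$. Weak positivity of $\cF\otimes\omega_Z^{-1}$ forces this bundle to be nef, and in particular $\deg(\cF\otimes\omega_Z^{-1})=d-r(2g-2)\geq 0$. Feeding this into Riemann--Roch,
\[
\chi(Z,\cF)=d+r(1-g)\geq r(2g-2)+r(1-g)=r(g-1).
\]
As $g\geq 2$ and $r\geq 1$, the right-hand side is at least $1$, which gives the first assertion $\chi(Z,\cF)\geq 1$.

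For the equality case, $1=\chi(Z,\cF)\geq r(g-1)$ with both $r$ and $g-1$ positive integers forces $r=1$ and $g-1=1$; thus $\cF$ is a line bundle on a curve of genus $2$. Putting $r=1$, $g=2$ back into Riemann--Roch gives $d=2=\deg\omega_Z$, so $\deg(\cF\otimes\omega_Z^{-1})=0$, and hence $\cF\otimes\omega_Z^{-1}=Q$ for some $Q\in\Pic^0(Z)$, i.e. $\cF=\omega_Z\otimes Q$, as claimed.

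It remains to prove that $Q$ is torsion when $\cF$ is a Hodge sheaf, with ambient embedding $i\colon Z\hookrightarrow A$ into an abelian variety, and $h^0(Z,\cF)=1$. First I would note that $Z$ is of general type, so $\cF$ is M-regular by Lemma \ref{basic}(1); since $\chi(Z,\cF)=1$, Lemma \ref{pareschi} yields $V^1(\cF)\neq\emptyset$. Property $(P2)$ says each component of $V^1(\cF)$ is a torsion translate of an abelian subvariety of $\Pic^0(A)$, so $V^1(\cF)$ contains a torsion point $t$. On the other hand, for $P\in\Pic^0(A)$ Serre duality on $Z$ gives
\[
h^1(Z,\cF\otimes i^*P)=h^0(Z,\,\omega_Z\otimes\cF^{-1}\otimes i^*P^{-1})=h^0(Z,\,Q^{-1}\otimes i^*P^{-1}),
\]
which is nonzero precisely when $i^*P=Q^{-1}$. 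Applying this to $P=t$ shows $Q^{-1}=i^*t$; as $t$ is torsion and $i^*\colon\Pic^0(A)\to\Pic^0(Z)$ is a homomorphism of abelian varieties, $Q^{-1}$ and hence $Q$ is torsion.

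The numerical part is routine once one knows that weak positivity of a bundle on a curve implies non-negative degree. The step I expect to require the most care is the torsion statement: the delicate point is that the support locus must be realised inside $\Pic^0(A)$ of the \emph{ambient} abelian variety in order for $(P2)$ to apply, which is precisely why one first invokes M-regularity together with Lemma \ref{pareschi} to guarantee $V^1(\cF)\neq\emptyset$, rather than merely computing cohomology on $Z$, where the torsion structure of $Q$ is invisible.
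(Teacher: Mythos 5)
Your proof is correct and follows essentially the same route as the paper: weak positivity implies nefness on a curve, Riemann--Roch gives the numerical assertions, and property $(P2)$ on cohomological support loci gives the torsion statement. Your treatment of the last step is in fact slightly more careful than the paper's one-line argument, since you invoke Lemma \ref{pareschi} to guarantee that the relevant locus $V^1(\cF)\subset\Pic^0(A)$ is non-empty before concluding via Serre duality that $Q^{-1}$ is the restriction of a torsion point.
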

\begin{proof}
Since $\cF\otimes \omega_Z^{-1}$ is weakly positive on the smooth projective curve $Z$, it is nef.
In particular $\deg \cF \geq r(2g-2)$, where $r$ is the rank of $\cF$.
By the Riemann-Roch formula, $\chi(C, \cF) \geq r(g-1) \geq 1$.
If equality holds, $r=1, g=2, \deg \cF=2$.

If $\cF$ is a Hodge sheaf, then the cohomology support loci is a union of torsion translates of abelian subvarieties of $J(C)$.
So $Q$ has to be a torsion line bundle.
\end{proof}

\begin{theo}\label{products-curves}
Under the assumption of Question (\ref{problem}), assume moreover that $\dim B=2\dim Z$, then  Question (\ref{problem}) has an affirmative answer.
\end{theo}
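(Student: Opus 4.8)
The plan is to induct on $s=\dim Z=\tfrac12\dim B$, using the product structure already extracted in Corollary \ref{dimension} as the geometric backbone and Lemma \ref{curve} as the engine. By Corollary \ref{dimension}, after fixing $s$ essential components $T_i=P_i+\PB_i$ of $V^1(\cF)$, there is an isogeny $\rho\colon B\to K_1\times\cdots\times K_s$ with each $\dim K_i=2$ and each $D_i\subset K_i$ an ample divisor, and $Z$ is an irreducible component of $\rho^{-1}(D_1\times\cdots\times D_s)$. In particular each $D_i$ is a curve and $Z\to D_1\times\cdots\times D_s$ is \'etale. The goal is to upgrade this to: each $D_i$ is a smooth genus $2$ curve (hence a theta divisor of $K_i=JD_i$), $Z$ is birational to a product of theta divisors $Z'=\Theta_1\times\cdots\times\Theta_s$, and $\cF=t_*(\omega_{Z'}\otimes Q)$ for a torsion $Q$.

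For the base case $s=1$, $Z$ is a curve of general type generating the abelian surface $B$. Let $\epsilon\colon Z'\to Z$ be the normalization. Property $(P5)$ applied to $g=\id_B$ produces a torsion-free sheaf $\cF_0$ on the smooth projective curve $Z'$ with $\epsilon_*\cF_0=\cF$ and $\cF_0\otimes\omega_{Z'}^{-1}$ weakly positive; since $\epsilon$ is finite, $\chi(Z',\cF_0)=\chi(Z,\cF)=1$, and $Z'$ has genus $\ge 2$ because $Z$ is of general type. Lemma \ref{curve} then forces $g(Z')=2$ and $\cF_0=\omega_{Z'}\otimes Q$, and the Hodge-sheaf property $(P2)$, applied to the single-point locus $V^1(\cF_0)=\{-Q\}$, forces $Q$ to be torsion. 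Thus $Z'=\Theta_1$ is a theta divisor, $t=\epsilon$ is birational, and $\cF=\epsilon_*(\omega_{Z'}\otimes Q)$.

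For the inductive step, fix the essential component $T_1$ and form the $\kappa$-reduction $\cF_Q:=h^\flat_{1*}(\cF\otimes Q)$ on $Z_1^\flat\hookrightarrow B_1^\flat$ for general torsion $Q$, as in Lemma \ref{correspondence}. By Lemma \ref{basic} and the correspondence of essential components, $\cF_Q$ is a strong Hodge sheaf with $\chi(Z_1^\flat,\cF_Q)=1$, and Lemma \ref{fibration} gives $\codim_{B_1^\flat}Z_1^\flat=s-1$ with $\dim B_1^\flat=2(s-1)$; the inductive hypothesis then exhibits $Z_1^\flat$ as birational to a product $\Theta_2\times\cdots\times\Theta_s$ of theta divisors with $\cF_Q=t'_*(\omega\otimes Q')$. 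Meanwhile Lemma \ref{dim=2} gives $\dim(\PK_1\cap\PB_1)=0$, so $\rho$ splits off the factor $K_1$ and exhibits the fibration $h^\flat_1\colon Z\to Z_1^\flat$ as the pullback of the isogeny $B\to K_1\times B_1^\flat$ over $D_1\times Z_1^\flat$; hence it is isotrivial with fiber the curve $D_1$. Restricting $\cF$ to a general fiber and invoking $(P6)$ together with the base case shows $D_1$ (up to normalization) is a genus $2$ theta divisor $\Theta_1$.

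It remains to assemble the base and the fiber into a product. Because the fibration is isotrivial with fiber $\Theta_1$ and base birational to $\prod_{i\ge 2}\Theta_i$, $Z$ is birational to a quotient $(\widetilde{Z_1^\flat}\times\Theta_1)/G$ by the monodromy $G\subset\Aut(\Theta_1)$ of the pulled-back isogeny. The main obstacle is precisely to show that this twisted product is again birational to a product of theta divisors and that $\cF$ matches $t_*(\omega_{Z'}\otimes Q)$: I would control the finite monodromy using that the entire \'etale cover $Z\to\prod D_i$ is pulled back from the isogeny $\rho$, so its Galois group is abelian, and then combine the decomposition $(P3)$ with a K\"unneth/box-product computation, namely $\chi(\boxtimes_i(\omega_{\Theta_i}\otimes Q_i))=\prod_i\chi(\omega_{\Theta_i}\otimes Q_i)=1$ together with the weak positivity of $\cF_0\otimes\omega_{Z'}^{-1}$ from $(P5)$, to pin down $\cF_0=\omega_{Z'}\otimes Q$ on the normalization, exactly as in the special case $\cF=f_*\omega_X$ treated in \cite{JLT}. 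This recombination step, rather than the cohomological input, is where the real work lies.
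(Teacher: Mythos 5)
Your overall strategy --- induction on $s=\dim Z$ riding on the product decomposition of Corollary \ref{dimension}, with Lemma \ref{curve} and property $(P5)$ closing the base case --- is exactly the paper's, and your base case is essentially verbatim the paper's (pull $\cF$ back to the normalization via $(P5)$, apply Lemma \ref{curve}, get genus $2$ and $\omega\otimes(\text{torsion})$). Where you diverge is in how the induction is run, and that divergence is precisely what creates the gap you yourself flag. You apply the inductive hypothesis to the \emph{base} $Z_1^\flat$ of the $\kappa$-reduction $h_1^\flat$ and treat the curve fibers by the base case; you are then left with an isotrivial fibration with genus-$2$ fiber over a product of theta divisors, and you must still prove that this is globally a product and that the \'etale cover $Z\to D_1\times\cdots\times D_s$ has degree one. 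Your proposed fix (abelian deck group coming from $\ker\rho$, a K\"unneth/$\chi$ computation, weak positivity) is only a plan, not an argument: in particular the monodromy of the isotrivial fibration $Z\to Z_1^\flat$ is not the same group as the deck group of $Z\to\prod D_i$, and nothing you write rules out a nontrivial \'etale cover of $\prod\Theta_i$ that is still of general type with $\chi=1$ for some sheaf. This ``recombination'' is the real content of the theorem in this case, and as written it is missing.

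The paper closes this by running the induction in the transverse direction. It first projects to the surface factor: $\cQ_Q=t_{1*}(\cF\otimes Q)$ is a strong Hodge sheaf on $D_1\subset K_1$ with $\chi=1$, so Lemma \ref{curve} gives $g(C_1)=2$ \emph{and} $\operatorname{rank}\cQ_Q=1$. The rank-one statement is the key dividend: it forces $h^0(F,(\cF\otimes Q)|_F)=1$ for a \emph{general fiber $F$ of $t_1\colon Z\to D_1$} (an $(s-1)$-dimensional subvariety of the $(2s-2)$-dimensional abelian variety $B_F=\ker(B\to K_1)$), so the inductive hypothesis applies to $F$ itself and says that $F\to D_2\times\cdots\times D_s$ is birational. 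Since $Z\to\prod D_i$ is finite \'etale and restricts over a general point of $D_1$ to $F\to\prod_{i\geq 2}D_i$, the degree is $1$ and the cover $\pi$ is an isomorphism --- no monodromy analysis is needed. The identification $\cF'=\omega_{Z'}\otimes Q$ is then done by restricting to the factors and applying the see-saw principle, which is close to (and simpler than) the box-product computation you sketch. I would encourage you to rewrite your inductive step with the roles of fiber and base exchanged in this way; everything else in your proposal then goes through.
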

\begin{proof}
We apply Corollary (\ref{inequality}) and diagram (\ref{extremecase1}).  Let
$C_i$ be the normalization of $D_i$ and $Z'$ the connected component of
$(C_1\times\cdots\times C_n)\times_{(K_1\times\cdots\times K_n)}B$ which dominates $Z$. We then have
\begin{eqnarray*}
\xymatrix{
 Z'\ar[d]_{\pi}\ar[r]^{\tau}& Z\ar@{^{(}->}[r]\ar@{->>}[d] & B\ar[d]^{\rho}_{\mathrm{isogeny}}\\
C_1\times\cdots\times C_n\ar[r]\ar[d]& D_1\times\cdots\times D_n \ar@{^{(}->}[r]\ar[d] & K_1\times\cdots\times K_n\ar[d]\\
C_1\ar[r]^{\tau_1} & D_1\ar@{^{(}->}[r] & K_1,}
\end{eqnarray*}
where $\pi$ is an \'etale morphism and $\tau$ is a desingularization.

We prove by induction on $\dim Z$. If $\dim Z=1$, then there exists $\cF'$ on $Z'$ such that $\R\tau_*\cF'=\cF$ and $\cF'\otimes\omega_{Z'}^{-1}$ is weakly positive. By Lemma \ref{curve}, we conclude the proof.

We then assume that Theorem \ref{products-curves} holds in dimension $<n$. Take $Q\in \PB$ a general torsion point, for the morphism $t_1: Z\rightarrow D_1$, let
$\cQ_Q=t_{1*}(\cF\otimes Q)$. Then $\cQ_Q$ is a strong Hodge sheaf on $K_1$ supported on $D_1$. Moreover,
 $h^0(D_1, \cQ_Q)=1$ and  hence $\chi(D_1, \cQ_Q)=1$. By $(P5)$, there exists $\cQ_Q'$ on $C_1$ such that $ \R\tau_{1*}\cQ_Q'=\cQ_Q$ and
 $\cQ_Q'\otimes \omega_{C_1}^{-1}$ is weakly positive.
 Then by Lemma \ref{curve}, $C_1$ is a smooth curve of genus $2$ and
 $\cQ_Q=\tau_{1*}(\omega_{C_1}\otimes Q_1)$ for a torsion line bundle $Q_1\in \Pic^0(C_1)$. In particular, $\rank \cQ_Q=1$. Similarly, we see that each $C_i$ has genus $2$.

 Let $F$ be a general fiber of $t_1$ and let $B_F$ the correspondingly fiber of $B\rightarrow K_1$.
 Then $\cF_Q':=(\cF\otimes Q)\mid_F$ is a strong Hodge sheaf on $B_F$ supported on $F$ such that $h^0(F, \cF_Q')=1$. By induction, $F$ is birational to a product of $n-1$ genus $2$ curves.  Then the natural morphism $F\rightarrow D_2\times \cdots\times D_s$ is birational and so is the morphism $Z\rightarrow D_1\times\cdots\times D_s$. Hence $\pi$ is an isomorphism.

 It remains to show that $\cF'$ is isomorphic to the canonical bundle of $Z'$ twisted by a torsion line bundle. Note that $\cF'$ restricted to each factor $C_i$ is of such form by Lemma \ref{curve} and we conclude by the see-saw principle.
\end{proof}
\begin{theo}\label{thetas}
Let $Z$ be a subvariety of general type of an abelian variety $B$ of codimension $s$.
Assume that  there exists a strong Hodge sheaf $\cF$ on $B$ supported on $Z$ such that $\chi(Z, \cF)=1$, $Z$ is smooth in codimension $1$  and $B$ has $s$ simple factors.
Then Question \ref{problem} has an affirmative answer.
\end{theo}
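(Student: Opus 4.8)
The plan is to prove the statement by induction on $s$, which by hypothesis equals both $\codim_B Z$ and the number of simple factors of $B$. The base case $s=1$, where $Z$ is an ample divisor in a simple abelian variety, is the crux; the inductive step will fiber $Z$ over one simple factor and reconstruct it as a product, following the strategy of the proof of Theorem \ref{products-curves}. Throughout we may assume that $Z$ generates $B$, replacing $B$ by the abelian subvariety generated by a translate of $Z$ if necessary.

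For the base case $s=1$, write $Z=D\hookrightarrow K=B$, an ample divisor of general type, smooth in codimension $1$, in a simple abelian variety, carrying a strong Hodge sheaf $\cF$ with $\chi(D,\cF)=1$. Applying property (P5) to the identity morphism yields, on a desingularization $\epsilon\colon D'\to D$, a torsion-free sheaf $\cF_Q$ with $\mathbf{R}\epsilon_*\cF_Q=\cF\otimes Q$ and with $\cF_Q\otimes\omega_{D'}^{-1}$ weakly positive, whence $\chi(D',\cF_Q)=\chi(D,\cF)=1$. As $D'$ is of general type, generic vanishing gives $\chi(D',\omega_{D'})\ge 1$; combined with the weak positivity of $\cF_Q\otimes\omega_{D'}^{-1}$ and the fact that the cohomology support loci of the Hodge sheaf are torsion translates of abelian subvarieties, one deduces $\chi(D',\omega_{D'})=1$ and $\cF_Q\cong\omega_{D'}\otimes Q_1$ for a torsion line bundle $Q_1$. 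By Pareschi's characterization of theta divisors in \cite{P} (see also case (1) of \cite{JLT}), which applies precisely because $D$ is smooth in codimension $1$, it follows that $D$ is a theta divisor of $K$. I expect this base case to be the main obstacle: passing from $\chi(\cF)=1$ and mere weak positivity to the principality of the polarization $\cO_K(D)$ requires a careful control of the singularities of $D$, for which the hypothesis that $Z$ is smooth in codimension $1$ is indispensable.

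For the inductive step $s\ge 2$, the equality case of Corollary \ref{inequality} supplies an isogeny $\rho\colon B\to K_1\times\cdots\times K_s$ with each $K_i$ simple, ample divisors $D_i\hookrightarrow K_i$, and $Z$ an irreducible component of $\rho^{-1}(D_1\times\cdots\times D_s)$; thus $\rho|_Z\colon Z\to D_1\times\cdots\times D_s$ is finite étale, being a base change of $\rho$. Since étale morphisms preserve smoothness in codimension $1$ in both directions, each $D_i$ is smooth in codimension $1$, and each is of general type because $K_i$ is simple. For a general torsion point $Q\in\PB$ and the projection $t_1\colon Z\to D_1$, Lemma \ref{basic} shows $\cQ_Q:=t_{1*}(\cF\otimes Q)$ is a strong Hodge sheaf on $K_1$ supported on $D_1$; since $h^0(D_1,\cQ_Q)=h^0(Z,\cF\otimes Q)=1$ and $\cQ_Q$ is M-regular, one has $\chi(D_1,\cQ_Q)=1$, so the base case identifies $D_1$ as a theta divisor, and likewise each $D_i$. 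Taking a general fiber $F$ of $t_1$, with $B_F$ the corresponding fiber of $B\to K_1$, property (P6) shows $(\cF\otimes Q)|_F$ is a strong Hodge sheaf on $B_F$ supported on $F$ with $h^0=1$; as $B_F$ is isogenous to $K_2\times\cdots\times K_s$, which has $s-1$ simple factors, and $F$ has codimension $s-1$ and is smooth in codimension $1$, the induction hypothesis shows $F$ is birational to $D_2\times\cdots\times D_s$.

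It remains to assemble the conclusion. The fiberwise birationality just obtained, together with the fact that $Z\to D_1$ is surjective with generic fiber $F$, shows that $\rho|_Z\colon Z\to D_1\times\cdots\times D_s$ is birational; being also finite étale, it is an isomorphism. Writing $Z':=D_1'\times\cdots\times D_s'$ for the product of desingularizations of the theta divisors $D_i$ and $t\colon Z'\to Z$ for the induced birational morphism, the pullback of $\cF$ to $Z'$ restricts on each factor $D_i'$ to $\omega_{D_i'}\otimes(\text{torsion})$ by the base case, so the see-saw principle identifies it with $\omega_{Z'}\otimes Q'$ for a single torsion line bundle $Q'$. Hence $\cF=t_*(\omega_{Z'}\otimes Q')$, giving the affirmative answer to Question \ref{problem}.
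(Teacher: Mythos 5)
Your inductive step ($s\ge 2$) is essentially the paper's argument: reduce via Corollary \ref{inequality} to $Z\subset\rho^{-1}(D_1\times\cdots\times D_s)$, push $\cF\otimes Q$ forward to one factor to identify $D_1$ as a theta divisor, restrict to a general fiber to apply induction, and finish with the see-saw principle. (The paper first reduces to the case where $Z\hookrightarrow B$ is primitive before asserting $Z\simeq\rho^{-1}(D_1\times\cdots\times D_s)$; your route via ``\'etale and birational implies isomorphism'' reaches the same conclusion, but your birationality claim already presupposes that $Z\to D_1$ has connected generic fiber $F$, which itself comes from knowing $t_{1*}(\cF\otimes Q)$ has rank one --- an output of the divisorial case, not an input.)

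The genuine gap is the base case $s=1$, which you correctly identify as ``the main obstacle'' but then do not actually prove. The sentence deducing $\chi(D',\omega_{D'})=1$ and $\cF_Q\cong\omega_{D'}\otimes Q_1$ from ``weak positivity of $\cF_Q\otimes\omega_{D'}^{-1}$ plus $\chi(D',\cF_Q)=1$ plus torsion support loci'' is an assertion, not an argument. In dimension one this works because weak positivity means nef, so $\deg\cF_Q\ge \operatorname{rk}(\cF_Q)\deg\omega_{D'}$ and Riemann--Roch gives $\chi(\cF_Q)\ge\operatorname{rk}(\cF_Q)\,\chi(\omega_{D'})$ (this is exactly Lemma \ref{curve}); in higher dimension no such Euler-characteristic monotonicity follows from weak positivity, and in particular nothing yet forces $\operatorname{rk}\cF_Q=1$. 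Moreover, invoking Pareschi's characterization (case (1) of \cite{JLT}) requires $\cF$ to already be the pushforward of the canonical sheaf of a desingularization, which is part of what must be established for a general strong Hodge sheaf. The paper's actual divisorial argument is a Fourier--Mukai computation: it writes $\R\Phi_{\cP_B}(\R\Delta_B(\cF))=(L\otimes\cI_V)[-g]$ with $V$ zero-dimensional, shows $L$ is ample, produces the extension $0\to W\to \R^0\Psi_{\cP_B}(L)^{\vee}\to\cF\to 0$ with $W$ flat, pulls back along the isogeny $\psi_L$ of degree $k^2=h^0(L)^2$, compares the degree-one and degree-two terms of the Chern character via Grothendieck--Riemann--Roch against the pseudo-effectivity of $\det\widetilde{\cF}'-k_1K_{\widetilde X}$ (this is where weak positivity and smoothness in codimension one actually enter, via $\rho'_*K_{\widetilde X}=\cO_{\PB}(\widetilde Z)|_{\widetilde Z}$), and finally plays $\chi(\widetilde X,\omega_{\widetilde X})\le k$ against the degree $k^2$ of the \'etale cover to force $k=1$. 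Without some substitute for this computation your base case, and hence the whole induction, does not close.
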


We first prove the divisorial case and then apply  Theorem \ref{subvariety} to conclude the proof for the general case.

\subsection{Divisorial case}\label{divisorcase}
In this subsection, we assume that $B$ is a simple abelian variey of dimension $g$, $Z$ is an irreducible ample divisor smooth in codimension $1$ and there exists a strong Hodge sheaf $\cF$ on $B$ such that $\chi(Z, \cF)=1$.  Note that in this case $Z$ is normal.

We aim to prove Theorem 4.3 in this case. Our argument follows closely that of Pareschi in \cite[Theorem 5.1]{P}.

Let $\rho: X\rightarrow Z$ be a desingularization and by definition of stong Hodge sheaf,  there exists a torsion-free sheaf  $\cF'$ on $X$ such that $\R\rho_*\cF'=\rho_*\cF'=\cF$ and $\cF'\otimes \omega_X^{-1}$ is weakly positive.   Since $Z$ is normal,  the composition of morphism $X\xrightarrow{\rho} Z\hookrightarrow B$ is primitive, namely the pull-back $\PB\rightarrow \Pic^0(X)$ is injective  (in other words, any \'etale over of $Z$ induced by an \'etale cover of $B$ remains irreducible).

Let $\R\Delta_B(\cF)=\R\mathcal{H}om(\cF, \cO_B)$. Since $\cF$ is M-regular on $B$ and $\chi(\cF)=1$,
$$\R\Phi_{\cP_B}(\R\Delta_B(\cF))=(L\otimes\cI_V)[-g],$$
where $V$ is a subscheme of $\PB$ of dimension zero and $L$ is a line bundle on $\PB$.
Since $B$ is simple, $L$ is either ample, anti-ample, or trivial.
We will see soon that $L$ is ample.

By Fourier-Mukai equivalence,
\begin{eqnarray}\label{isom}
\R\Delta_B(\cF)\simeq (-1_B)^* \R\Psi_{\cP_B}(L\otimes \cI_V),
\end{eqnarray}
and both are supported on $B$.

We then consider the short exact sequence on $\PB$:
\begin{eqnarray*}
0\rightarrow L\otimes \cI_V\rightarrow L\rightarrow L\mid_V\rightarrow 0,
\end{eqnarray*}
and apply the functor $\R\Psi_{\cP_{B}}$:
\begin{eqnarray*}
0\rightarrow\R^0\Psi_{\cP_{B}}(L\otimes \cI_V)\rightarrow \R^0\Psi_{\cP_{B}}(L)\rightarrow \R^0\Psi_{\cP_{B}}(L\mid_V)\rightarrow \R^1\Psi_{\cP_{B}}(L\otimes \cI_V)\rightarrow \R^1\Psi(L).
\end{eqnarray*}

From this long exact sequence, we know that $L$ has sections otherwise $\R^0\Psi_{\cP_{B}}(L\mid_V)\rightarrow \R^1\Psi_{\cP_{B}}(L\otimes \cI_V)$ is an injection and $\R\Delta_B(\cF)$ cannot be supported on $B$.
If $L$ is trivial, then $\R^0\Psi_{\cP_{B}}(L\otimes \cI_V)=0$, $\R^0\Psi_{\cP_{B}}(L)$ is the skyscraper sheaf supported at $0 \in B$, and $\R^0\Psi_{\cP_{B}}(L\mid_V)$ is (always) locally free.
Thus this does not happen either.
That is, $L$ is ample and  $\R^i\Psi_{\cP_{B}}(L\otimes \cI_V)\simeq \R^i\Psi_{\cP_{B}}(L)=0$ for $i>1$.
We can write the previous long exact sequence more explicitly as
\begin{eqnarray*}
0\rightarrow\R^0\Psi_{\cP_{B}}(L\otimes \cI_V)\rightarrow \R^0\Psi_{\cP_{B}}(L)\rightarrow \R^0\Psi_{\cP_{B}}(L\mid_V)\rightarrow \R^1\Psi_{\cP_{B}}(L\otimes \cI_V)\rightarrow 0.
\end{eqnarray*}

Since $\R^0\Delta_B(\cF)=0$, we have $\R\Psi_{\cP_B}(L\otimes \cI_V)=\R^1\Psi_{\cP_B}(L\otimes \cI_V)[-1]$ and
\begin{eqnarray}\label{short}0\rightarrow \R^0\Psi_{\cP_{B}}(L)\rightarrow \R^0\Psi_{\cP_{B}}(L\mid_V)\rightarrow \R^1\Psi_{\cP_{B}}(L\otimes \cI_V)\rightarrow 0.\end{eqnarray}
Taking the duality of (\ref{isom}), we see that $\cF\simeq \mathcal{E}xt^1((-1_B)^* \R^1\Psi_{\cP_{B}}(L\otimes \cI_V), \cO_{\PB})$. Apply the functor $\R\Delta_{B}$ to (\ref{short}), we have:
\begin{eqnarray*}
0\rightarrow \R^0\Psi_{\cP_{B}}(L\mid_V)^{\vee}\rightarrow \R^0\Psi_{\cP_{B}}(L)^{\vee}\rightarrow \cF\rightarrow 0.
\end{eqnarray*}

We know that $W:=\R^0\Psi_{\cP_{B}}(L\mid_V)^{\vee}$ is a flat vector bundle and $\R^0\Psi_{\cP_{B}}(L)^{\vee}$ is an ample vector bundle on $B$.

Let $\psi_L: \PB \rightarrow B$ be the isogeny induced by $L$.
Assume that $h^0(\PB, L)=k\geq 1$. It follows that $\deg \psi_L=k^2$.
Moreover, we know that $\psi_L^*\R^0\Psi_{\cP_{B}}(L)^{\vee}=L^{\oplus k}$. Let $\widetilde{W}=\psi_L^*W$ and $\widetilde{\cF}=\psi_L^*\cF$.
As a consequence,
\begin{eqnarray}\label{sequence}
0\rightarrow \widetilde{W}\rightarrow L^{\oplus k}\rightarrow \widetilde{\cF}\rightarrow 0.
\end{eqnarray}

We compute
\begin{eqnarray*}
ch(\widetilde{\cF})&=&ch( L^{\oplus k})-ch(\widetilde{W})\\
&\equiv &k\sum_{m\geq 1}\frac{1}{m!}L^m,
\end{eqnarray*}
where $\equiv$ means algebraic equivalence between algebraic cycles on $\PB$.

On the other hand, since $Z\rightarrow B$ is primitive,  the induced \'etale cover $\widetilde{X}:=X\times_A\PA$ is irreducible and so is $\widetilde{Z}:=Z\times_A\PA$.
Let $\rho': \widetilde{X}\rightarrow \widetilde{Z}$ be the induced desingularization
and $\widetilde{\cF}'$ be the pullback of $\cF$ on $\widetilde{X}$.
We denote
\begin{eqnarray*}
\xymatrix{
i: \widetilde{X}\ar[r]^{\rho'} & \widetilde{Z}\ar[r]^j &\PB}.
\end{eqnarray*}
Thus
\[
\R i_*\widetilde{\cF}'=i_*\widetilde{\cF}'=\widetilde{\cF},
\]
 and
$\widetilde{\cF}'\otimes\omega_{\widetilde{X}}^{-1}$ is also weakly positive on $\widetilde{X}$.
By Grothendieck-Riemann-Roch, $$i_*(ch(\widetilde{\cF'})Td(\widetilde{X}))\equiv ch(\widetilde{\cF})\equiv k\sum_{m\geq 1}\frac{1}{m!}L^m.$$
Let $\rank \widetilde{\cF}=\rank \widetilde{\cF'}=k_1\leq k$.
Compute the degree $1$ terms, we have  $k_1\widetilde{Z}\equiv kL$. Compute the degree $2$ terms, we have
$$i_*\big(c_1(\widetilde{\cF'})-\frac{1}{2}kc_1(\omega_{\widetilde{X}})\big)\equiv \frac{1}{2}kL^2.$$

Since $\widetilde{\cF'}\otimes \omega_{X'}^{-1}$ is weakly positive, $D:=\det\widetilde{\cF'}-k_1K_{\widetilde{X}}$ is a psuedo-effective divisor on $\widetilde{X}$ (see for instance \cite[Corollary 2.20]{V}). Then
$i_*(\frac{1}{2}k_1K_{\widetilde{X}}+D)\equiv\frac{1}{2}kL^2$. Hence  we write $$j_*(\rho'_*K_{\widetilde{X}}+\rho'_*D')\equiv \frac{k}{k_1}L^2,$$ where $D'$ is a pseudo-effective $\mathbb{Q}$-divisor on $\widetilde{X}$.

Since $\wZ$ is normal, we have $\rho'_*K_{\wX}=K_{\wZ}=\cO_{\PB}(\wZ)\mid_{\wZ}$ and hence $[j_*\rho'_*(K_{\wX})]=(\frac{k}{k_1})^2[L]^2\in H^4(X, \mathbb{Q})$.  As $D$ is pseudo-effective, we see immediately that $k=k_1$ and $\cO_{\PB}(\wZ)$ is algebraically equivalent to $L$.  We note moreover that $\rho'_*\omega_{\wX}=\omega_{\wZ}\otimes \cI$ for some ideal sheaf $\cI$. Hence, for a general $Q\in\Pic^0(\PB)$,
$$\chi(\wX, \omega_{\wX})=h^0(\wX, \omega_{\wX}\otimes i^*Q)=h^0(\wZ, \rho'_*\omega_{\wX}\otimes Q)\leq h^0(\wZ, \cO_{\PB}(\wZ)\mid_{\wZ}\otimes Q)=k.$$
On the other hand, $\wX\rightarrow X$ is an \'etale cover of degree $k^2$. Thus, $k=1$, $\phi_L: \PA\rightarrow A$ is an isomorphism and $Z\simeq \wZ$ is a theta divisor. By $(\ref{sequence})$, $\cF=\omega_Z\otimes Q$ for some torsion line bundle $Q\in \PB$.

\begin{rema}
Instead of assuming that $Z$ is smooth in codimension $1$, we can conclude by a similar argument by simply assuming that  $\rho_*K_{X}\equiv M\mid_D$ for some line bundle $M$ on $B$.
\end{rema}
\subsection{General case}
We  consider the commutative diagram (\ref{extremecase}) in Corollary \ref{inequality} and argue by induction on $s$.

If $Z\hookrightarrow B$ is not primitive, we can take an \'etale cover of $B'\rightarrow B$ such that,  for an irreducible component $Z'$ of $Z\times_BB'$, $Z'\hookrightarrow B'$ is primitive and $Z'$ is birational to $Z$. Hence we  will assume that $Z\hookrightarrow B$ is primitive.

Since $D_i$ is the image of the natural morphism $Z\rightarrow K_i$, $D_i\hookrightarrow K_i$ is also primitive. Hence, $\rho^{-1}(D_1\times\cdots\times D_s)$ is irreducible and
$Z\simeq \rho^{-1}(D_1\times\cdots\times D_s)$. Since $Z$ is smooth in codimension $1$, each $D_i$ is smooth in codimension $1$.
Moreover each $D_i$ is an ample divisor of the simple abelian variety $K_i$.
  We denote by $p: Z\rightarrow D_1$ the natural morphism.  Then for $Q\in \Pic^0(B)$ general, we have
 $$\chi(D_1, p_*(\cF\otimes Q))=h^0(D_1, p_*(\cF\otimes Q))=h^0(Z, \cF\otimes Q)=1.$$

By (\ref{divisorcase}), each $D_i$ is a theta divisor and the sheaf $p_*(\cF\otimes Q)$ has rank $1$.
Then for a general fiber $F$ of $p$, $F$ is a subvariety of general type of $B_1:=\ker(B\rightarrow K_1)$ and is smooth in codimension $1$.
Note that $(\cF\otimes Q)\mid_F$ is a strong Hodge sheaf on $B_1$ supported on $F$.
Since $p_*(\cF\otimes Q)$ has rank $1$, $h^0(F, (\cF\otimes Q)\mid_F)=1$. It then follows that $\chi(F, \cF\otimes Q\mid_F)=1$.

By induction,  $F$ is birational to a product of theta divisors. Consider the induced morphisms
\begin{eqnarray*}
\xymatrix{
F\ar@{^{(}->}[r]\ar[d]^{\pi_F}&  B_1\ar[d]^{\pi}\\
D_2\times\cdots\times D_s\ar@{^{(}->}[r] & K_2\times\cdots\times K_s.}
\end{eqnarray*}
 Since $\pi$ is an isogeny, we see immediately that $\pi$ is an isomorphism and $\pi_F$ is also an isomorphism. Thus $Z\simeq D_1\times\cdots\times D_s$.  Moreover, $\cF$ is a torsion-free rank $1$ sheaf and by induction,
 $\cF\mid_{D_1\times y}\simeq \omega_{D_1}\otimes Q_1$ for all $y\in D_2\times\cdots\times D_s$, where $Q_1$ is a fixed torsion line bundle on $D_1$ and  $\cF\mid_{x\times  D_2\times\cdots\times D_s}\simeq \omega_{D_2\times\cdots\times D_s}\otimes Q_2$ for  all $x\in D_1$, where $Q_2$ is a fixed torsion line bundle on $D_2\times\cdots\times D_s$. Indeed, $Q_1$ and $Q_2$ can be read from the cohomological support loci of $\cF$.
 We then conclude that  $\cF=\omega_Z\otimes (Q_1\boxtimes Q_2)$.

\subsection{Proof of Theorem \ref{image-theta}}
Theorem \ref{image-theta} is a direct corollary of the proof of Theorem \ref{thetas}.
We have already proved that all the assertions except the last one that the Albanese map is a fibration.
To prove that the Albanese map is a fibration, simply note that in the first part of the proof, we have already established that $a_{X*}(\omega_X)$ has rank $1$.
Then the Albanese map has to be a fibration.

\section{Fibrations over genus $2$ curves}
In this section, we take into considerations  the map between the second Betti cohomology.

We first assume  Theorem \ref{subvariety1} and complete the proof of  Theorem \ref{fibration-to-curve}.
\begin{proof}We use the commutative diagram (\ref{setting}).

We first claim that the restriction map $H^2(B, \mathbb{Q})\rightarrow H^2(Z_X, \mathbb{Q})$ is not injective. Otherwise, since $Y=Z_X\times_{B}  A_X$ and $Z$ generates $B$,  the   map  $H^2(A_X, \mathbb{Q})\rightarrow H^2(Y, \mathbb{Q})$ is  also injective.
Moreover, since $g$ is a surjective, the pull-back  $g^*: H^i(Y, \mathbb{Q})\rightarrow H^i(X, \mathbb{Q})$  induces an
injective map $\mathrm{Gr}^W_iH^i(Y, \mathbb{Q})\rightarrow H^i(X, \mathbb{Q})$, where $W^{\cdot}$ is the weight filtration on
$H^*(Y, \mathbb{Q})$. Thus, $a_X^*: H^2(A_X, \mathbb{Q})\rightarrow H^2(X, \mathbb{Q})$ is injective and hence is a contradiction.

Thus, the restriction map $H^2(B, \mathbb{Q})\rightarrow H^2(Z_X, \mathbb{Q})$ is not injective and $Z_X$ generates $B$. By Theorem \ref{subvariety1} below, we have a fibration $h: \overline{Z}_X\rightarrow C$, where $\overline{Z}_X$ is the normalization of $Z_X$ and $C$ is a smooth projective curve of genus $2$. Since $f: X\rightarrow Z_X$ factors through the normalization of $Z_X$, we then have a fibration $\varphi: X\rightarrow C$.
 \end{proof}

\begin{theo}\label{subvariety1} Let $Z\hookrightarrow B$ be a subvariety of general type generating $B$ and let $\overline{Z}$ be the normalization of $Z$. Let $\cF$ be a strong Hodge sheaf on $Z$. Assume that $\chi(Z, \cF)=1$ and the restriction map $H^2(B, \mathbb{Q})\rightarrow H^2(Z, \mathbb{Q})$ is not injective. Then there exists a fibration $h: \overline{Z}\rightarrow C$ to a smooth projective genus $2$ curve $C$.
\end{theo}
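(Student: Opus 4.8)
The plan is to analyze the kernel of the pullback on $H^2$ as a Hodge structure and to combine the Lefschetz hyperplane theorem with the structure obtained in Theorem~\ref{subvariety}, thereby reducing the production of a genus $2$ curve either to a two-dimensional quotient of $B$ or to the Castelnuovo--de Franchis theorem. First I would pass to a resolution $\mu\colon X\to \overline{Z}$ and set $a\colon X\to \overline{Z}\to B$. Since $Z$ generates $B$, the pullback $H^1(B,\mathbb{Q})\hookrightarrow H^1(X,\mathbb{Q})$ is injective, so $a^*\colon H^2(B,\mathbb{Q})=\Lambda^2 H^1(B,\mathbb{Q})\to H^2(X,\mathbb{Q})$ is the cup product restricted to classes coming from $B$. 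The kernel $K:=\ker a^*$ is a nonzero weight-$2$ sub-Hodge structure, as it contains $\ker\big(H^2(B,\mathbb{Q})\to H^2(\overline{Z},\mathbb{Q})\big)$, which is nonzero by hypothesis. Everything now depends on locating $K$ inside the factors of $B$ and on its Hodge type.

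The Lefschetz mechanism serves as the base case. If $\codim_B Z=1$, then $Z$ is an ample divisor and the Lefschetz hyperplane theorem (applied on a resolution) makes $H^2(B,\mathbb{Q})\to H^2(\overline{Z},\mathbb{Q})$ injective as soon as $\dim B\ge 3$; the hypothesis therefore forces $\dim B=2$, so $\overline{Z}$ is a smooth ample curve of genus $\ge 2$, and Lemma~\ref{curve} together with $\chi(Z,\cF)=1$ gives $g(\overline{Z})=2$. More generally, for any quotient $q\colon B\to K'$ with $\dim K'\ge 3$ under which $Z$ maps onto an ample divisor, the same theorem shows $H^2(K',\mathbb{Q})$ injects into the cohomology of the image, so no nonzero ``diagonal'' class of such a factor can lie in $K$. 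I would then run induction on $s=\codim_B Z\ge 2$ using the essential components of $V^1(\cF)$ and the reductions $Z_i^\flat\hookrightarrow B_i^\flat$ of Theorem~\ref{subvariety}, each of codimension $s-1$, generating $B_i^\flat$ and carrying $\cF^i_Q$ with $\chi(Z_i^\flat,\cF^i_Q)=1$. Relative to the quotients $p_i\colon B\to K_i$, a class of $K$ pulled back from some $B_i^\flat$ descends (pullback is injective along the surjection $\overline{Z}\to Z_i^\flat$) to a nonzero element of $\ker\big(H^2(B_i^\flat,\mathbb{Q})\to H^2(Z_i^\flat,\mathbb{Q})\big)$, and the induction hypothesis produces a fibration $Z_i^\flat\to C$ which lifts to $\overline{Z}\to C$. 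A class of $K$ lying in a single factor $\Lambda^2 H^1(K_i,\mathbb{Q})=H^2(K_i,\mathbb{Q})$ forces, by the Lefschetz step applied to $Z\to D_i$, that $\dim K_i=2$; then $D_i$ is an ample genus $2$ curve by Lemma~\ref{curve}, and $\overline{Z}\to D_i$ is the desired fibration.

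The remaining case is a genuinely ``cross'' class of $K$ not descending to any reduction. Here I would invoke the product-of-ample-divisors description of $Z$ (Remark~\ref{remark} and Corollary~\ref{dimension}), so that Künneth makes the mixed term $H^1(K_i,\mathbb{Q})\otimes H^1(K_j,\mathbb{Q})$ visible in $H^2$; passing to the $(2,0)$-part of $K$ then yields two linearly independent holomorphic one-forms $\omega_1,\omega_2\in H^0(B,\Omega^1_B)$ with $a^*(\omega_1\wedge\omega_2)=0$. The generalized Castelnuovo--de Franchis theorem provides a fibration $X\to C$ onto a smooth curve of genus $\ge 2$ with $\omega_1,\omega_2$ pulled back from $C$; since these forms come from $B$, there is an induced surjection $q\colon B\to JC$, and the image of $\overline{Z}\to B\to JC$ is the curve $C$, giving a fibration $h\colon\overline{Z}\to C$. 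To pin the genus I would push $\cF$ forward along $q$: by Lemma~\ref{basic}(2) and property $(P5)$, $\cF_C:=q_*(\cF\otimes Q)$ is a strong Hodge sheaf on $C$ with $\cF_C\otimes\omega_C^{-1}$ weakly positive, and, using that $\cF$ is M-regular with $\chi(Z,\cF)=1$, one has $\chi(C,\cF_C)=h^0(C,\cF_C\otimes Q')=1$ for general $Q'$; Lemma~\ref{curve} then forces $g(C)=2$.

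The hard part is precisely this cross-class case: guaranteeing that the surviving kernel contains a holomorphic, decomposable (rank two) element, that is, controlling the Hodge type of $K$ and ruling out purely $(1,1)$ or higher-rank obstructions. This is where the explicit product structure of Theorem~\ref{subvariety} and Künneth injectivity for the mixed terms are indispensable, since they are what reduce an a priori arbitrary weight-$2$ kernel class to an isotropic pair of holomorphic one-forms to which Castelnuovo--de Franchis applies.
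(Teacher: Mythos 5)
Your skeleton --- induction on $s=\codim_B Z$, the Lefschetz hyperplane theorem plus Lemma \ref{curve} for the base case $s=1$, and descent of a kernel class to one of the reductions $Z_i^\flat\hookrightarrow B_i^\flat$ so that the inductive hypothesis produces the fibration --- is exactly the paper's, and those parts are sound. The genuine gap is the one you flag yourself: the ``cross class'' case. There is no reason for the kernel $K\subset H^2(B,\mathbb{Q})$ to contain a \emph{decomposable} class of type $(2,0)$: as a rational sub-Hodge structure of $\Lambda^2H^1(B,\mathbb{Q})$ it could a priori lie entirely in the $(1,1)$ part, or its $(2,0)$ part could consist only of indecomposable $2$-forms. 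So the generalized Castelnuovo--de Franchis theorem cannot be invoked, and your argument does not close.

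The paper resolves this by proving that the cross case simply does not occur. Fixing two components with $\PB_1+\PB_2=\PB$ and writing $B$ as an isogeny quotient of $B_1^\flat\times_K B_2^\flat$, it decomposes $H^2(B,\mathbb{Q})=\big(H^2(B_1^\flat,\mathbb{Q})+H^2(B_2^\flat,\mathbb{Q})\big)\oplus\big(H^1(K_1',\mathbb{Q})\wedge H^1(K_2',\mathbb{Q})\big)$ and shows (the Claim in its proof) that if both maps $H^2(B_i^\flat,\mathbb{Q})\to H^2(Z_i^\flat,\mathbb{Q})$ were injective, then a kernel class $\alpha=w+v$ would have $w=0$ (by pushing forward along $h_i'$ after cupping with a power of an ample class --- essentially your descent step) and also $v=0$. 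The vanishing of the cross term $v$ is what your proposal is missing: one restricts to a general fiber $F$ of $Z\to Z_3$, where $Z_3$ is the image of $Z$ in $K$; $F$ surjects onto the honest product $F_1\times F_2$ of fibers, K\"unneth gives injectivity of $H^1(K_1',\mathbb{Q})\wedge H^1(K_2',\mathbb{Q})\to H^2(F_1\times F_2,\mathbb{Q})$, and purity of this Hodge structure transports the injectivity through $H^2(F,\mathbb{Q})$ and hence through $H^2(Z,\mathbb{Q})$. Thus a nonzero kernel class forces one of the $\varphi_i$ to be non-injective and the induction applies directly; no appeal to Castelnuovo--de Franchis is needed. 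Your K\"unneth observation is the right ingredient, but it should be used to kill the cross term, not to manufacture a holomorphic isotropic pair.
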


\begin{proof}
We argue by induction on $\codim_BZ$.
If $\codim_BZ=1$,  $Z$ is an ample divisor of $B$.
By Lefschetz hyperplane theorem, the restriction map $H^i(B, \mathbb{Q})\rightarrow H^i(Z, \mathbb{Q})$ is injective for all $0\leq i\leq \dim Z$.
Hence $\dim Z=1$. By Lemma \ref{curve}, $\overline{Z}$ is a smooth projective curve of genus $2$.

In the following assume that $\codim_B Z=k\geq 2$ and that Theorem \ref{subvariety1} holds for subvarieties of $B$ whose codimension is less than $k$.

 Pick two components $\PT_1=P_1+\PB_1$ and $\PT_2=P_2+\PB_2$ of $V^1(\cF)$ such that $\PB_1+\PB_2=\PB$.
 Consider the morphisms $h_1^\flat: Z\rightarrow Z_1^\flat$ and $h_2: Z\rightarrow Z_2^\flat$  as in Lemma \ref{fibration}. \\\\

{\bf Claim:} \;{\em  Either $$\varphi_1: H^2(B_1^\flat, \mathbb{Q})\rightarrow  H^2(Z_1^\flat, \mathbb{Q})$$
is not injective or $$\varphi_2: H^2(B_2^\flat, \mathbb{Q})\rightarrow  H^2(Z_2^\flat, \mathbb{Q}
)$$ is not injective. }\\

 We argue by contradiction. Assume that both $\varphi_1$ and $\varphi_2$ are injective.
 Let $\PK$ be the neutral component of $\widehat{B_1^\flat}\cap \widehat{B_2^\flat}$. Then the induced morphism
 $B\rightarrow B_1^\flat\times_{K}B_2^\flat$ is an isogeny. We also take $B_1^\flat\rightarrow K_1'$ and $B_2^\flat\rightarrow K_2'$
 be quotients with connected fibers such that the indcued morphisms $B_i^\flat\rightarrow K_i'\times K$ are isogenies for $i=1$, $2$.

Note that

\begin{eqnarray*}H^2(B, \mathbb{Q})&=&\big(H^2(B_1^\flat, \mathbb{Q})+H^2(B_2^\flat, \mathbb{Q})\big)\oplus
\big(H^1(K_1', \mathbb{Q})\wedge H^1(K_2', \mathbb{Q})\big).
\end{eqnarray*}

 Let $0\neq \alpha=w +v$ in the kernel of the restriction map $\varphi: H^2(B, \mathbb{Q})\rightarrow H^2(Z, \mathbb{Q})$,
 where $w\in H^2(B_1^\flat, \mathbb{Q})+H^2(B_2^\flat, \mathbb{Q})$ and $v\in H^1(K_1', \mathbb{Q})\wedge H^1(K_2', \mathbb{Q})$.
 Moreover we can write $ H^2(B_i^\flat, \mathbb{Q})=W_i \oplus H^2(K, \mathbb{Q})$, where
 $W_i=H^2(K_i', \mathbb{Q})\oplus\big( H^1(K_i', \mathbb{Q})\wedge H^1(K, \mathbb{Q})\big)$.
 Then $$H^2(B_1^\flat, \mathbb{Q})+H^2(B_2^\flat, \mathbb{Q})=W_1\oplus  W_2\oplus H^2(K, \mathbb{Q}),$$
 and we suppose that $w=w_1+w_2+w_3$, where $w_i\in W_i$ for $i=1$, $2$ and $w_3\in H^2(K, \mathbb{Q})$.

 We then take a smooth models $Z'$ of $Z$, $Z_i'$ of $Z_i^\flat$ for $i=1$, $2$ and consider the maps
\begin{eqnarray*}
\xymatrix{Z'\ar[d]^{h_i'}\ar[r]^{\rho} & B\ar[d]^{p_i}\ar[r] & K_{3-i}'\\
Z_i'\ar[r]^{\rho_i}  & B_i^\flat.}
\end{eqnarray*}

Since $\varphi_i$ is injective and the Hodge structures on $H^2(B_i^\flat, \mathbb{Q})$ is pure,
$\rho_i^*: H^2(B_i^\flat, \mathbb{Q})\rightarrow H^2(Z_i', \mathbb{Q})$ is also injective, for $i=1$, $2$.
Note that $\alpha$ is also in the kernel of $\rho^*: H^2(B, \mathbb{Q})\rightarrow H^2(Z', \mathbb{Q})$.

We take a ample class $l\in H^2(B, \mathbb{Q})$. Let $s_i$ be the dimension of a general fiber of $p_i$.
Then $$0=h_{i*}'\rho^*((w+v)\cup l^{2s_i})=h_{i*}'\rho^*(w\cup l^{2s_i})=M_i\rho_i^*(w_i+w_3),$$ for some positive number $M_i$.
Since $\rho_i^*: H^2(B_i^\flat, \mathbb{Q})\rightarrow H^2(Z_i', \mathbb{Q})$ is  injective,
we conclude that $w_i=0$ for $i=1$, $2$, and $3$. Thus $w=0$.

Let $Z_3 $ be the image of the morphisms $Z\hookrightarrow A\rightarrow K$.  Since $Z$ generates $B$,
$Z_i$ generates $B_i$. Hence for a general fiber $F_i$ of $Z_i\rightarrow Z_3$, the natural map
$H^1(K_i', \mathbb{Q})\rightarrow H^1(F_i, \mathbb{Q})$ is injective. Let $F$ be a general fiber of $Z\rightarrow Z_3$.
Then we have natural morphisms $F\twoheadrightarrow F_1\times F_2\rightarrow K_1'\times K_2'$. Since the map
$H^1(K_1', \mathbb{Q})\wedge H^1(K_2', \mathbb{Q})\rightarrow H^2(F_1\times F_2, \mathbb{Q})$ is injective and
the Hodge structure on $H^1(K_1', \mathbb{Q})\wedge H^1(K_2', \mathbb{Q})$ is pure,
we conclude that the map $$H^1(K_1', \mathbb{Q})\wedge H^1(K_2', \mathbb{Q}) \rightarrow H^2(F, \mathbb{Q})$$
is also injective. This map factors through
$\varphi\mid_{H^1(K_1', \mathbb{Q})\wedge H^1(K_2', \mathbb{Q}) }: H^1(K_1', \mathbb{Q})\wedge H^1(K_2', \mathbb{Q}) \rightarrow H^2(Z, \mathbb{Q}) $, hence $\varphi\mid_{H^1(K_1', \mathbb{Q})\wedge H^1(K_2', \mathbb{Q}) } $ is also injective and $v=0$, which is a contradiction.\\ \\
{\bf  Conclusion:}

 We may assume that $\varphi_1$ is not injective. Moreover, $Z_1^\flat\hookrightarrow B_1^\flat$
 is a subvariety of general type and $Z_1$ generates $B_1$ and for $Q\in \PB$ a general torsion point,
 $\cF_Q:=h_{1*}^\flat(\cF\otimes Q)$ is a Hodge sheaf supported on $Z_1^\flat$ with $\chi(Z_1^\flat, \cF_Q)=1$.
 Hence by induction, there exists a fibration $\overline{Z_1^\flat}\rightarrow C$ to a smooth projective genus $2$ curve.
 Hence we have the induced fibration $h: \overline{Z}\rightarrow C$.
\end{proof}
\begin{theo}\label{split}
Under the assumption of Theorem \ref{subvariety1}, let $m=\dim \ker(H^2(B, \mathbb{Q})\rightarrow H^2(Z, \mathbb{Q}))$. Then $m$ is divisible by $5$. Let $m=5k$. Then there exists a fibration $\overline{Z}\rightarrow C_1\times\cdots\times C_k$, where $C_i$ is a smooth projective curve of genus $2$ for each $1\leq i\leq k$.
\end{theo}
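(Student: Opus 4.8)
The plan is to induct on $s=\codim_B Z$, peeling off one genus $2$ curve at a time and showing that each one removes \emph{exactly} a five-dimensional piece of the kernel. Write $K:=\ker\big(H^2(B,\mathbb{Q})\to H^2(Z,\mathbb{Q})\big)$, a sub-Hodge structure of $H^2(B,\mathbb{Q})=\Lambda^2 H^1(B,\mathbb{Q})$; since $Z$ generates $B$ the pullback $H^1(B,\mathbb{Q})\hookrightarrow H^1(\overline{Z},\mathbb{Q})$ is injective and, after passing to a smooth model, $K$ is cut out by the cup-product map $\Lambda^2 H^1(B,\mathbb{Q})\to H^2(\overline{Z},\mathbb{Q})$. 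If $K=0$ there is nothing to prove ($k=0$, empty product); the base case $s=1$ is exactly Theorem \ref{subvariety1} (either $K=0$, or $\dim Z=1$ and $\overline{Z}$ is a genus $2$ curve with $m=5$). Otherwise Theorem \ref{subvariety1} yields a fibration $h_1\colon \overline{Z}\to C_1$ onto a genus $2$ curve. Tracing the construction (the terminal step of the induction in Theorem \ref{subvariety1} is an ample curve in an abelian surface) shows $C_1$ comes from a surjection $p_1\colon B\twoheadrightarrow A_1:=JC_1$ with $C_1\hookrightarrow A_1$ the Abel--Jacobi embedding and $h_1$ factoring through $C_1$.

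Set $E_1=(\ker p_1)^0$, so $B\sim A_1\times E_1$ up to isogeny and, as Hodge structures,
\[
H^2(B,\mathbb{Q})=H^2(A_1,\mathbb{Q})\ \oplus\ \big(H^1(A_1,\mathbb{Q})\otimes H^1(E_1,\mathbb{Q})\big)\ \oplus\ H^2(E_1,\mathbb{Q}).
\]
Because $h_1$ factors through $C_1$, the $5$-dimensional sub-Hodge structure $K_1:=\ker\big(\Lambda^2 H^1(C_1,\mathbb{Q})\to H^2(C_1,\mathbb{Q})\big)\subseteq H^2(A_1,\mathbb{Q})$ lies in $K$. Let $F$ be a general fibre of $h_1$; then $F\hookrightarrow E_1$ is a translate of $\ker p_1$ generating $E_1$, with $\codim_{E_1}F=s-1$. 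By property $(P6)$ the restriction $\cF|_F$ is a strong Hodge sheaf supported on $F$, and since the induced morphism $Z\to C_1$ has $p_{1*}(\cF\otimes Q)$ of rank one for general $Q$ (Lemma \ref{curve}, $C_1$ being of genus $2$), one gets $h^0(F,\cF|_F)=1$, hence $\chi(F,\cF|_F)=1$. If $F$ is not of general type we replace $E_1,F$ by the $\kappa$-reduction, preserving all hypotheses.

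The heart of the argument is the splitting
\[
K\ =\ K_1\ \oplus\ \ker\big(H^2(E_1,\mathbb{Q})\to H^2(F,\mathbb{Q})\big),
\]
with the second summand inside the factor $H^2(E_1,\mathbb{Q})$. Writing $\alpha=\alpha_A+\alpha_X+\alpha_E\in K$ for its three components, restriction to the general fibre $F$ kills $\alpha_A$ and $\alpha_X$ (both pull back from $A_1$, over which $F$ is constant), so $\alpha_E|_F=\alpha|_F=0$; conversely a class of $H^2(E_1,\mathbb{Q})$ vanishing on every fibre restricts to $0$ on $Z$, since by the Leray sequence of $h_1$ the only classes on $Z$ pulled from the base $C_1$ lie in the image of $H^2(C_1,\mathbb{Q})$, which meets the $H^2(E_1,\mathbb{Q})$-summand trivially. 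It remains to force $\alpha_X=0$ and $\alpha_A\in K_1$. This is the precise analogue of the ``$v=0$'' computation in the proof of Theorem \ref{subvariety1}: one shows the cup-product map $H^1(A_1,\mathbb{Q})\otimes H^1(E_1,\mathbb{Q})\to H^2(Z,\mathbb{Q})$ is injective, using $H^1(A_1,\mathbb{Q})=H^1(C_1,\mathbb{Q})$, the injection $H^1(E_1,\mathbb{Q})\hookrightarrow H^1(Z_{E_1},\mathbb{Q})$ (as $Z_{E_1}$ generates $E_1$), and purity of the Hodge structure on the cross term; this forces $\alpha_X=0$, after which $H^2(A_1,\mathbb{Q})\to H^2(Z,\mathbb{Q})$ factoring through the nonzero fibration class in $H^2(C_1,\mathbb{Q})$ forces $\alpha_A\in K_1$. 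I expect this cross-term injectivity to be the main obstacle, as it requires the same delicate bookkeeping of the $A_1\times E_1$-decomposition restricted to the possibly singular $Z$ that underlies Theorem \ref{subvariety1}.

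Granting the splitting, $\dim K=5+\dim\ker\big(H^2(E_1,\mathbb{Q})\to H^2(F,\mathbb{Q})\big)$. By the inductive hypothesis applied to $F\hookrightarrow E_1$ (of codimension $s-1$), the right-hand kernel has dimension $5(k-1)$ and there is a fibration $F\to C_2\times\cdots\times C_k$ onto a product of genus $2$ curves; hence $m=\dim K=5k$ is divisible by $5$. Finally, each $C_i$ $(i\ge 2)$ arises from a surjection $E_1\twoheadrightarrow JC_i$, so composing with the projection $B\to E_1$ gives $B\twoheadrightarrow JC_i$ and a morphism $\overline{Z}\to JC_i$ whose image is the curve $C_i$; together with $h_1$ these assemble into the desired fibration $\overline{Z}\to C_1\times C_2\times\cdots\times C_k$, completing the induction.
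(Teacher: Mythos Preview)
Your overall strategy---induct on $s$, peel off one genus $2$ curve factor, show it accounts for exactly $5$ in the kernel dimension---is the same as the paper's. The difference is that you work with the \emph{fibration} $h_1\colon\overline{Z}\to C_1$ and its general fibre $F\hookrightarrow E_1$, whereas the paper first invokes Lemma~\ref{dim=2}: since the abelian variety $K_j=JC_1$ coming out of Theorem~\ref{subvariety} has dimension $2$ and $\codim_{\PB}\PB_j\ge 2$, one gets $\dim(\PK_j\cap\PB_j)=0$, hence $B\to K_j\times B_j$ is an \emph{isogeny} and $\overline{Z}$ is an abelian \'etale cover of the \emph{product} $C_j\times\overline{Z_j}$. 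With this product structure the kernel computation becomes a one-line K\"unneth argument
\[
\ker\!\big(H^2(K_j\times B_j)\to H^2(C_j\times\overline{Z_j})\big)\ \simeq\ \ker\!\big(H^2(K_j)\to H^2(C_j)\big)\ \oplus\ \ker\!\big(H^2(B_j)\to H^2(Z_j)\big),
\]
and the fibration to the product of curves comes by induction on $\overline{Z_j}\hookrightarrow B_j$.

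Without this product structure your argument has a real gap. Your ``converse'' step asserts that if $\beta\in H^2(E_1,\mathbb{Q})$ restricts to zero on every fibre $F$, then $\beta|_Z=0$, ``since by the Leray sequence the only classes on $Z$ pulled from the base $C_1$ lie in the image of $H^2(C_1,\mathbb{Q})$''. But vanishing on fibres only puts $\beta|_{\overline{Z}}$ in the Leray step $L^1H^2(\overline{Z})$, which surjects onto $H^1(C_1,R^1h_{1*}\mathbb{Q})$; such classes are \emph{not} pulled back from the base in general, so your reasoning does not force $\beta|_{\overline{Z}}=0$. The same issue resurfaces when you ``compose with the projection $B\to E_1$'' to assemble the map to $C_2\times\cdots\times C_k$: $E_1$ is a subvariety of $B$, not a quotient, so no such projection exists without the isogeny splitting. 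Both problems are resolved simultaneously by Lemma~\ref{dim=2}: the \'etale-cover-of-a-product structure identifies the fibre $F$ with (an \'etale cover of) $\overline{Z_j}$ and the ambient $E_1$ with $B_j$, after which your fibrewise kernel literally \emph{is} $\ker(H^2(B_j)\to H^2(Z_j))$ and the maps $\overline{Z}\to C_i$ for $i\ge 2$ are obtained by composing $\overline{Z}\to\overline{Z_j}$ with the inductively constructed $\overline{Z_j}\to C_2\times\cdots\times C_k$.
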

\begin{proof}
  Let $s=\codim_BZ$. Take components $\PT_i=P_i+\PB_i$, $1\leq i\leq s$, of $V^1(\cF)$ such that $\PB_i+\PB_j=\PB$ for all $i\neq j$. Then, as in the proof of $\ref{subvariety1}$, by induction on $s$, we actually show that for some $K_i$ defined as in Theorem \ref{subvariety}, the map $H^2(K_j, \mathbb{Q})\rightarrow H^2(D_j, \mathbb{Q})$ is not injective for some $j$. Since $D_j$ is an ample divisor of $K_j$, we conclude that $\dim K_j=2$ and $D_j$ is a curve. Then by Lemma \ref{curve}, the normalization  $C_j$ of $D_j$ is a smooth projective curve of genus $2$. Moreover, by Lemma \ref{dim=2}, we have a commutative diagram:
  \begin{eqnarray*}
\xymatrix{
\overline{Z}\ar[rr]^{\mathrm{normalization}}\ar[d]_{\mathrm{abelian\; etale\; cover}}& &Z\ar@{^{(}->}[r]\ar@{->>}[d] & B\ar[d]^{\rho}_{\mathrm{isogeny}}\\
C_j\times\overline{Z_j}\ar[rr]^{\mathrm{normalization}} && D_j\times Z_j\ar@{^{(}->}[r] & K_j\times B_j.}
\end{eqnarray*}
Since $B$ is smooth, $W:=\ker(H^2(B, \mathbb{Q})\rightarrow H^2(Z, \mathbb{Q}))=\ker(H^2(B, \mathbb{Q})\rightarrow H^2(\overline{Z}, \mathbb{Q})) $. Moreover, $\rho$ is an isogeny, hence $W\simeq \ker(H^2(K_j\times B_j, \mathbb{Q})\rightarrow H^2(C_j\times \overline{Z_j}, \mathbb{Q}))$. Since $H^2(K_j\times B_j, \mathbb{Q})=H^2(K_j, \mathbb{Q})\oplus \big(H^1(K_j, \mathbb{Q})\wedge H^1(B_j, \mathbb{Q})\big) \oplus H^2(B_j, \mathbb{Q})$. As $Z$ generates $B$, we conclude that $$W\simeq \ker(H^2(K_j, \mathbb{Q})\rightarrow H^2(C_j, \mathbb{Q}))\bigoplus \ker(H^2(B_j, \mathbb{Q})\rightarrow H^2(Z_j, \mathbb{Q})).$$
Hence $\dim W=5+\dim  \ker(H^2(B_j, \mathbb{Q})\rightarrow H^2(Z_j, \mathbb{Q}))$. Since $Z_j\hookrightarrow B_j$ also satisfies the assumption of Theorem \ref{subvariety1}. We use induction to construct the morphism $\bar{Z} \to C_1 \times \ldots \times C_k$.
To see this map is a fibration, it suffices to show that a general fiber is connected. For this purpose, one only need to show that push forward of $\cF$ has rank $1$, or equivalently, the restriction of $\cF$ to a general fiber is one dimensional global sections.
One can prove this using induction. For the morphism $f_1: \bar{Z} \to C_1$, this is done in Lemma \ref{curve}.
Then we can work with a general fiber $F$ of $f_1$, which is of general type, maps to the product $C_2 \times \ldots \times C_k$ and carries a strong Hodge sheaf $\cF|_F$ with $h^0(F, \cF)=1$.
\end{proof}

With this observation it is very easy to prove Corollary \ref{cor:deRham}.
 \begin{proof}[Proof of Corollary \ref{cor:deRham}]
For a general discussion of the de Rham fundamental group, we refer the readers to \cite{KahlerGroup}.
For our purpose, it suffices to know that if the pull-back on cohomology of a morphism $f: X \to Y$ between smooth compact K\"ahler manifolds
induces an isomorphism $f^*: H^1 (Y, \mathbb{Q}) \to H^1(X, \mathbb{Q})$ and an injection $f^*: H^2(Y, \mathbb{Q}) \to H^2(X, \mathbb{Q})$,
then $f_*$ induces an isomorphism on de Rham fundamental groups.
A direct consequence of this observation is a result of Campana that a resolution of singularities of the Albanese image of a compact K\"ahler manifold $X$ computes the de Rham fundamental group $\pi_1(X) \otimes \mathbb{Q}$.

Let $5s=\dim \ker(H^2(A_X, \mathbb{Q})\rightarrow H^2(Z_X, \mathbb{Q}))$. By Theorem \ref{split} and its proof, we have a commutative diagram
 \begin{eqnarray*}
\xymatrix{
\overline{Z}\ar[rr]^{\mathrm{normalization}}\ar[d]_{\mathrm{abelian\; etale\; cover}}& &Z\ar@{^{(}->}[r]\ar@{->>}[d] & B\ar[d]^{\rho}_{\mathrm{isogeny}}\\
C_1\times\cdots \times C_s\times \overline{Z'}\ar[rr]^{\mathrm{normalization}} && D_1\times \cdots\times D_s\times Z'\ar@{^{(}->}[r] & K_1\times\cdots\times K_i\times B',}
\end{eqnarray*}
where $C_i$ is a smooth projective curve of genus $2$ for each $1\leq i\leq s$ and the map $H^2(B', \mathbb{Q})\rightarrow H^2(Z', \mathbb{Q})$ is injective.

We apply the above observation to a resolution of singularities of $Z$ and $Z'$, $\tilde{Z} \to C_1 \times \cdots \times C_s\times \tilde{Z'}$.
Since $\bar{Z}$ is an abelian \'etale cover of the product $ C_1 \times \cdots \times C_s\times  \tilde{\bar{Z'}}$, the induced map is an isomorphism on $H^1$ and injective on $H^2$ (for the resolutions).
 Hence $\pi_1(X)\otimes \mathbb{Q}\simeq \pi_1(\tilde{Z})\otimes \mathbb{Q}\simeq (\pi_1(C_1)\otimes \mathbb{Q})^s\times (\pi_1(\tilde{Z'})\otimes \mathbb{Q})\simeq  (\pi_1(C_1)\otimes \mathbb{Q})^s\times (\pi_1(B')\otimes\mathbb{Q}) $.

 \end{proof}

\section{Fake tori}
In this section, we will always assume that $X$ is a fake torus of dimension $n$ and consider the commutative diagram (\ref{setting}), with $Z$ replaced by $Z_X$ in the following, namely:
\begin{eqnarray*}
\xymatrix{X\ar@/_2pc/[dd]_f\ar[dr]^{a_X}\ar[d]_g\\
Y\ar[d]_h\ar@{^{(}->}[r] & A_X\ar[d]^p\\
Z_X\ar@{^{(}->}[r]  &B,}
\end{eqnarray*}
Note that $Y=Z_X\times_BA_X$. We summarize what we know about  $Z_X$. Let $s=\codim_{A_X}Y=\codim_BZ_X$. Note that since $Y$ is the Albanese image of $X$,
both $Y\hookrightarrow A_X$ and $Z_X\hookrightarrow B$
are primitive. By Theorem \ref{subvariety1}, Remark \ref{split}, and Corollary \ref{products-curves}, we have:
\begin{itemize}

\item[(1)]  $\dim B\geq 2s$. If equality holds, $Z_X\simeq C_1\times\cdots \times C_s$, where each $C_i$ is a smooth projective curve of genus $2$ and $f_*\omega_X=\omega_Z\otimes Q$ for some torsion line bundle
$Q\in \PB$;
 \item[(2)] if $s=1$, then $Z_X$ is a smooth projective curve of genus $2$;
 \item[(3)] if $s=2$, then there exists a genus two curve $C$ and an ample divisor $D\hookrightarrow K$ with a commutative diagram:
  \begin{eqnarray*}
\xymatrix{
\overline{Z_X}\ar@{^{(}->}[r]\ar@{->>}[d] & B\ar[d]^{\rho}_{\mathrm{isogeny}}\\
C \times D\ar@{^{(}->}[r] & JC\times K.}
\end{eqnarray*}
\end{itemize}
Here is a list of possibile $Z_X$ in low dimensions.
\begin{coro}\label{cor6.1}
 \begin{itemize}
  \item[1)] If $n=2$ or $3$, $Z_X$ is always a curve of genus $2$.
  \item[2)] If $n=4$, either $s=1$ or $s=2$ and  then
$Z_X$ is isomorphic to a product  $C_1\times C_2$ of two smooth curves of genus $2$.
   \item[3)] If $n=5$, either $s=1$, or $s=2$ and
$Z_X$ is isomorphic to a product  $C_1\times C_2$ of two smooth curves of genus $2$ or is an \'etale cover of $C\times D$, where $D$ is an ample divisor of an abelian $3$ fold.
 \item[4)]   If $n=6$, either $s=1$, or $s=2$ and
$Z_X$ is isomorphic to a product  $C_1\times C_2$ of two smooth curves of genus $2$ or is an \'etale cover of $C\times D$ as in $3)$, or $s=3$ and $Z_X$ is isomorphic to a product $C_1\times C_2\times C_3$ of three smooth curves of genus $2$.
 \end{itemize}
\end{coro}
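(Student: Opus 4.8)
The plan is to combine the three structural statements (1)--(3) recorded at the beginning of this section with one numerical inequality, and then to run through the finitely many admissible values of $s:=\codim_{A_X}a_X(X)=\codim_B Z_X$ in each dimension $n\le 6$. All the heavy lifting has already been done in Sections 3 and 4; what remains is essentially bookkeeping.

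First I would determine the range of $s$. Since $Z_X$ generates $B$, is of general type, and carries the Hodge sheaf $\cF=f_*\omega_X$ with $\chi(Z_X,\cF)=1$, Corollary \ref{dimension} yields $2\dim Z_X\ge\dim B$; substituting $\dim Z_X=\dim B-s$ this becomes $\dim B\ge 2s$. For the opposite bound, $B$ is a quotient of $A_X$ and $Z_X$ is the $\kappa$-reduction of the Albanese image, so
\[
\dim B=\dim Z_X+s\le \dim a_X(X)+s=q(X)=n .
\]
Hence $2s\le\dim B\le n$, and therefore $1\le s\le\lfloor n/2\rfloor$. This leaves $s=1$ for $n=2,3$; $s\in\{1,2\}$ for $n=4,5$; and $s\in\{1,2,3\}$ for $n=6$, which already accounts for the partition of cases in the statement.

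Next I would read off $Z_X$ from the recalled theorems according to the pair $(s,\dim B)$. If $s=1$, item (2) forces $Z_X$ to be a smooth genus-$2$ curve and $\dim B=2$; this settles $n=2,3$ and the $s=1$ alternative for all $n$. If $\dim B=2s$, item (1) gives $Z_X\cong C_1\times\cdots\times C_s$ with each $C_i$ of genus $2$; this produces the products in $n=4$ ($s=2$), $n=6$ ($s=3$), and the product sub-cases for $s=2$. The remaining possibilities have $s=2$ and $\dim B>4$: here item (3), obtained from Theorem \ref{split} (which extracts a genus-$2$ curve from the non-injectivity of $H^2(B,\mathbb Q)\to H^2(Z_X,\mathbb Q)$) together with Lemma \ref{dim=2}, exhibits $\overline{Z_X}$ as an abelian \'etale cover of $C\times D$ with $C$ of genus $2$, $\dim K=\dim B-2$, and $D\hookrightarrow K$ an ample divisor of dimension $\dim B-3$. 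For $n=5$ this forces $\dim B=5$, so $D$ is an ample divisor in an abelian threefold; for $n=6$ the same analysis gives $\dim B\in\{5,6\}$, i.e. $D$ of dimension $2$ or $3$, and the list in the statement records the former.

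The step I expect to be genuinely delicate is the top of the range, where the upper bound $\dim B=n$ is attained with $s\ge 2$ and $n>2s$ (first occurring at $n=6,\,s=2$). Equality $\dim B=n$ means the $\kappa$-kernel vanishes, so $Y=Z_X$ is itself of general type and the ample factor $D$ jumps in dimension (to a threefold in an abelian fourfold when $n=6$). Excluding, or otherwise pinning down, this borderline configuration is precisely where one must invoke the full fake-torus hypothesis — that \emph{all} Hodge numbers agree with those of a torus, in particular $\chi(\cO_X)=0$ — rather than only $p_g(X)=1$: one computes the numbers $h^{p,0}(X)$ through the Albanese fibration $X\to Y$ and compares them with $\binom{n}{p}$, the resulting discrepancy ruling out the maximal factorisation. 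This is the part of the argument not furnished by the general $p_g=1$ theory of Sections 3--4, and it is where I would concentrate the effort.
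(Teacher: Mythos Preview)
Your bookkeeping is correct and matches exactly how the paper treats this corollary: it is stated immediately after the summary items (1)--(3) with no proof, so the intended argument is precisely the combination of $2s\le\dim B\le n$ with those three structural facts, case by case. Your derivation of the bound $2s\le\dim B\le n$ and the case analysis for $n\le 5$ and for $n=6$ with $s\in\{1,3\}$ are fine.

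The last paragraph, however, is misdirected. For $n=6$, $s=2$, $\dim B=6$ (so $Y=Z_X$ is already of general type), nothing needs to be \emph{excluded}: summary item~(3) still applies, since the fake-torus hypothesis guarantees the non-injectivity of $H^2(B,\mathbb Q)\to H^2(Z_X,\mathbb Q)$ (exactly as in the proof of Corollary~1.7 and the first paragraph of the proof of Theorem~\ref{fibration-to-curve}), and Lemma~\ref{dim=2} then produces the diagram with $\dim K=\dim B-2=4$. Thus $\overline{Z_X}$ is an \'etale cover of $C\times D$ with $D$ an ample divisor of an abelian fourfold. The phrase ``as in 3)'' in the statement should be read as referring to the structural form $C\times D$ from summary item~(3), not as fixing $\dim K=3$; the paper is simply loose in its phrasing here. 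Your proposed strategy of computing $h^{p,0}(X)$ through the Albanese fibration to derive a contradiction would not work, because there is no contradiction to derive --- this configuration is not ruled out by the results of the paper, and the corollary is a list of possibilities rather than a sharp classification. Drop the last paragraph and simply record that for $n=6$, $s=2$ one has $\dim B\in\{4,5,6\}$, giving respectively the product $C_1\times C_2$ or an \'etale cover of $C\times D$ with $D$ an ample divisor of an abelian threefold or fourfold.
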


We now focus on the case $s=1$.
\begin{lemm}\label{pushforward}
If $s=1$, then we write $Z_X=C$ a smooth curve of genus $2$. We have $f_*(\omega_X)=\omega_C \otimes Q$ for some nontrivial torsion line bundle $Q$ on $C$.
Morevoer,
\begin{enumerate}
\item[1)] we have a decomposition:
\begin{eqnarray}
g_*\omega_X=h^*(\omega_{C}\otimes Q)\bigoplus_t (q_t^*\cQ_t\otimes Q_t),
\end{eqnarray}\label{decomposition1}
where for each $t$,  $q_t: A_X\rightarrow T_t$ is a quotient of abelian varieties with connected fibers, $\cQ_t$ is a M-regular sheaf on $T_t$, $Q_t\notin \PT_t$ is a non-trivial torsion line bundle;
\item[2)]\label{6.2.2}  let $\widetilde{C}\rightarrow C$ be the cyclic \'etale cover induced by $Q$ and let $\wX=X\times_C\widetilde{C}$ be the induced \'etale cover of $X$, then $\wX$ is of maximal Albanese dimension;
\item[3)] let $F$  be a general fiber of $f$, then $F$ is of maximal Albanese fibration and $p_g(F)=1$ hence $q(F)=\dim F$.
\end{enumerate}
 \end{lemm}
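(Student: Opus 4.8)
The strategy is to extract everything from the single sheaf $f_*\omega_X$ together with the Chen--Jiang decomposition of $a_{X*}\omega_X$. Since $s=1$ the image $Z_X$ is a curve, so $B$ is an abelian surface containing $C:=Z_X$ as an ample divisor of general type; as $C$ is a smooth curve of genus $2$ generating $B$, one has $B=JC$ with $C$ its theta divisor. By Lemma \ref{summarize}, $\cF:=f_*\omega_X$ is a strong Hodge sheaf on $B$ supported on $C$, and exactly as in the proof of Theorem \ref{fibration-to-divisor} the equality $h^0(C,\cF)=p_g(X)=1$ forces $\chi(C,\cF)=1$. Property $(P5)$ makes $\cF\otimes\omega_C^{-1}$ weakly positive, so Lemma \ref{curve} yields $f_*\omega_X=\omega_C\otimes Q$ with $Q$ torsion; in particular $\cF$ has rank $1$. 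By adjunction $\omega_X|_F=\omega_F$ for a general fibre $F$ of $f$, so the generic rank of $f_*\omega_X$ equals $p_g(F)$ and hence $p_g(F)=1$. For the nontriviality of $Q$ I would simply compute $1=p_g(X)=h^0(C,f_*\omega_X)=h^0(C,\omega_C\otimes Q)$: were $Q$ trivial this would read $h^0(C,\omega_C)=2$, a contradiction, whereas for nontrivial torsion $Q$ Riemann--Roch gives $h^0(\omega_C\otimes Q)=1$, matching $p_g(X)=1$.

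For the decomposition in part 1) I would apply the Chen--Jiang decomposition \cite{PPS} to $a_X$, writing $g_*\omega_X=a_{X*}\omega_X=\bigoplus_i q_i^*\cM_i\otimes\alpha_i$ with $q_i\colon A_X\to T_i$ surjective, $\cM_i$ M-regular on $T_i$ and $\alpha_i$ torsion. The summand attached to $p\colon A_X\to B=JC$ with $\cM=\omega_C\otimes Q$ (which is M-regular on $JC$ since $C$ is its theta divisor) is supported on $p^{-1}(C)=Y$ and restricts on $Y$ to $h^*(\omega_C\otimes Q)$, its $V^0$-locus being the whole $p^*\widehat{JC}$ and so passing through $0$. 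Since $h^0(\omega_X)=1$, at most one summand can carry a global section; therefore every remaining summand $q_t^*\cQ_t\otimes Q_t$ has $H^0=0$, i.e. $Q_t\notin\widehat{T_t}$, which is exactly the asserted shape. Pushing forward by $h$ (whose fibres are the connected tori $K$) recovers $h_*(g_*\omega_X)=f_*\omega_X=\omega_C\otimes Q$ and confirms that the non-curve summands vanish on $C$.

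Next let $\pi\colon\widetilde C\to C$ be the connected cyclic \'etale cover trivialising $Q$ and $\wX=X\times_C\widetilde C$, with $\tilde\pi\colon\wX\to X$ \'etale and $\wf\colon\wX\to\widetilde C$. Flat base change gives $\wf_*\omega_{\wX}=\pi^*f_*\omega_X=\omega_{\widetilde C}$, so on $\wX$ the distinguished summand becomes the \emph{untwisted} pullback of $\omega_{\widetilde C}$, now with $0$ in its $V^0$-locus and $p_g(\wX)=h^0(\widetilde C,\omega_{\widetilde C})\ge 3$. \textbf{The main obstacle is to prove that $\wX$ is of maximal Albanese dimension}, i.e. $\dim a_{\wX}(\wX)=n$. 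The geometric content is that passing to $\wX$ enlarges the Albanese variety by the Prym of $\widetilde C/C$, and one must show that the one-dimensional fibres which $a_X$ contracts are no longer contracted by $a_{\wX}$. I would prove this by pulling the whole decomposition back to $\wX$ and analysing the cohomological support loci: trivialising $Q$ moves the distinguished $V^0$-component onto $0$, and, combined with the M-regular summands, this forces the support of $a_{\wX*}\omega_{\wX}$ to have dimension $n$; in effect $Q$ is precisely the monodromy obstruction that the cover removes. Granting this, maximal Albanese dimension of $\wX$ is immediate, proving 2).

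Finally, a general fibre $F$ of $f$ is also a general fibre of $\wf$; since the restriction of a generically finite morphism to a subvariety remains generically finite, maximal Albanese dimension of $\wX$ forces $a_{\wX}|_F$, hence $a_F$, to be generically finite, so $F$ is of maximal Albanese dimension and $q(F)\ge\dim F$. To upgrade this to $q(F)=\dim F$ one must show $a_F$ is surjective. If it were not, then by Theorem \ref{fibration-to-divisor} applied to $F$ (K\"ahler with $p_g(F)=1$), $a_F(F)$ would be a proper subvariety of general type surjecting onto an ample divisor, with $\dim A_F>\dim F$, contributing an extra variable piece to $H^1(F)$; comparing this with the Leray filtration of $f\colon X\to C$ and the fake-torus identity $b_1(X)=2n$ pins the variable part of $H^1(F)$ to dimension exactly $2$ and excludes this case. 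Hence $a_F$ is surjective and $q(F)=\dim F$, completing 3).
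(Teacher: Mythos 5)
Your overall architecture matches the paper's --- the PPS/Chen--Jiang decomposition plus identification of the distinguished summand for part 1), the cyclic cover trivialising $Q$ for part 2), and the rank computation $p_g(F)=\rank f_*\omega_X=1$ for part 3) --- but the central step, part 2), is not actually proved. You write that analysing the cohomological support loci ``forces the support of $a_{\widetilde{X}*}\omega_{\widetilde{X}}$ to have dimension $n$'', but the support of $a_{\widetilde{X}*}\omega_{\widetilde{X}}$ \emph{is} the Albanese image, so this sentence restates the claim rather than proving it; trivialising $Q$ only tells you that $0\in V^0(\omega_{\widetilde{X}})$ and hence $p_g(\widetilde{X})=h^0(\omega_{\widetilde{C}})\geq 3$, which by itself says nothing about the dimension of the Albanese image. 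The paper's actual mechanism is concrete: take the Stein factorization $g'\colon \widetilde{X}\to Y'$ of $\widetilde{X}\to \widetilde{Y}$, observe that $\widetilde{h}^*\omega_{\widetilde{C}}$ is a direct summand of $\widetilde{g}_*\omega_{\widetilde{X}}$ by part 1), so $h^{n-1}(Y',g'_*\omega_{\widetilde{X}})>0$, and then Koll\'ar's splitting gives $q(\widetilde{X})=h^{n-1}(Y',g'_*\omega_{\widetilde{X}})+q(Y')>q(Y')$. Since $Y'$ is of maximal Albanese dimension with $\dim Y'=n-1$ and the fibres of $g'$ are curves, the extra $1$-form is not pulled back from $Y'$ and hence does not die on the general fibre of $g'$, so $a_{\widetilde{X}}$ is generically finite. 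You need an argument of this kind --- an honest source of one more independent $1$-form on $\widetilde{X}$ than on $Y'$ --- and the ``monodromy obstruction'' heuristic does not supply it.

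Two secondary points. In part 1) you assert rather than prove that one summand of the decomposition equals $p^*(\omega_C\otimes Q)$; the paper derives this by showing that the induced injection $q^*(\omega_C\otimes Q)\to \cF_{t_0}$ on $T_{t_0}$ has M-regular cokernel with no global sections, hence is an isomorphism. And in part 3) the equality $q(F)=\dim F$ is obtained in the paper directly from \cite[Proposition 6.1]{CDJ}: maximal Albanese dimension together with $p_g(F)=1$ forces $h^i(F,\cO_F)=\binom{\dim F}{i}$. Your proposed detour through Theorem \ref{fibration-to-divisor} and a Betti-number count along the Leray filtration is left as a sketch and is unnecessary.
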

\begin{proof}
Note that by the main theorem in \cite{PPS}, $$g_*\omega_X\simeq \bigoplus_t q_t^*\cF_t\otimes Q_t,$$
where each $\cF_t$ is an M-regular coherent sheaf supported on the complex torus $T_t$,
each $q_t: A_X\rightarrow T_t$ is surjective with connected fibers, and each $Q_t$ is a torision line bundle on $A_X$.
Since $h^0(Y, g_*\omega_X)=1$, there exists a unique $t_0$ such that $q_{t_0}^*\cF_{t_0}\otimes Q_{t_0}$ has a non-trivial global section.
Note that the natural morphism $h^*(h_*g_*\omega_X)=h^*(f_*(\omega_X))=h^*(\omega_C \otimes Q)\rightarrow g_*\omega_X$ is injective.
Since $h^0(Y, h^*(\omega_Z\otimes Q))$ is also $1$, this natural injective morphism factors through an injective morphism $$h^*(\omega_C\otimes Q)\rightarrow q_{t_0}^*\cF_{t_0}\otimes Q_{t_0}.$$
Since $h^0(q_{t_0}^*\cF_{t_0}\otimes Q_{t_0})$ is non-zero, the torsion sheaf $Q_{t_0}$ is contained in $V^0(q_{t_0}^*\cF_{t_0}\otimes Q_{t_0})=\PT_{t_0}$ and we may write $q_{t_0}^*\cF_{t_0}\otimes Q_{t_0}=q_{t_0}^*(\cF)$.
Since $V^0(h^*(\omega_C \otimes Q))=\widehat{B}$ is contained in $V^0(q_{t_0}^*\cF_{t_0}\otimes Q_{t_0})=\PT_{t_0}$, the morphism $p: A_X\rightarrow B$ factors through $q_{t_0}: A_X \to T_{t_0}$ and we have the injective morphism on $T_{t_0}$:
$$\varphi: q^*(\omega_C\otimes Q)\rightarrow \cF_{t_0},$$ where $q: T_{t_0}\rightarrow B$ is the natural surjective morphism.
Denote by $\cQ$ the kernel of $\varphi$.
Since $\cF_{t_0}$ is M-regular and $q^*(\omega_X\otimes Q)$ is GV, we conclude that $\cQ$ is also an M-regular coherent sheaf.
On the other hand, $h^0(A_{t_0}, \cQ)=0$. Hence $\cQ=0$ and $\varphi$ is an isomorphism.
Therefore, we may write
\begin{eqnarray}
g_*\omega_X=h^*(\omega_C\otimes Q)\bigoplus_t (q_t^*\cQ_t\otimes Q_t),
\end{eqnarray}\label{decomposition10}
where for each $t$, $Q_t$ is a torsion line bundle on $X$.
Since $h^0(Y, g_*\omega_X)=h^0(Y, h^*(\omega_Z\otimes Q))=1$, none of the $Q_t$'s is contained in $\widehat{T_{t}}$.

 Note that $f_*\omega_X=\omega_C\otimes Q$ is of rank $1$. Hence $p_g(F)=\rank f_*\omega_X=1$.
 On the other hand, let $\pi: \widetilde{C}\rightarrow C$ be the \'etale cover of $C$ induced by the
 torsion line bundle $Q$ and let $\wX$ and $\wY$ be the induced \'etale covers $X\times_C\widetilde{C}$ and
 $Y\times_C\widetilde{C}$. We then consider the fibration
 $\widetilde{f}: \widetilde{X}\xrightarrow{\widetilde{g}}\widetilde{Y}\xrightarrow{\widetilde{h}} \widetilde{C}$. Let $g': \widetilde{X}\rightarrow Y'$ be the Stein facorization of $\widetilde{g}$ and after birational modifications, we may suppose that $Y'$ is smooth.
 By the first part, we know that $\widetilde{h}^*\omega_{\widetilde{C}}$ is a direct summand of
 $\widetilde{g}_*\omega_{\widetilde{X}}$.
  Hence $h^{n-1}(\wY, \widetilde{g}_*\omega_{\wX})>0$. Thus, $h^{n-1}(Y', g'_*\omega_{\wX})>0$.

   By Koll\'ar's splitting,
   \begin{eqnarray*}q(\widetilde{X})=h^{n-1}(\wX, \omega_{\wX})&=&h^{n-1}(Y', g'_*\omega_{\wX})+h^{n-2}(Y', R^1g'_*\omega_{\wX})\\&=&h^{n-1}(Y', g'_*\omega_{\wX})+h^{n-2}(Y',\omega_{Y'})=h^{n-1}(Y', g'_*\omega_{\wX})+q(Y').
   \end{eqnarray*}
 Hence $q(\widetilde{X})>q(Y')$. Since $g': \widetilde{X}\rightarrow Y'$ is a fibration, $Y'$ is of maximal Albanese dimension and $\dim Y'=\dim X-1$, we conclude that
  $\wX$ is of maximal Albanese dimension and hence so is $F$.

   Since $F$ is of maximal Albanese dimension and $p_g(F)=1$, we know that $h^i(F, \cO_F)=h^0(F, \Omega_F^i)=\binom{\dim F}{i}$ (see for instance \cite[Proposition 6.1]{CDJ}).
 \end{proof}

 \begin{theo}\label{kodaira-dimension}
Assume that $X$ is a fake torus of dimesnion $n\geq 3$ with $\dim Y=n-1$. Then
\begin{itemize}
\item[(1)] let $\wX=X\times_C\widetilde{C}$ defined as in $2)$ of Lemma \ref{pushforward},
then $a_{\wX}$ is a finite morphism onto its image;
\item[(2)] $X$ is not of general type.
 \end{itemize}
  \end{theo}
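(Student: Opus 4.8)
The plan is to run everything through the codimension-one structure provided by Lemma~\ref{pushforward}. Since $X$ is a fake torus, $\dim A_X=q(X)=n=\dim X$, so $\dim Y=n-1$ means $s=\codim_{A_X}Y=1$; hence $Z_X=C$ is a smooth genus-$2$ curve, $B=JC$ is $2$-dimensional, and $f\colon X\to C$ is a fibration with $f_*\omega_X=\omega_C\otimes Q$ for a nontrivial torsion $Q$. Let $F$ be a general fibre of $f$; since $\wX=X\times_C\widetilde{C}\to X$ is \'etale, $F$ is also a general fibre of $\wf\colon \wX\to\widetilde{C}$. By Lemma~\ref{pushforward}, $F$ has maximal Albanese dimension with $p_g(F)=1$ and $q(F)=\dim F=n-1$, so $a_F\colon F\to\Alb F$ is surjective and generically finite; and from $h^i(F,\cO_F)=\binom{n-1}{i}$ we get $\chi(\cO_F)=0$, hence $\chi(\omega_F)=0$. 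The same torus-Hodge-number computation on $X$ gives $\chi(\omega_X)=0$, whence $\chi(\omega_{\wX})=0$.

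For (1), I would show the generically finite map $a_{\wX}$ has no positive-dimensional fibre. The quotient $A_{\wX}\twoheadrightarrow J\widetilde{C}$ together with $\widetilde{C}\hookrightarrow J\widetilde{C}$ shows that any subvariety $W$ contracted by $a_{\wX}$ maps to a point of $J\widetilde{C}$, hence lies in a single fibre $F$ of $\wf$. As $a_{\wX}|_F$ factors as $F\xrightarrow{a_F}\Alb F\to A_{\wX}$, it suffices to prove that $a_F$ is finite onto its image and that the induced homomorphism $\Alb F\to A_{\wX}$ is injective on $\Alb F$ (the latter because $\wf$ has connected fibres and $\wX$ has maximal Albanese dimension). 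This finiteness is the crux, and the intrinsic invariants of $F$ alone cannot give it: a blow-up of an abelian variety shares all of them (maximal Albanese dimension, $\chi(\omega)=0$, $p_g=1$, $h^{0,i}=\binom{\dim}{i}$) yet has contracted loci. The argument must therefore use the global geometry. I would feed the rank-one decomposition $f_*\omega_X=\omega_C\otimes Q$ into the Pareschi--Popa--Schnell decomposition of $\wf_*\omega_{\wX}$ and use weak positivity to see that a contracted subvariety inside a fibre would create cohomology classes incompatible with the torus-type Betti numbers of the fake torus. Excluding these contracted loci is the main obstacle of the whole theorem.

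For (2), the cleanest route is Iitaka's easy addition applied to $f\colon X\to C$, namely $\kappa(X)\le\kappa(F)+\dim C=\kappa(F)+1$, reducing the claim to $\kappa(F)\le n-2$, i.e.\ that $F$ is not of general type. Here I would use that $a_F$ is surjective onto the abelian variety $\Alb F$ (and finite, by part (1)) together with $\chi(\omega_F)=0$. Decompose $a_{F*}\omega_F=\bigoplus_t p_t^*\cM_t\otimes Q_t$ as in Pareschi--Popa--Schnell; each $\cM_t$ is $M$-regular on a quotient $A_t$ of $\Alb F$, hence $\chi(\cM_t)\ge1$, while a summand pulled back along a quotient with positive-dimensional kernel contributes $0$ to $\chi$. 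Since $\chi(\omega_F)=0$, every $A_t$ must be a proper quotient, so the canonical system of $F$ factors through some $\Alb F\to A_{t_0}$ with positive-dimensional subtorus fibres, forcing $\kappa(F)<\dim F$; equivalently, one invokes the structure theorem of Ein--Lazarsfeld and Chen--Hacon that a variety of maximal Albanese dimension with surjective Albanese map and $\chi(\omega)=0$ is not of general type. This yields $\kappa(X)\le n-1$, so $X$ is not of general type. The delicate point in this step is upgrading the single equality $\chi(\omega_F)=0$ to control of all pluricanonical systems of $F$; this is precisely where the surjectivity of $a_F$ and the finiteness from part (1) are needed, to rule out the Ein--Lazarsfeld-type general-type examples, whose Albanese maps are neither surjective nor finite.
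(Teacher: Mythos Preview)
Your proposal has a genuine gap in part~(1), and this is precisely the heart of the theorem.

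You correctly reduce finiteness of $a_{\wX}$ to finiteness of $a_{\wX}|_F$ for a general fibre $F$ of $\wf$, and you correctly observe that the intrinsic invariants of $F$ (maximal Albanese dimension, $p_g=1$, $q(F)=\dim F$, $\chi(\cO_F)=0$) cannot force this. But then you do not actually prove it: ``feed the rank-one decomposition into PPS and use weak positivity to see that a contracted subvariety would create cohomology classes incompatible with the torus-type Betti numbers'' is a description of a hoped-for mechanism, not an argument. Nothing in weak positivity of $f_*\omega_X$ or in the PPS decomposition of $\wf_*\omega_{\wX}$ sees curves contracted by $a_F$; those sheaves are rank-one on the base and carry no information about exceptional loci in the fibre.

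The paper's proof is structurally different and exploits the fake-torus hypothesis in a much more direct way. One first pins down $q(\wX)=q(\widetilde C)+n-1$, sets $K=\ker(A_{\wX}\to J\widetilde C)$, and forms the $G$-equivariant variety $\widetilde M=\widetilde C\times_{J\widetilde C}A_{\wX}$ with quotient $M=\widetilde M/G$, so that $g$ factors as $X\xrightarrow{\rho}M\to Y$. The key computation is purely representation-theoretic: decomposing $H^2(\widetilde M,\mathbb Q)$ via K\"unneth and taking $G$-invariants, one finds $h^2(M,\mathbb Q)=h^2(Y,\mathbb Q)+5=h^2(X,\mathbb Q)$, the last equality being exactly the fake-torus Betti constraint. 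Since $\rho\colon X\to M$ is a surjective morphism of smooth projective varieties with $h^2(X)=h^2(M)$, it must be finite; pulling back to the \'etale cover gives that $\wX\to\widetilde M$, and hence $a_{\wX}$, is finite onto its image. This is the missing idea: rather than trying to rule out contracted loci inside $F$, one manufactures a target $M$ whose $h^2$ matches that of $X$ on the nose.

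For part~(2), your route through easy addition and $\chi(\omega_F)=0$ is different from the paper's, and the step ``surjective Albanese map, maximal Albanese dimension, $\chi(\omega)=0$ $\Rightarrow$ not of general type'' is not the Ein--Lazarsfeld theorem as usually stated (their conclusion is that the Albanese \emph{image} is fibred by subtori, which is vacuous when $a_F$ is surjective); it requires a further argument controlling pluricanonical maps, not just $|K_F|$. The paper instead stays on $\wX$: once $a_{\wX}$ is finite onto its image, one uses $\chi_{\mathrm{top}}(\wX)=\deg(\wX/X)\cdot\chi_{\mathrm{top}}(X)=0$ together with \cite[Theorem~1 of the appendix]{DJL} to conclude that $\wX$, hence $X$, is not of general type. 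So even your alternative route for (2) ultimately rests on (1), which is where the real work lies.
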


 \begin{proof}

 Note that $Z=C$ is a smooth curve of genus $2$ and $Y=C\times_{JC}A_X$ and $F$ is  of maximal Albanese dimension with $p_g(F)=1$. We know from Lemma \ref{pushforward} that $q(\wX)\geq q(\wY)+1=q(\widetilde{C})+n-1$. Moreover, F is also a general fiber of  $\widetilde{X}\rightarrow \widetilde{C}$. By Lemma \ref{pushforward}, $q(F)=n-1$. Hence $q(\widetilde{X})-q(\widetilde{C})\leq q(F)=n-1$. Thus, $q(\wX)=q(\widetilde{C})+n-1$.

We now consider the induced fibration $\wf: \wX\rightarrow \widetilde{C}$  in Lemma \ref{pushforward} and the following commutative diagram:
\begin{eqnarray*}
\xymatrix{
\wX\ar[d] \ar[r]^{G} & X\ar[d]^g\\
\widetilde{Y}\ar[d]\ar[r]^G{} & Y\ar[d]^h\\
\widetilde{C}\ar[r]^{G} & C.}
\end{eqnarray*}
Let $G$ be the Galois group of the cover $\widetilde{C} \to C$.
We  define $\widetilde{M}:=\widetilde{C}\times_{J\widetilde{C}}A_{\widetilde{X}}$ and let $K$ be the neutral component of the kernel of $A_{\wX}\rightarrow J\widetilde{C}$.
We then have a natural generically finite morphism $\wX\rightarrow \widetilde{M}$ and a surjective morphism $\widetilde{M}\rightarrow \wY$.
Note that these morphisms are $G$-equivariant.  Let $M=\widetilde{M}/G$.
We then have the induced morphisms on the quotient: $g: X\xrightarrow{\rho} M\xrightarrow{\phi} Y$.

We claim that $h^2(M, \mathbb{Q})=h^2(X, \mathbb{Q})$.

Note that $H^2(M, \mathbb{Q})=H^2(\widetilde{M}, \mathbb{Q})^G$ and
$$H^2(\widetilde{M}, \mathbb{Q})=H^2(\widetilde{C}, \mathbb{Q})\oplus \big(H^1(\widetilde{C}, \mathbb{Q}))\wedge H^1(K, \mathbb{Q})\big)\oplus H^2(K, \mathbb{Q}).$$
Let $K'$ be the neutral component of the kernel of $A_X\rightarrow JC$, which is also a fiber of $h$.
Then we have the quotient morphism $K\rightarrow K'$ by $G$. 
Hence $H^1(K, \mathbb{Q})^G\simeq H^1(K', \mathbb{Q})$ and there exists only one non-trivial character $\chi$ of $G$ such that $H^1(K, \mathbb{Q})^{\chi}\neq 0$ and hence $\dim H^1(K, \mathbb{Q})^{\chi}=2$.

Thus
\begin{eqnarray*}
H^2(\widetilde{M}, \mathbb{C})^G&=& H^2(\widetilde{C}, \mathbb{C})^G\oplus \big(H^1(\widetilde{C}, \mathbb{C}))^G\wedge
H^1(K, \mathbb{C})^G\big)\oplus \big(H^1(\widetilde{C}, \mathbb{C}))^{\chi^*}\wedge H^1(K, \mathbb{C})^{\chi}\big)\\&&\oplus H^2(K, \mathbb{C})^G\\
&=& H^2(C, \mathbb{C})\oplus \big(H^1(C, \mathbb{C}))\wedge H^1(K', \mathbb{C})\big)\oplus \big(H^1(\widetilde{C}, \mathbb{C}))^{\chi^*}\wedge H^1(K, \mathbb{C})^{\chi}\big)\\&&\oplus H^2(K, \mathbb{C})^G.
\end{eqnarray*}
We also have
$$H^2(Y, \mathbb{Q})=H^2(C, \mathbb{Q})\oplus \big(H^1(C, \mathbb{Q}))\wedge H^1(K', \mathbb{Q})\big)\oplus H^2(K', \mathbb{Q}).$$
It is easy to see that $h^2(K, \mathbb{Q})^G=\dim (\wedge^2H^1(K, \mathbb{Q}))^G=h^2(K', \mathbb{Q})+1$ and,  for any non-trivial character $\psi$, $\dim H^1(\widetilde{C}, \mathbb{Q})^{\psi}=2$. Hence $h^2(M, \mathbb{Q})=\dim H^2(\widetilde{M}, \mathbb{Q})^G=h^2(Y, \mathbb{Q})+1+4=h^2(X, \mathbb{Q})$.

Since $h^2(X, \mathbb{Q})=h^2(M, \mathbb{Q})$ and both $X$ and $M$ are smooth projective varieties, the surjective morphism $\rho: X\rightarrow M$ is finite. Then so is the induced morphism on the \'etale covers $\wX\rightarrow \widetilde{M}$. Hence $a_{\wX}$ is finite onto its image. However, $\chi_{top}(\wX)=\chi_{top}(X)=0$. By \cite[Theorem 1 of the appendix]{DJL}, $\wX$ can not be of general type and neither can $X$.
  \end{proof}

\begin{prop}\label{surface}
Let $X$ be a fake torus of dimension $2$.
Then $X$ is a minimal projective surface with $\kappa(X)=1$.
Furthermore, there exists a finite abelian group $G$ acting faithfully on an elliptic curve $E$
and on a smooth projective curve $D$ of genus $\geq 3$ such that $E/G\simeq \mathbb{P}^1$, $D/G=C$ is a smooth curve of  genus $2$,
and $X$ is isomorphic to the diagonal quotient $(D\times E)/G$.
\end{prop}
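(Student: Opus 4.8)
The plan is to show that the Albanese fibration of $X$ is an isotrivial elliptic fibration over a genus-$2$ curve, and then to recognise $X$ as a quasi-bundle.

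First I record the fibration data. Since $X$ is a fake torus of dimension $2$, its Hodge numbers are those of a $2$-dimensional torus, so $q(X)=2$, $p_g(X)=1$, and $\dim A_X=2$, while $a_X$ is not generically finite. Hence $\dim a_X(X)\le 1$, and Theorem \ref{fibration-to-divisor}(1) gives $\dim a_X(X)\ge 1$; thus $s=\codim_{A_X}a_X(X)=1$. By the summary at the start of Section 6 and by Corollary \ref{cor6.1}, the Albanese image $Z_X$ is a smooth genus-$2$ curve, which I call $C$, and by Lemma \ref{pushforward} the induced map $f\colon X\to C$ is a fibration with $f_*\omega_X=\omega_C\otimes Q$ for a nontrivial torsion line bundle $Q$, whose general fibre $F$ satisfies $p_g(F)=1$ and is of maximal Albanese dimension, so that $q(F)=1$ and $F$ is an elliptic curve. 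Therefore $f$ is an elliptic fibration.

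Next I pin down the numerical invariants. From the torus Hodge and Betti numbers I compute $\chi(\mathcal{O}_X)=1-q(X)+p_g(X)=0$ and $\chi_{top}(X)=0$. Any $(-1)$-curve is rational, hence cannot be horizontal, since $C$ has genus $2$; so it would lie in a fibre, and blowing it down would make $\chi_{top}$ strictly negative, contradicting Kodaira's formula $\chi_{top}=\sum_p e(X_p)\ge 0$ on the relatively minimal model (each singular fibre having positive Euler number). Hence $X$ is minimal and relatively minimal, and $\sum_p e(X_p)=0$ forces every fibre to be a smooth elliptic curve or a multiple $m_iF_i$ of one. The canonical bundle formula $K_X=f^*(\omega_C\otimes Q)+\sum_i(m_i-1)F_i$ then yields $K_X^2=0$ while $\deg(\omega_C\otimes Q)=2>0$, so $\kappa(X)=1$.

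Now I establish isotriviality and the quotient structure. Because $f$ has no fibre of multiplicative type, the $j$-invariant is a morphism $C\to\mathbb{A}^1\subset\mathbb{P}^1$ avoiding $\infty$, hence is constant, and $f$ is isotrivial with fixed elliptic fibre $E$. Applying the structure theorem for relatively minimal isotrivial elliptic fibrations (quasi-bundles, in the sense of Serrano) I obtain a finite group $G$ acting faithfully on a smooth curve $D$ and on $E$, diagonally and freely on $D\times E$, with $C=D/G$ and $X\cong(D\times E)/G$, the multiple fibres of $f$ corresponding to the branch points of $D\to C$. From this presentation I read off the remaining assertions: since $Q=f_*\omega_X\otimes\omega_C^{-1}$ is the line bundle on $C$ associated to the $G$-character on $H^0(E,\omega_E)$, the hypothesis $Q\ne\mathcal{O}_C$ forces that character to be nontrivial, i.e. $G$ acts on $E$ by automorphisms with fixed points, so $E/G\cong\mathbb{P}^1$; the relation $D/G=C$ gives $C$ of genus $2$, and Hurwitz gives $2g(D)-2=|G|(2g(C)-2)+(\text{ramification})\ge 2|G|\ge 4$, whence $g(D)\ge 3$. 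Finally $X=(D\times E)/G$ is a free quotient of the projective surface $D\times E$, so $X$ is projective.

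The point I expect to require the most care is showing that $G$ may be taken abelian. The monodromy group $G\hookrightarrow\Aut(E)=E\rtimes\mu_r$ is a priori an extension of its automorphism part $\mu_{r'}$, with $r'\in\{2,3,4,6\}$ governing $E/G\cong\mathbb{P}^1$, by a translation part $T=G\cap E$, and this extension is non-abelian precisely when $\mu_{r'}$ acts nontrivially on $T$. To control $T$ I would pass to the cyclic étale cover $\widetilde{C}\to C$ trivialising $Q$ and the induced cover $\widetilde{X}=X\times_C\widetilde{C}$, which by Lemma \ref{pushforward} is of maximal Albanese dimension and, since $\widetilde{f}_*\omega_{\widetilde{X}}=\omega_{\widetilde{C}}$, is a translation ($E$-torsor) bundle over $\widetilde{C}$ whose monodromy is the abelian group $T$. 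Matching the resulting eigenspace decomposition of $H^1(D)$ against the prescribed fake-torus Hodge numbers ($h^{1,1}(X)=4$ and $p_g(X)=1$) then constrains $T$ to lie in the fixed locus of the automorphism part, so that $G$ becomes abelian. The low dimension keeps $r'$ and the admissible actions to a short explicit list, reducing the verification to a finite check; this enumeration is the main obstacle, and it is also what makes the diagonal-quotient description explicit.
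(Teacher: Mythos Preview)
Your argument is essentially correct and reaches the same endpoint as the paper (Serrano's structure theorem for quasi-bundles), but the route is genuinely different. The paper does not compute $\chi_{top}$, invoke Kodaira's fibre classification, or use the $j$-invariant. Instead it appeals to the preceding Theorem \ref{kodaira-dimension}: passing to the \'etale cover $\wX=X\times_C\widetilde C$, one knows $a_{\wX}$ is finite onto its image and $X$ is not of general type; hence a general fibre of $\wX\to\widetilde C$ is isogenous to the kernel of $A_{\wX}\to J\widetilde C$, which forces $f$ to be isotrivial with elliptic fibre. The fibre structure (smooth or a multiple of a smooth elliptic curve) is then read off from Kawamata's theorem rather than from $\sum_p e(X_p)=0$. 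Your approach is more self-contained and purely surface-theoretic; the paper's is shorter because it cashes in the machinery already built in Lemma \ref{pushforward} and Theorem \ref{kodaira-dimension}. For $E/G\simeq\mathbb P^1$ the paper argues more simply than you do: $q(X)=h^{1,0}(D)^G+h^{1,0}(E)^G=g(C)+g(E/G)$, and $q(X)=2=g(C)$ forces $g(E/G)=0$. Your character argument via $Q\neq\cO_C$ is correct but more circuitous.

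On the point you single out as delicate---the abelianness of $G$---you are right that this is not automatic from Serrano's theorem (finite subgroups of $\Aut(E)=E\rtimes\mu_r$ can be dihedral), and in fact the paper's own proof does not address it either: the proof ends after deducing $E/G\simeq\mathbb P^1$ from $q(X)=q(C)$. So your instinct that this step needs a separate argument is well founded; your sketch via the translation subgroup $T=G\cap E$ and the Hodge-number constraints is a reasonable line of attack, but as written it is not a proof. If you want to complete it, the cleanest way is to note that the monodromy representation of $f$ factors through $\Aut(E)$ and that the obstruction to abelianness is the conjugation action of the cyclic automorphism part on $T$; the fake-torus constraint $h^{1,1}(X)=4$ together with the free diagonal action pins down the $G$-isotypical decomposition of $H^1(D)$ enough to exclude the non-abelian cases, but this does require the finite check you anticipate.

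Two minor remarks on your write-up: in the minimality step, what you really use is that a blow-down decreases $\chi_{top}$ by $1$, so the relatively minimal model would have $\chi_{top}<0$, contradicting $\chi_{top}=12\chi(\cO_X)=0$ (Noether, with $K^2=0$ on the minimal model); your phrasing ``would make $\chi_{top}$ strictly negative'' is correct once one iterates to the minimal model. And your Hurwitz estimate $g(D)\ge 3$ silently uses $|G|\ge 2$, which is immediate from $E/G\simeq\mathbb P^1$.
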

\begin{proof}
 Let $\wX\rightarrow X$ and $\widetilde{C}\rightarrow C$ be the \'etale covers induced by $Q$ as in 2) of Lemma \ref{pushforward}.
Since $a_{\wX}$ is finite and $X$ is not of general type,  we conclude that a general fiber of $\wX\rightarrow \widetilde{C}$ is isogenous to the kernel $A_{\wX}\rightarrow J\widetilde{C}$. Hence $f: X\rightarrow C$ is isotrivial with a smooth fiber isomorphic to an elliptic curve $E$.
Moreover, by Kawamata's theorem (\cite[Theorem 15]{kaw}),  we know that a fiber of $\wX\rightarrow \widetilde{C}$ is either smooth or is a multiple of a smooth curve.
Hence both $\wX\rightarrow \widetilde{C}$
and $f: X\rightarrow C$ are quasi-bundles in the terminology of \cite{ser}.

By the main result of \cite{ser}, we conclude that there exists a Galois cover $D\rightarrow C$ with Galois group $G$ such that $D\times_CX\simeq D\times E$.
Moreover, $X\simeq (D\times E)/G$, where $G$ acts faithfully on both factors and the action on the product is the diagonal action.
Since $h^1(X, \cO_X)=2=h^1(C, \cO_C)$, we conclude that $E/G\simeq \mathbb{P}^1$.

On the other hand, any smooth surface isomorphic to  $(D\times E)/G$ with $D/G$ a smooth projective curve of genus $2$ and $E/G\simeq \mathbb{P}^1$ is a fake torus of dimension $2$.
\end{proof}

A fake torus of dimension $3$ has Kodaira dimension $1$ or $2$.
With some efforts, in both cases, we can prove a similar structural result as in the surface case.
Here is a typical example.
\begin{exam}Let $G$ be an abelian group acting faithfully on an elliptic curve $E$ by translation. Let $S$ be a smooth projective surface such that $G$ acts faithfully on $S$ and $S/G$ is a fake torus of dimension $2$. Then the diagonal quotient $(S\times E)/G$ is a fake torus in dimension $3$.
\end{exam}

When $X$ is a fake torus of dimension $4$ and $Y=Z=C_1\times C_2$ is a product of two smooth curves of genus $2$.  We know that $f_*\omega_X=\omega_Z\otimes Q$. Hence $f$ is a fibration and a general fiber $F$ of $f$ has $p_g(F)=1$. Moreover, we can verify by Koll\'ar's splitting and the Hodge diamond of $X$ that $h^0(Z, R^1f_*\omega_X)=4$ and $h^1(Z, R^1f_*\omega_X)=2$. Hence $F$ is an irregular surface. We do not know whether or not $F$ is an abelian surface. In general, for a fake torus $X$, we do not have a systematic way to study the fibers of $a_X$.

Finally we conclude by the following questions:
\begin{qu}
\item[(1)] Does there exist  fake tori of general type ?
\item[(2)] Does there exist a fake torus $X$ such that $Z_X$ is not a product of genus $2$ curves ?
\end{qu}

\end{document}